\documentclass[11pt]{article}

\usepackage{amsmath}
\usepackage{amsthm}
\usepackage{graphicx}
\usepackage{color}
\usepackage{amsfonts}
\usepackage{amssymb}
\usepackage{psfrag}
\usepackage{wrapfig}
\usepackage{amscd}
\usepackage{url}

\newcommand{\re}{\mathbb{R}}
\newcommand{\cpx}{\mathbb{C}}

\newcommand{\N}{\mathbb{N}}
\newcommand{\Q}{\mathbb{Q}}
\renewcommand{\P}{\mathbb{P}}

\newcommand{\diag}{\mbox{diag}}

\newcommand{\half}{\frac{1}{2}}
\newcommand{\lmd}{\lambda}

\newcommand{\eps}{\epsilon}

\def\af{\alpha}

\def\rank{\mbox{rank}}

\newcommand{\sig}{\sigma}

\newcommand{\reff}[1]{(\ref{#1})}
\newcommand{\pt}{\partial}
\newcommand{\prm}{\prime}

\newcommand{\mc}[1]{\mathcal{#1}}

\newcommand{\bdes}{\begin{description}}
\newcommand{\edes}{\end{description}}

\newcommand{\bal}{\begin{align}}
\newcommand{\eal}{\end{align}}

\newcommand{\bnum}{\begin{enumerate}}
\newcommand{\enum}{\end{enumerate}}

\newcommand{\bit}{\begin{itemize}}
\newcommand{\eit}{\end{itemize}}

\newcommand{\bea}{\begin{eqnarray}}
\newcommand{\eea}{\end{eqnarray}}
\newcommand{\be}{\begin{equation}}
\newcommand{\ee}{\end{equation}}

\newcommand{\baray}{\begin{array}}
\newcommand{\earay}{\end{array}}

\newcommand{\bsry}{\begin{subarray}}
\newcommand{\esry}{\end{subarray}}

\newcommand{\bca}{\begin{cases}}
\newcommand{\eca}{\end{cases}}

\newcommand{\bcen}{\begin{center}}
\newcommand{\ecen}{\end{center}}

\newcommand{\bbm}{\begin{bmatrix}}
\newcommand{\ebm}{\end{bmatrix}}

\newcommand{\bmx}{\begin{matrix}}
\newcommand{\emx}{\end{matrix}}

\newcommand{\bpm}{\begin{pmatrix}}
\newcommand{\epm}{\end{pmatrix}}

\newcommand{\btab}{\begin{tabular}}
\newcommand{\etab}{\end{tabular}}

\newtheorem{theorem}{Theorem}[section]

\newtheorem{prop}[theorem]{Proposition}
\newtheorem{lem}[theorem]{Lemma}

\newtheorem{cor}[theorem]{Corollary}

\theoremstyle{definition}

\newtheorem{exm}[theorem]{Example}


\setcounter{equation}{0}
\setcounter{subsection}{0}

\usepackage[margin=1.2in]{geometry}

\begin{document}

\title{ Discriminants and Nonnegative Polynomials
\author{Jiawang Nie\footnote{Department of Mathematics,
University of California, 9500 Gilman Drive, La Jolla, CA 92093.
Email: njw@math.ucsd.edu. The research was partially supported by NSF grants
DMS-0757212, DMS-0844775 and Hellman Foundation Fellowship.}}
\date{April 22, 2010}
}

\maketitle

\begin{abstract}
For a semialgebraic set $K$ in $\re^n$, let
$
P_d(K)=\left\{f\in \re[x]_{\leq d}: f(u) \geq 0 \,\forall\, u \in K\right\}
$
be the cone of polynomials in $x\in \re^n$ of degrees at most $d$ that are nonnegative on $K$.
This paper studies the geometry of its boundary $\pt P_d(K)$.
When $K=\re^n$ and $d$ is even, we show that its boundary $\pt P_d(K)$
lies on the irreducible hypersurface defined by the discriminant $\Delta(f)$ of $f$.
When $K=\{x\in\re^n:g_1(x)=\cdots=g_m(x)=0\}$ is a real algebraic variety,
we show that $\pt P_d(K)$ lies on the hypersurface defined
by the discriminant $\Delta(f,g_1,\ldots,g_m)$ of $f,g_1,\ldots,g_m$.
When $K$ is a general semialgebraic set, we show that $\pt P_d(K)$
lies on a union of hypersurfaces defined by the discriminantal equations.
Explicit formulae for the degrees of these hypersurfaces and discriminants are given.
We also prove that typically $P_d(K)$ does not have a barrier of type $-\log \varphi(f)$
when $\varphi(f)$ is required to be a polynomial,
but such a barrier exits if $\varphi(f)$ is allowed to be semialgebraic.
Some illustrating examples are shown.
\end{abstract}

\noindent
{\bf Key words} \, barrier, discriminants, nonnegativity, polynomials, hypersurface,
resultants, semialgebraic sets, varieties

\bigskip
\noindent
{\bf AMS subject classification} \,
14P05, 14P10, 14Q10, 90C25

\section{Introduction}

Let $K$ be a semialgebraic set in $\re^n$, and $P_d(K)$ be the cone of
multivariate polynomials in $x\in \re^n$ that are nonnegative on $K$
and have degrees at most $d$, that is,
\[
P_d(K)=\left\{f\in \re[x]_{\leq d}: f(u) \geq 0 \,\forall\, u \in K\right\}.
\]
A very natural question is what is the boundary of $P_d(K)$?
What kind of equation does it satisfy?
Can we find a nice barrier function for $P_d(K)$?
This paper discusses these issues.

A polynomial $f(x)$ in $x\in \re^n$ is said to be nonnegative or positive semidefinite (psd)
on $K$ if the evaluation $f(u)\geq 0$ for every $u\in K$.
When $K=\re^n$ and $d$ is even, an $f(x)\in P_d(\re^n)$
is called a nonnegative polynomial or psd polynomial.
When $K=\re_+^n$ is the nonnegative orthant, an $f(x) \in P_d(\re_+^n)$
is called a co-positive polynomial.
Typically, it is quite difficult to check the membership of the cone $P_d(K)$.
In case of $K=\re^n$, for any even $d >2$,
it is NP-hard to check the membership of $P_d(\re^n)$.
In case of $K=\re_+^n$, for any $d>1$,
it is NP-hard to check the membership of $P_d(\re_+^n)$.
In practical applications, people usually do not check the membership of $P_d(K)$ directly,
and instead check sufficient conditions like sum of square (SOS) type representations
(a polynomial is SOS if it is a finite summation of squares of other polynomials).
There is much work on applying SOS type certificates to approximate the cone $P_d(K)$.
We refer to \cite{Las01,NDS,Par00,ParStu,Put,Smg}.
However, there is relatively few work on studying the cone $P_d(K)$ and its boundary $\pt P_d(K)$ directly.
The geometric properties of $\pt P_d(K)$ are known very little.

When $K=\re^n$ and $d=2$, $P_2(\re^n)$ reduces to the cone of positive semidefinite matrices,
because a quadratic polynomial $f(x)$ is nonnegative everywhere
if and only if its associated symmetric matrix
$A\succeq 0$(positive semidefinite).
The boundary of $P_2(\re^n)$ consists of $f$ whose corresponding $A$
is positive semidefinite and singular, which lies on
the irreducible determinantal hypersurface $\det(A) = 0$.
Its degree is equal to the length of matrix $A$.
A typical barrier function for $P_2(\re^n)$ is $- \log \det(A).$
Note that $\det(A)$ is a polynomial in the coefficients of $f(x)$.
Do we have a similar result for $P_d(K)$ when $K \ne \re^n$ or $d > 2$?
Clearly, when $K=\re^n$ and $d>2$, we need to generalize
the definition of determinants for quadratic polynomials to higher degree polynomials.
There has been classical work in this area like \cite{GKZ}.
The ``determinants" for polynomials of degree $3$ or higher are called {\it discriminants}.
The discriminant $\Delta(f)$ of a single homogeneous polynomial (also called form) $f(x)$
is defined such that $\Delta(f)=0$ if and only if $f(x)$ has a nonzero critical point.
For a general semialgebraic set $K$, to study $\pt P_d(K)$,
we need to define the discriminant $\Delta(f_0,\ldots,f_m)$
of several polynomials $f_0,\ldots, f_m$.
As we will see in this paper, the discriminant plays a fundamental role in studying $P_d(K)$.

Recently, there are arising interests in the new area of convex algebraic geometry.
The geometry of convex (also including nonconvex) optimization problems
would be studied by using algebraic methods.
There is much work in this field, like maximum likelihood estimation \cite{CHKS},
$k$-ellipse \cite{NPS}, semidefinite programming \cite{NRS,RvB},
matrix cubes \cite{NiSt09}, polynomial optimization \cite{NR09},
statistical models and matrix completion \cite{StuUhl},
convex hulls \cite{Hen09,RaSt09,SSS}, theta bodies \cite{GPT}.
In this paper, we study the geometry of the cone $P_d(K)$
by using algebraic methods, and find its new properties.

\bigskip
\noindent
{\bf Contributions} \quad
The cone $P_d(K)$ is a semialgebraic set,
and its boundary $\pt P_d(K)$ is a hypersurface defined by
a certain polynomial equation. To study this hypersurface,
we need to define the discriminant $\Delta(f_0,\ldots,f_m)$
for several forms $f_0,\ldots, f_m$,
which satisfies $\Delta(f_0,\ldots,f_m)=0$
if and only if $f_0(x)=\cdots=f_m(x)=0$ has a nonzero singular solution.
This will be shown in Section~\ref{sec:dis-sevral}.
When $K=\re^n$ and $d>2$ is even, we prove that $\pt P_d(\re^n)$
lies on the irreducible discriminantal hypersurface $\Delta(f)=0$,
which will be shown in Section~\ref{sec:psd}.
When $K = \{x\in\re^n: g_1(x)=\cdots=g_m(x)=0\}$ is a real algebraic variety,
we show that $\pt P_d(K)$ lies on the discriminantal hypersurface $\Delta(f_0,\ldots,f_m)=0$,
which will be shown in Section~\ref{sec:real-var}.
When $K$ is a general semialgebraic set, we show that $\pt P_d(K)$
lies on a union of several discriminantal hypersurfaces,
which will be shown in Section~\ref{sec:semi-alg}.
Explicit formulae for the degrees of these hypersurfaces will also be shown.
Generally, we show that $P_d(K)$ does not have a barrier of type $-\log \varphi(f)$
when $\varphi(f)$ is required to be a polynomial,
but such a barrier exits if $\varphi(f)$ is allowed to be semialgebraic.
For the convenience of readers, we include some preliminaries about
elementary algebraic geometry, discriminants and resultants.
This will be shown in Section~\ref{sec:pre}.

\section{Some preliminaries}  \label{sec:pre}
\setcounter{equation}{0}

\subsection{Notations}
The symbol $\N$ (resp., $\re$) denotes the set of nonnegative integers (resp., real numbers),
and $\re_+^n$ denotes the nonnegative orthant of $\re^n$.
For integer $n>0$, $[n]$ denotes the set $\{1,\ldots,n\}$.
For $x \in \re^n$, $x_i$ denotes the $i$-th component of $x$,
that is, $x=(x_1,\ldots,x_n)$,
and $\tilde{x}$ denotes  $(x_0,x_1,\ldots,x_n)$.
For $\af \in \N^n$, denote $|\af| = \af_1 + \cdots + \af_n$.
For $x \in \re^n$ and $\af \in \N^n$,
$x^\af$ denotes $x_1^{\af_1}\cdots x_n^{\af_n}$.
The $[x^d]$ denotes the column vector of all monomials of degree $d$, i.e.,
$
[x^d]^T = [\, x_1^d \quad x_1^{d-1}x_2 \quad \cdots \cdots \quad x_n^d \,].
$
The symbol $\re[x] = \re[x_1,\ldots,x_n]$ (resp. $\cpx[x] = \cpx[x_1,\ldots,x_n]$)
denotes the ring of polynomials in $(x_1,\ldots,x_n)$ with real (resp. complex) coefficients;
$\re[\tilde{x}] = \re[x_0,x_1,\ldots,x_n]$ and $\cpx[\tilde{x}] = \cpx[x_0,x_1,\ldots,x_n]$
are defined similarly.
A polynomial is called a form if it is homogeneous.
The $\re[x]_d$ (resp. $\re[\tilde{x}]_d$) denotes the subspace of homogeneous polynomials
in $\re[x]$ (resp. $\re[\tilde{x}]$) of degree $d$,
and $\re[x]_{\leq d} = \re[x]_0+\re[x]_1+\cdots+\re[x]_d$.
For a polynomial $f(x)$ of degree $d$, $f^h(\tilde{x})$ denotes its homogenization $x_0^df(x/x_0)$.
For a tuple $g=(g_1,\ldots,g_m)$ of polynomials, denote $g^h=(g_1^h,\ldots,g_m^h)$.
For a finite set $S$, $|S|$ denotes its cardinality.
For a general set $S \subseteq \re^n$,
$int(S)$ denotes its interior, and $\pt S$ denotes its boundary in standard Euclidean topology.
For a matrix $A$, $A^T$ denotes its transpose.
For a symmetric matrix $X$, $X\succeq 0$ (resp., $X\succ 0$) means
$X$ is positive semidefinite (resp. positive definite).
For $u\in \re^N$, $\| u \|_2 = \sqrt{u^Tu}$ denotes the standard Euclidean norm.
%

\subsection{Ideals and varieties}

In this subsection we give a brief review about ideals and varieties
in elementary algebraic geometry.
We refer to \cite{CLO97,Ha} for more details.

A subset $I$ of $\cpx[x]$ is called an {\it ideal}
if $p \cdot q \in I$ for any $p\in \re[x]$ and $q\in I$.
For $g_1,\ldots,g_m \in \cpx[x]$, $\langle g_1,\cdots,g_m \rangle $ denotes
the smallest ideal containing every $g_i$.
The $g_1,\ldots,g_m$ are called generators of
$\langle g_1,\cdots,g_m \rangle $, or equivalently,
$\langle g_1,\ldots,g_m \rangle $ is generated by $g_1,\ldots,g_m$.
Every ideal in $\cpx[x]$ is generated by a finite number of polynomials.

An {\it algebraic variety} is a subset of $\cpx^n$ that is defined by
a finite set of polynomial equations.
Sometimes, an algebraic variety is just called a variety.
Let $g=(g_1,\ldots, g_m)$ be a tuple of polynomials in $\re[x]$. Define
\[
V(g) = \{x\in \cpx^n:  g_1(x)=\cdots=g_m(x)=0\}.
\]
In optimization, we are more interested in real solutions. Define
\[
V_\re(g) = \{x\in \re^n:  g_1(x)=\cdots=g_m(x)=0\}.
\]
It is called a {\it real algebraic variety}. Clearly, $V_\re(g) \subset V(g)$.
If $I = \langle g_1, \ldots , g_m \rangle$, we define $V(I) = V(g)$.

Given $V \subseteq \cpx^n$, the set of all polynomials vanishing on $V$ is an ideal
and denoted by
\[
I(V)  =  \{ h\in \cpx[x]\,:\, h(u)=0 \,\,\,\forall \,\, u \in V\}.
\]
Clearly, if $ V=V(I)$ and $p \in I$, then $p \in I(V)$.
The following is a reverse to this fact.

\begin{theorem} [Hilbert's Nullstellensatz] \label{HilNull}
Let $I\subset \cpx[x]$ be an ideal. If $p \in I(V)$,
then $p^k\in I$ for some integer $k>0$.
\end{theorem}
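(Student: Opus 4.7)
The plan is to prove this via the classical Rabinowitsch trick, which reduces the strong Nullstellensatz (the statement at hand) to the weak Nullstellensatz: every proper ideal $J \subsetneq \cpx[x_1,\ldots,x_n]$ has nonempty vanishing set $V(J) \neq \emptyset$. I would first invoke (or establish) this weak form, whose standard proof proceeds via Noether normalization together with the Zariski lemma, showing that every maximal ideal of $\cpx[x_1,\ldots,x_n]$ has the shape $\langle x_1-a_1,\ldots,x_n-a_n\rangle$ for some $a \in \cpx^n$.

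Assuming the weak form, here is how I would deduce the strong form. Let $I = \langle g_1,\ldots,g_m\rangle \subset \cpx[x_1,\ldots,x_n]$ and suppose $p \in I(V(I))$. Introduce a fresh variable $y$ and form the enlarged ideal
\[
J \;=\; \langle g_1,\ldots,g_m,\; 1 - y\,p(x)\rangle \;\subset\; \cpx[x_1,\ldots,x_n,y].
\]
I would check that $V(J) = \emptyset$ in $\cpx^{n+1}$: at any common zero $(a,b)$, we would have $a \in V(I)$ so $p(a)=0$ by hypothesis, contradicting $1 - b\,p(a) = 0$. By the weak Nullstellensatz, $J$ cannot be proper, so $1 \in J$, and hence there exist $h_0, h_1,\ldots,h_m \in \cpx[x_1,\ldots,x_n,y]$ with
\[
1 \;=\; \sum_{i=1}^m h_i(x,y)\, g_i(x) \;+\; h_0(x,y)\,\bigl(1 - y\,p(x)\bigr).
\]

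The next step is the substitution $y := 1/p(x)$, which is legitimate in the field $\cpx(x_1,\ldots,x_n)$ provided $p \not\equiv 0$ (the case $p \equiv 0$ being trivial). After substitution the last term vanishes, giving
\[
1 \;=\; \sum_{i=1}^m h_i\!\bigl(x,\tfrac{1}{p(x)}\bigr)\, g_i(x)
\]
as an identity in $\cpx(x_1,\ldots,x_n)$. Clearing denominators by multiplying through by a sufficiently large power $p(x)^k$ that dominates the $y$-degrees of all the $h_i$ then produces an identity
\[
p(x)^k \;=\; \sum_{i=1}^m \tilde h_i(x)\, g_i(x)
\]
in $\cpx[x_1,\ldots,x_n]$, which says exactly $p^k \in I$, as required.

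The main obstacle is really the weak Nullstellensatz itself; once it is in hand, the Rabinowitsch argument is essentially a formal manipulation. The only delicate bookkeeping step is choosing the exponent $k$ uniformly large enough to clear denominators simultaneously across all summands, but this is routine given that the $h_i$ are polynomials in $y$ of bounded degree.
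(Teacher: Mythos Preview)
Your proof via the Rabinowitsch trick is correct and is the standard argument. Note, however, that the paper does not actually supply its own proof of this theorem: Hilbert's Nullstellensatz is stated in the preliminaries section (Section~2.2) as classical background, with the reader referred to \cite{CLO97,Ha} for details. So there is no ``paper's proof'' to compare against; your write-up simply fills in what the paper takes for granted.
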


Given a subset $S \subset \cpx^n$, the smallest variety $V\subset \cpx^n$ containing $S$
is called the {\it Zariski} closure of $S$, and is denoted by $Zar(S)$.
For instance, for $S=\{x\in \re^2: x_1^2+x_2^3=1, x_1\geq 0, x_2 \geq 0\}$,
its Zariski closure is the variety $\{x\in \cpx^2: x_1^2+x_2^3=1\}$.
In the Zariski topology on $\cpx^n$, the varieties are closed sets,
and the complements of varieties are open sets.

The varieties in the above are also called {\it affine} varieties,
because they are defined in the vector space $\cpx^n$ or $\re^n$.
We also need projective varieties that are often more convenient in algebraic geometry.
Let $\P^n$ be the $n$-dimensional complex projective space,
where each point $\tilde x \in \P^n$
is a family of nonzero vectors $ \tilde{x}=(x_0,x_1, \ldots,x_n)$ that are parallel to each other.
A set $U$ in $\P^n$ is called a {\it projective variety}
if it is defined by finitely many homogeneous polynomial equations.
For given forms $p_1(\tilde{x}), \ldots, p_m(\tilde{x})$,
denote the projective variety
\[
V_\P(p_1, \ldots, p_m)= \left\{ \tilde{x} \in \P^n:\,
p_1(\tilde{x}) = \cdots = p_r(\tilde{x})=0\right\}.
\]
In particular, if $m=1$, $V_\P(p_1)$ is called a hypersurface.
Furthermore, if $p_1$ has degree one, $V_\P(p_1)$ is called a hyperplane.
In the Zariski topology on $\P^n$, the projective varieties are closed sets,
and their complements are open sets.

A variety $V$ is irreducible if there exist no
proper subvarieties $V_1,V_2$ of $V$ such that $V= V_1 \cup V_2$.
The dimension of a variety $U$ is the biggest integer $\ell$ such that
$U=U_{0}\supset U_{1}\supset \cdots \supset U_{\ell}$
where every $U_i$ is an irreducible variety.
For an ideal $I \subseteq \cpx[x]$,
its dimension is defined to be the dimension of its variety $V(I)$.
It is zero-dimensional if and only if $V(I)$ is finite.

Let $V$ be a projective variety of dimension $\ell$ in $\P^n$ and
$I(V)= \langle f_1, \ldots, f_r \rangle$.
The {\it singular locus} $V_{sing}$ is defined to be the variety
\[
V_{sing}= \left\{ w \in V: \,  \text{ rank }
J(f_1, \ldots, f_r) \, <  \, n-\ell\, \, \text{at} \, w \right\},
\]
where $J(f_1, \ldots, f_r)$ denotes the Jacobian matrix of $f_1,\ldots,f_r$.
The points in $V_{sing}$ are called singular points of $V$.
If $V_{sing} = \emptyset$, we say $V$ is smooth.
When $V$ is an affine variety, its singular locus and singular points
are defined similarly.

\subsection{Discriminants and resultants}

In this subsection, we review some basics about discriminants and resultants
for multivariate polynomials. We refer to \cite{GKZ} for more details.

Let $f(x)$ be a polynomial in $x = (x_1,\ldots,x_n)$
and $u \in \cpx^n$ be a complex zero point of $f(x)$, i.e., $f(u)=0$.
We say $u$ is a critical zero of $f$ if $\nabla_{x}f(u)=0$.
Not every polynomial has a critical complex zero.
In the univariate case, if $f(x)=ax^2+bx+c$ is quadratic
and has a critical complex zero, then its discriminant $b^2-4ac = 0$.
In the multivariate case, if $f(x)=x^TAx$ is quadratic and $A$ is symmetric, then
$f(x)$ has a nonzero complex critical point if and only if its determinant $\det (A) = 0$.
The above can be generalized to polynomials of higher degrees.
In \cite{GKZ}, the discriminants have been defined for general multivariate polynomials.

For convenience, let $f(x)$ be a form in $x = (x_1,\ldots,x_n)$.
The discriminant $\Delta(f)$ is a polynomial in the coefficients of $f$ satisfying
\[
\Delta(f) = 0 \quad  \Longleftrightarrow \quad
\exists \, u\in \cpx^n\backslash \{0\}:
\nabla f(u)=0.
\]
The discriminant $\Delta(f)$ is homogeneous,
irreducible and has integer coefficients.
It is unique up to a sign if all its integer coefficients are coprime.
When $\deg(f)=d$, $\Delta(f)$ has degree $n(d-1)^{n-1}$.
For instance, when $n=2$ and $d=3$, we have the formula (see \cite[Chap.~12]{GKZ})
\[
\Delta(ax_1^3+bx_1^2x_2+cx_1x_2^2+dx_2^3) =
b^2c^2-4ac^3-4b^3d+18abcd-27a^2d^2.
\]

A more general definition than discriminant is {\it resultant}.
Let $f_1,\ldots, f_n$ be forms in $x \in \re^n$.
The resultant $Res(f_1,\ldots, f_n)$ is a polynomial
in the coefficients of $f_1, \ldots,f_n$ satisfying
\[
Res(f_1,\ldots,f_n) = 0 \quad  \Longleftrightarrow \quad
\exists \, u\in \cpx^n\backslash \{0\}:
f_1(u)=\cdots = f_n(u)=0.
\]
The polynomial $Res(f_1,\ldots, f_n)$ is homogeneous, irreducible and has integer coefficients.
It is unique up to a sign if all its coefficients are coprime.
If $f_i$ has degree $d_i$, then $Res(f_1,\ldots, f_n)$ is homogeneous in every $f_k$
of degree $d_1\cdots d_{k-1}d_{k+1}\cdots d_n$, and its total degree is
\[
d_1\cdots d_n \left( d_1^{-1} + \cdots + d_n^{-1}\right).
\]
In case of $n=2$, a general formula for $Res(f_1,\ldots, f_n)$ is given in \cite[Sec.~4.1]{Stu02}.
For instance, if $f_1(x) =ax_1^2+bx_1x_2+cx_2^2$ and $f_2(x) = dx_1^2+ex_1x_2+fx_2^2$, then
\[
Res(f_1,f_2) = c^2d^2-bcde+ace^2+b^2df-2acdf-abef+a^2f^2.
\]

We would like to to remark that the discriminant is a specialization of resultant.
A form $f(x)$ has a nonzero complex critical point if and only if
\[
\frac{\pt f(x)}{\pt x_1} = \cdots = \frac{\pt f(x)}{\pt x_n} =0
\]
has a nonzero complex solution.
So $\Delta(f) = \eta \cdot Res(\frac{\pt f}{\pt x_1}, \ldots, \frac{\pt f}{\pt x_n})$
for a scalar $\eta \ne 0$.

In many situations, we often handle nonhomogeneous polynomials.
The discriminants and resultants would also be defined for them.
Let $f(x)$ be a general polynomial in $x = (x_1,\ldots,x_n)$,
and the form $f^h(\tilde{x})$ in $\tilde{x}=(x_0,x_1,\ldots,x_n)$ be its homogenization.
The discriminant $\Delta(f)$ of $f(x)$ is then defined to be $\Delta(f^h)$ .
Observe that if $u\in \cpx^n$ is a critical zero point of $f$, i.e., $f(u)=0$ and $\nabla_x f(u)=0$,
then we must have $\nabla_{\tilde{x}} f^h(\tilde{u})=0$. Here $\tilde{u}=(1,u_1,\ldots,u_n)$.
To see this point, recall the Euler's formula (suppose $\deg(f)=d$)
\be \label{fm:euler}
d \cdot f^h(\tilde{x}) =  x_0 \frac{\pt f^h(\tilde{x})}{\pt x_0} +
x_1 \frac{\pt f^h(\tilde{x})}{\pt x_1} + \cdots + x_n \frac{\pt f^h(\tilde{x})}{\pt x_n}.
\ee
Since $f^h(\tilde{u}) = f(u),
\frac{\pt f^h(\tilde{u})}{\pt x_1} = \frac{\pt f(u)}{\pt x_1}, \ldots,
\frac{\pt f^h(\tilde{u})}{\pt x_n} = \frac{\pt f(u)}{\pt x_n},
$ it holds that $\nabla_{\tilde{x}} f^h(\tilde{u})=0$.
It is possible that $\Delta(f)=0$ while $f$ does not have a critical zero point,
because $\nabla_{\tilde{x}} f^h(\tilde{x})=0$ might have a solution at infinity $x_0=0$.

The resultants are similarly defined for nonhomogeneous polynomials.
Let $f_0,f_1,\ldots, f_n$ be general polynomials in $x = (x_1,\ldots,x_n)$.
The resultant $Res(f_0,f_1,\ldots, f_n)$ is then defined to be
$Res(f_0^h,f_1^h,\ldots,f_n^h)$.
Here each form $f_i^h(\tilde{x})$ is the homogenization of $f_i(x)$.
Clearly, if the polynomial system
\[
f_0(x)=f_1(x)=\cdots=f_n(x)=0
\]
has a solution in $\cpx^n$, then the homogeneous system
\[
f_0^h(\tilde{x})=f_1^h(\tilde{x})=\cdots=f_n^h(\tilde{x})=0
\]
has a solution in $\P^n$.
The reverse is not always true, because the latter might have a solution at infinity $x_0=0$.

There are systemic procedures to compute resultants (hence including discriminants)
for general polynomials.
We refer to \cite[Chap.~3]{CLO98}, \cite[Sec.~4, Chap.~3]{GKZ}, and \cite[Chap.~4]{Stu02}.

\section{Discriminants of several polynomials} \label{sec:dis-sevral}
\setcounter{equation}{0}

In this section, we assume $f_0(\tilde{x}),f_1(\tilde{x}),\ldots, f_m(\tilde{x})$
are forms in $\tilde{x}=(x_0,x_1,\ldots,x_n)$ of degrees $d_0,d_1,\ldots,d_m$ respectively, and $m \leq n$.
Denote $f=(f_0,f_1,\ldots,f_m)$.
If every $f_i$ has generic coefficients, the polynomial system
\be \label{plsys:f0m}
f_0(\tilde{x}) = \cdots = f_m(\tilde{x}) = 0
\ee
has no singular solution in $\P^n$,
that is, for any $\tilde{u} \in \P^n$ satisfying \reff{plsys:f0m}, the Jacobian
\[
J_f (\tilde{u}):=\bbm \nabla_{\tilde{x}} f_0(\tilde{u}) & \nabla_{\tilde{x}} f_1(\tilde{u}) &  \cdots
& \nabla_{\tilde{x}} f_m(\tilde{u}) \ebm
\]
has full rank. For some particular $f$, \reff{plsys:f0m} might have a singular solution. Define
\[
W(d_0,\ldots,d_m) =
\left\{(f_0,\ldots,f_m) \in \prod_{i=0}^m \re[\tilde{x}]_{d_i}:
\baray{c}
\exists \tilde{u} \in \P^n \quad s.t. \\
f_0(\tilde{u})=\cdots=f_m(\tilde{u})=0 \\
 \rank J_f(\tilde{u}) \leq m
\earay
\right\}.
\]
When every $d_i=1$, $W(1,\ldots,1)$ consists of all vector tuples $(f_0,\ldots,f_m)$
such that $f_0,\ldots,f_m$ are linearly dependent. Thus
$W(1,\ldots,1)$ consists of all $(n+1)\times (m+1)$ matrices whose ranks are at most $m$,
which is a determinantal variety of codimension $n+1-m$.
It is not a hypersurface when $m \leq n-1$.
When every $d_i=d>1$, $W(d,\ldots,d)$ consists of all tuples
$(f_0,\ldots,f_m)$ such that the multi-homogeneous form in $(\tilde{x}, \tilde{\lmd})$
(here $\tilde{\lmd}=(\lmd_0,\lmd_1,\ldots,\lmd_m)$)
\[
\mc{L}(\tilde{x},\tilde{\lmd}):=
\lmd_0 f_0(\tilde{x}) + \lmd_1 f_1(\tilde{x}) + \cdots + \lmd_m f_m(\tilde{x})
\]
has a critical point in the product of projective spaces $\P^n \times \P^m$.
As is known, the multi-homogeneous form $\mc{L}(\tilde{x},\tilde{\lmd})$
has a critical point in $\P^n \times \P^m$ if and only if its discriminant
vanishes (see \cite[Section~2B, Chap. 13]{GKZ}).
So $W(d,\ldots,d)$ is a hypersurface.
When the $d_i$'s are not equal and at least one $d_i>1$,
$W(d_0,\ldots,d_m)$ is also a hypersurface,
which is a consequence of Theorem~4.8 of Looijenga \cite{Loo}.
This fact was kindly pointed out to the author by Kristian Ranestad.
So we assume at least one $d_i >1$, and then $W(d_0,\ldots,d_m)$ is a hypersurface.
Let $\Delta(f_0,f_1,\ldots,f_m)$ be a defining polynomial of the lowest degree for $W(d_0,\ldots,d_m)$.
It is unique up to a constant factor and satisfies
\be \label{dis:f0m}
(f_0,\ldots,f_m) \in W(d_0,\ldots,d_m) \quad \Longleftrightarrow \quad
\Delta(f_0,f_1,\ldots,f_m) = 0.
\ee
For convenience, we also call $\Delta(f_0,f_1,\ldots,f_m)$ the discriminant of
forms $f_0(\tilde{x}),\ldots, f_m(\tilde{x})$.
When $m=0$, $\Delta(f_0,f_1,\ldots,f_m)$ becomes the standard discriminant of a single form,
which has degree $(n+1)(d_0-1)^n$. So $\Delta(f_0,f_1,\ldots,f_m)$
can be thought of as a generalization of $\Delta(f_0)$.
In the rest of this section, we are going to prove
a general degree formula for $\Delta(f_0,f_1,\ldots,f_m)$.

For every integer $k\geq 0$, denote by $S_k$ the $k$-th complete symmetric polynomial
\[
S_k(a_1,\ldots,a_t) = \sum_{i_1+\cdots+i_t=k}
a_1^{i_1} \cdots a_t^{i_t}.
\]
Let $H(\tilde{x}) \in \re[\tilde{x}]^{(n+1) \times (m+1)}$ be a matrix polynomial
such that its every entry $H_{ij}(\tilde{x})$ is homogeneous
and all the entries of its every column have the same degree. Define
\be  \label{DetVar:H(x)}
\mc{D}_m(H) = \{ \tilde{x} \in \P^n: \mbox{rank} \, H(\tilde{x}) \leq m \}.
\ee

\begin{theorem}  \label{thm-deg:dis-itsc}
Suppose every $d_i>0$, at least one $d_i>1$, and $m \leq n$.
Then the discriminant $\Delta(f_0,\ldots,f_m)$ has the following properties:
\bit

\item [a)] For every $k=0,\ldots,m$, $\Delta(f_0,f_1,\ldots,f_m)$
is homogeneous in $f_k$. It also holds that
\[
\Delta(f_0,\ldots,f_m) = 0 \quad \mbox{ whenever } f_i = f_j \, \mbox{ for } i \ne j.
\]

\item [b)] For every $k=0,\ldots,m$,
the degree of $\Delta(f_0,f_1,\ldots,f_m)$ in $f_k$ is
\be \label{deg:DisSigVar}
d_0\cdots \check{d_k} \cdots d_m  \cdot
S_{n-m}\Big(d_0-1, \ldots, \widehat{\widehat{d_k-1}}, \ldots, d_m-1\Big).
\ee
In the above, $\check{d_k}$ means $d_k$ is missing,
and $\widehat{\widehat{a}}$ means $a$ is repeated twice.
Thus the total degree of $\Delta(f_0,f_1,\ldots,f_m)$ is
\be \label{tdeg:DisSigVar}
d_0 \cdots d_m
\left( \sum_{k=0}^m \frac{1}{d_k}
S_{n-m}\Big(d_0-1, \ldots, \widehat{\widehat{d_k-1}}, \ldots, d_m-1\Big) \right).
\ee

\item [c)] For fixed $f_1,\ldots, f_m$,  $\Delta(f_0,f_1,\ldots,f_m)$
is identically zero in $f_0$ if and only if the projective variety $V_\P(f_1,\ldots,f_m)$
has a positive dimensional singular locus.

\eit
\end{theorem}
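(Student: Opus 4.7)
For part (a), I would argue that the condition defining $W(d_0,\ldots,d_m)$ is invariant under rescaling each $f_k$: replacing $f_k$ by $\alpha f_k$ ($\alpha\ne0$) changes neither the common zero set $V_\P(f_0,\ldots,f_m)$ nor the column span of $J_f(\tilde u)$ at any point. Hence the lowest-degree defining polynomial $\Delta$ must be homogeneous in each $f_k$. For the second assertion, if $f_i=f_j$ with $i\ne j$, the system $f_0=\cdots=f_m=0$ is effectively only $m$ distinct equations in $\P^n$, so by the projective dimension theorem it has a nonempty common zero set; at any such $\tilde u$ the matrix $J_f(\tilde u)$ has two identical columns, so $\mathrm{rank}\,J_f(\tilde u)\le m$, and therefore $(f_0,\ldots,f_m)\in W$, giving $\Delta=0$.

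For part (b), the plan is to compute $\deg_{f_k}\Delta$ as the intersection number of $W$ with a generic pencil in the $f_k$-slot. I fix $f_0,\ldots,\widehat{f_k},\ldots,f_m$ generic so that $V^{(k)}:=V_\P(f_i:i\ne k)$ is a smooth complete intersection of dimension $n-m$ and degree $\prod_{i\ne k}d_i$ in $\P^n$, and take a generic pencil $f_k(t)=f_k^{(0)}+tf_k^{(1)}$. On the smooth locus of $V^{(k)}$ the columns $\{\nabla f_i:i\ne k\}$ span the conormal bundle, so the rank condition $\mathrm{rank}\,J_f(\tilde u,t)\le m$ reduces to $\nabla f_k(\tilde u,t)$ being conormal to $V^{(k)}$ at $\tilde u$, i.e.\ $\tilde u$ is a critical zero of $f_k(t)|_{V^{(k)}}$. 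Thus $\deg_{f_k}\Delta$ equals the number of singular members of the pencil $\{V_\P(f_k(t))\cap V^{(k)}\}_{t\in\P^1}$ on $V^{(k)}$. I would then apply the Thom--Porteous formula (cf.\ Proposition~A.6 of \cite{NR09}) to the $(n+1)\times(m+2)$ polynomial matrix
\[
M(\tilde x)=\bbm \nabla f_0 & \cdots & \nabla f_{k-1} & \nabla f_k^{(0)} & \nabla f_k^{(1)} & \nabla f_{k+1} & \cdots & \nabla f_m \ebm,
\]
whose column-degree multiset is $(d_0-1,\ldots,d_{k-1}-1,d_k-1,d_k-1,d_{k+1}-1,\ldots,d_m-1)$: the degeneracy locus $\{\mathrm{rank}\,M\le m+1\}$ has class $S_{n-m}(d_0-1,\ldots,\widehat{\widehat{d_k-1}},\ldots,d_m-1)\cdot H^{n-m}$ in $\P^n$, and intersecting transversely with $[V^{(k)}]=\prod_{i\ne k}d_i\cdot H^m$ yields formula (\ref{deg:DisSigVar}). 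The total degree (\ref{tdeg:DisSigVar}) follows by summing over $k$ and invoking multihomogeneity.

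For part (c), if the singular locus $\Sigma$ of $V_\P(f_1,\ldots,f_m)$ has positive dimension then for any $f_0$ the projective dimension theorem gives $V_\P(f_0)\cap\Sigma\ne\emptyset$; any such intersection point is a common zero at which $[\nabla f_1\,\cdots\,\nabla f_m]$ already has rank less than $m$, so the full Jacobian has rank at most $m$, forcing $\Delta(f_0,\ldots,f_m)=0$ for every $f_0$. Conversely, if $\Sigma$ is at most zero-dimensional, then a generic $f_0$ avoids the finitely many points of $\Sigma$, and Bertini's theorem ensures that $V_\P(f_0)$ meets the smooth locus of $V_\P(f_1,\ldots,f_m)$ transversally; the full system then has no singular common zero, so $\Delta\ne0$, and $\Delta$ is not identically zero in $f_0$.

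The main obstacle lies in part (b): justifying that the incidence between singular-zero pairs $(\tilde u,t)$ and the Porteous degeneracy locus of $M$ really realises the intersection number $\deg_{f_k}\Delta$---ruling out excess intersection and spurious multiplicities---requires careful Bertini-type genericity arguments on $(f_0,\ldots,\widehat{f_k},\ldots,f_m)$ and on the pencil. Once these transversality statements are in hand, the degree formula reduces to a routine Chern-class/Schur computation.
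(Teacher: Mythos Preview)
Your overall strategy matches the paper's proof in all three parts: for (a) the homogeneity-by-rescaling and repeated-column arguments are identical; for (b) the paper sets up exactly your $(n+1)\times(m+2)$ matrix $M$ (with a generic form $h$ playing the role of your $f_k^{(1)}$), intersects $\mathcal{D}_{m+1}(M)$ with $V^{(k)}=V_\P(f_i:i\ne k)$, and reads off the degree via B\'ezout and Proposition~A.6 of \cite{NR09}; for (c) the paper builds an explicit linear $f_0=a^T\tilde x$ with $a$ chosen outside the union of conormal spaces, but your Bertini argument works just as well.

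The one place to sharpen your plan is what you call ``the main obstacle.'' The paper resolves it not by Bertini-type transversality but by a short Euler's-formula lemma. Two points are at stake. First, your sentence ``the rank condition \ldots\ reduces to \ldots\ $\tilde u$ is a critical \emph{zero}'' is not automatic: the rank condition alone gives only $\nabla f_k(t)(\tilde u)\in\mathrm{span}\{\nabla f_i(\tilde u):i\ne k\}$, not $f_k(t)(\tilde u)=0$. Euler's formula supplies this: dotting $\tilde u$ into the linear dependence relation yields $d_k f_k(t)(\tilde u)=\sum_{i\ne k}\mu_i d_i f_i(\tilde u)=0$. Second, to equate the number of parameter values $t$ with the cardinality of $\mathcal{D}_{m+1}(M)\cap V^{(k)}$ you need each $\tilde u$ to determine $t$ uniquely; the paper checks this directly (via $t=-f_k^{(0)}(\tilde u)/f_k^{(1)}(\tilde u)$ when $f_k^{(1)}(\tilde u)\ne0$, and a $2\times2$ determinant argument otherwise). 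With this lemma in hand no separate multiplicity or excess-intersection analysis is required.
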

\begin{proof}
a) Note that for any scalar $\af \ne 0$, $(f_0, \ldots, f_m) \in W(d_0,\ldots,d_m)$ if and only if
\[
(f_0, \ldots, f_{k-1}, \af f_k, f_{k+1}, \ldots, f_m) \in W(d_0,\ldots,d_m).
\]
So, by relation \reff{dis:f0m}, $\Delta(f_0,\ldots,f_m)$ must be homogeneous in every $f_k$.

If $f_i = f_j$ for some distinct $i,j$, say $i=0,j=1$, then
$(f_0, \ldots, f_m) \in W(d_0,\ldots,d_m)$
because the polynomial system \reff{plsys:f0m} must have a solution in $\P^n$
(it has only $m-1<n$ distinct equations) and its Jacobian is singular
(its first two columns are same).

b) For convenience, we only prove the degree formula for $k=0$.
Choose generic forms $f_0,\ldots, f_m$ of degrees $d_0,\ldots,d_m$ respectively,
and another generic form $h$ of degree $d_0$.
Then the degree of $\Delta(f_0,f_1,\ldots,f_m)$ in $f_0$
is equal to the number of scalars $\gamma$ such that
\be  \label{dis:f+gma*h}
\Delta(f_0+\gamma h,f_1,\ldots,f_m) = 0.
\ee
Since the $f_i$'s are generic, $\Delta(f_1,\ldots,f_m) \ne 0$ and hence
$V_\P(f_1,\ldots,f_m)$ is nonsingular.
\begin{lem} \label{deg:gama}
The condition \reff{dis:f+gma*h} is equivalent to
\be  \label{sigJac:f+h}
\baray{c}
\exists \, u \in \P^n,\, \exists\, \gamma \in \cpx: \quad  f_1(u)=\cdots =f_m(u)=0, \\
\mbox{rank} \bbm \nabla_{\tilde{x}} f_0(u) +\gamma \nabla_{\tilde{x}} h(u)
& \nabla_{\tilde{x}} f_1(u) & \cdots & \nabla_{\tilde{x}} f_m(u) \ebm \leq m.
\earay
\ee
Furthermore, every $u$ satisfying \reff{sigJac:f+h} determines $\gamma$ uniquely.
\end{lem}
\noindent {\it Proof.}
By relation \reff{dis:f0m},  \reff{dis:f+gma*h} clearly implies \reff{sigJac:f+h}.
So we only prove the reverse.
Suppose \reff{sigJac:f+h} is satisfied by some $u$ and $\gamma$.
The rank condition in \reff{sigJac:f+h} implies there exists
$(\mu_0, \mu_1, \ldots, \mu_m) \ne 0$ satisfying
\[
\mu_0 \big(\nabla_{\tilde{x}} f_0(u) +\gamma \nabla_{\tilde{x}} h(u)\big)+
\mu_1 \nabla_{\tilde{x}} f_1(u) +\cdots + \mu_m \nabla_{\tilde{x}} f_m(u) = 0.
\]
Since $V_\P(f_1,\ldots,f_m)$ is nonsingular, we must have $\mu_0 \ne 0$ and can scale $\mu_0=1$.
By Euler's formula \reff{fm:euler}, premultiplying $u^T$ in the above equation gives
\[
d_0(f_0(u) +\gamma h(u)) + \mu_1 d_1f_1(u) +\cdots + \mu_m  d_m f_m(u) = 0.
\]
Thus the equations in \reff{sigJac:f+h} imply
$
f(u) + \gamma h(u) = 0,
$
and thus \reff{dis:f+gma*h} holds by relation \reff{dis:f0m}.

Now we prove each $u$ in \reff{sigJac:f+h} uniquely determines $\gamma$.
If $h(u) \ne 0$, we know $\gamma = - f(u)/h(u)$ from the above.
If $h(u) = 0$, because $V_\P(h,f_1,\ldots,f_m)$ is nonsingular ($h$ and $f_i$ are all generic),
we can generally assume the first $m+1$ rows of the Jacobian of
$h, f_1, \ldots, f_m$ at $u$ are linearly independent,
which is denoted by $\bbm b & F\ebm$ with $b\in \cpx^{m+1}$ and $F\in \cpx^{(m+1)\times m}$.
Denote by $a$ the first $m+1$ entries of $ \nabla_{\tilde{x}} f_0(u)$.
Then, $\det \bbm  b & F \ebm \ne 0$ and \reff{sigJac:f+h} implies
\[
\det \bbm  a+\gamma b & F \ebm =
\det \bbm  a  & F \ebm  + \gamma \det \bbm  b & F \ebm =0.
\]
So $\gamma = - \det \bbm  a  & F \ebm / \det \bbm  b & F \ebm$.
There is a unique $\gamma$ for every $u$ in \reff{sigJac:f+h}.
\hfill
\qed

Clearly, \reff{sigJac:f+h} is also equivalent to
\[
\baray{c}
\exists u \in \P^n: \quad  f_1(u)=\cdots =f_m(u)=0, \\
\rank \bbm  \nabla_{\tilde{x}} f_0(u) & \nabla_{\tilde{x}} h(u)
& \nabla_{\tilde{x}} f_1(u) & \cdots & \nabla_{\tilde{x}} f_m(u) \ebm \leq m+1.
\earay
\]
Let $J$ be the Jacobian matrix in the above. By Lemma~\ref{deg:gama},
the degree of $\Delta(f_0,f_1,\ldots,f_m)$ in $f_0$ is equal to the cardinality of
\[
U:=\mc{D}_{m+1}(J) \cap V_\P(f_1,\ldots,f_m).
\]
The variety $V_\P(f_1,\ldots,f_m)$ is smooth, has codimension $m$ and degree $d_1\cdots d_m$.
Since every $f_i$ and $h$ are generic, $\mc{D}_{m+1}(J)$
is also smooth, has dimension $m$ and intersects $V_\P(f_1,\ldots,f_m)$ transversely.
So $U$ is a finite variety.
We refer to Proposition~2.1 and Theorem~2.2 in \cite{NR09} for more details about this fact.
The degree of the determinantal variety $\mc{D}_{m+1}(J)$ is (cf. Proposition A.6 of \cite{NR09})
\[
S_{n-m}(d_0-1, d_0-1, d_1-1, \ldots, d_m-1).
\]
By B\'{e}zout's theorem (cf. Proposition A.3 of \cite{NR09}, or \cite{Ha}),
the degree of $U$ is given by the formula \reff{deg:DisSigVar},
which also equals its cardinality.
Therefore, the degree of $\Delta(f_0,f_1,\ldots,f_m)$ in $f_0$
is given by \reff{deg:DisSigVar},
and then the formula for its total degree immediately follows.

c) Clearly, if the singular locus $V_\P(f_1,\ldots,f_m)_{sing}$ has positive dimension,
then it must intersect the hypersurface
$f_0(\tilde{x}) =0$ for arbitrary $f_0$, by B\'{e}zout's theorem.
Thus the system \reff{plsys:f0m} has a singular solution,
which implies $\Delta(f_0,f_1,\ldots,f_m)=0$ for arbitrary $f_0$.
To prove the reverse, suppose $\Delta(f_0,f_1,\ldots,f_m)=0$ is identically zero in $f_0$.
We need to show that $V_\P(f_1,\ldots,f_m)_{sing}$ has positive dimension.
For a contradiction, suppose it is zero dimensional
and consists of finitely many points $u^{(1)},\ldots, u^{(N)} \in \P^n$.
Note that the dimension of the set
\[
T = \bigcup_{v\in V_\P(f_1,\ldots,f_m)} \mathcal{R} \big(
\bbm \nabla_{\tilde{x}} f_1(v) & \cdots & \nabla_{\tilde{x}} f_m(v) \ebm \big)
\]
is at most $n$ in the affine space $\re^{n+1}$ whose dimension is $n+1$.
Here $\mathcal{R} \big(A\big)$ denotes the column range space of matrix $A$.
So the complement $\re^{n+1}\backslash T$ has positive dimension,
and hence we can choose $a \in \re^{n+1}\backslash T$
such that the hyperplane $a^T\tilde{x} = 0$ does not pass through $u^{(1)},\ldots, u^{(N)}$.
For $f_a(\tilde{x}) = a^T\tilde{x}$, the homogeneous polynomial system
\[
f_a(\tilde{x})=f_1(\tilde{x})=\cdots = f_m(\tilde{x})=0
\]
has no singular solution in $\P^n$,
which implies $\Delta(f_a,f_1,\ldots,f_m) \ne 0$ by \reff{dis:f0m} and then contradicts that
$\Delta(f_0,f_1,\ldots,f_m)=0$ is identically zero in $f_0$.
So, the singular locus of $V_\P(f_1,\ldots,f_m)$ must have positive dimension.
\end{proof}

The discriminant $\Delta(f_0,\ldots,f_m)$ of $m+1$ forms
$f_0(\tilde{x}), \ldots, f_m(\tilde{x})$
is a natural generalization of the standard discriminant of a single form.
In formula \reff{tdeg:DisSigVar}, if we set $m=0$, then the degree of $\Delta(f_0)$ is
$(n+1) (d_0-1)^n$, which is precisely the degree of discriminants of forms of degree $d_0$ in $n+1$ variables.

In Theorem~\ref{thm-deg:dis-itsc}, if every $d_i=d$,
the discriminant $\Delta(f_0,\ldots,f_m)$ is homogeneous in every $f_i$ of degree
$
\binom{n+1}{m+1} d^m (d-1)^{n-m},
$
and its total degree is
$
(n+1)\binom{n}{m} d^m (d-1)^{n-m}.
$
This is precisely the degree of the discriminant
of the multi-homogeneous form $\mc{L}(\tilde{x},\tilde{\lmd})$
(see Theorem 2.4 of Section 2B in Chapter 13 of \cite{GKZ}).

In Theorem~\ref{thm-deg:dis-itsc}, when $m=n$,
the Jacobian of \reff{plsys:f0m} must be singular at its every solution $\tilde{u}\in \P^n$,
because by Euler's formula \reff{fm:euler}
\[
\tilde{u}^T \bbm \nabla_{\tilde{x}} f_0(\tilde{u}) & \cdots & \nabla_{\tilde{x}} f_n(\tilde{u}) \ebm
= \bbm d_0 f_0(\tilde{u}) & \cdots & d_n f_n(\tilde{u}) \ebm =0.
\]
So \reff{plsys:f0m} has a singular solution if and only if the homogeneous polynomial system
\[
f_0(\tilde{x}) = \cdots = f_n(\tilde{x}) = 0
\]
has a solution in $\P^n$,
which is equivalent to that the resultant $Res(f_0, \ldots, f_n)$ vanishes. So
\[
\Delta(f_0,\ldots,f_n) \, = \, 0 \quad \Longleftrightarrow \quad
Res(f_0, \ldots, f_n) \,= \, 0.
\]
Observe that $\Delta(f_0,\ldots,f_n)$ and $Res(f_0, \ldots, f_n)$ have the same degree
\[
d_0 \cdots d_n
\left( d_0^{-1} + \cdots + d_n^{-1} \right).
\]
So $\Delta(f_0,\ldots,f_n)$ is equal to $Res(f_0, \ldots, f_n)$ up to a constant factor.

When $d_0>1$ and every $f_i(\tilde{x})=f_i^T\tilde{x} \,(1\leq i\leq m)$ is linear,
\reff{plsys:f0m} has a singular solution if and only if
$f_0(\tilde{x})$ has a nonzero critical point in the orthogonal complement
of the subspace $\mbox{span}\{f_1,\ldots,f_m\}$.
If every $f_i(\tilde{x})=x_{i-1}$, $\Delta(f_0, x_0,\ldots,x_{m-1})$
vanishes if and only if $\Delta(\hat{f})=0$.
Here $\hat{f} = f(0,\ldots,0,x_m,\ldots,x_n)$ is a form in $(x_m,\ldots,x_n)$.
Since $\Delta(f_0, x_0,\ldots,x_{m-1})$ has degree
$(n-m+1)(d_0-1)^{n-m}$ in $f_0$, we have
\be \label{copoly:dis}
\Delta(f_0, x_0,\ldots,x_{m-1}) = \eta \cdot \Delta(\hat{f})
\ee
for some scalar $\eta \ne 0$.
Furthermore, if $f_0=\tilde{x}^TA\tilde{x}$ is quadratic, then it holds that
\be \label{quad:dis=det}
\Delta(\tilde{x}^TA\tilde{x}, x_0,\ldots,x_{m-1})
= \eta \cdot \det A(m+1:n+1, m+1:n+1).
\ee
Here $A(I,I)$ denotes the submatrix of $A$
whose row and column indices are from $I$.

We conclude this section by generalizing $\Delta(f_0,\ldots,f_m)$ to
nonhomogeneous polynomials. If $f_0,\ldots,f_m$ are not forms,
denote by $f_i^h$ the homogenization of $f_i$.
Then $\Delta(f_0,\ldots,f_m)$ is defined to be $\Delta(f_0^h,\ldots,f_m^h)$.

\section{Polynomials nonnegative on $\re^n$} \label{sec:psd}
\setcounter{equation}{0}

This section studies the cone $P_d(K)$ when $K=\re^n$.
Note that a polynomial $f(x)$ is nonnegative in $\re^n$ if and only if
its homogenization $f^h(\tilde{x})$ is nonnegative everywhere.
So we just consider the cone of nonnegative forms.

Let $P_{n,d}$ be the cone of forms nonnegative in $\re^n$ of degree $d$.
Here $d>0$ is even.
Clearly, a form $f$ lies in the interior of $P_{n,d}$ if and only if it is positive definite,
that is, $f(x) > 0$ for every $x\ne 0$.
If $f(x)$ lies on the boundary $\pt P_{n,d}$, then it vanishes at some $0\ne u \in\re^n$.
Since $f(x)$ is nonnegative everywhere, $u$ must be a minimizer of $f(x)$ and $\nabla f(u)=0$.
This implies that $f(x)$ has a nonzero critical point, and hence its discriminant $\Delta(f) = 0.$
So the boundary $\pt P_{n,d}$ lies on the discriminantal hypersurface
\[
\mc{E}_{n,d} = \{f \in \re[x]_d: \Delta(f) = 0 \}.
\]

\begin{theorem}
The Zariski closure of the boundary $\pt P_{n,d}$ is $\mc{E}_{n,d}$,
which is an irreducible hpyersurface of degree $n(d-1)^{n-1}$.
\end{theorem}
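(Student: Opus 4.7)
The theorem bundles three claims: (i) $\pt P_{n,d}$ is Zariski-dense in $\mc{E}_{n,d}$, (ii) $\mc{E}_{n,d}$ is an irreducible hypersurface, and (iii) it has degree $n(d-1)^{n-1}$. The inclusion $\pt P_{n,d}\subseteq \mc{E}_{n,d}$ is already established in the discussion immediately preceding the theorem. Since $\mc{E}_{n,d}$ is cut out by the single polynomial $\Delta(f)$, which Section~\ref{sec:pre} records as irreducible with integer coefficients and of degree $n(d-1)^{n-1}$, claims (ii) and (iii) are immediate. So the real task is (i).

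For (i), my plan is a dimension-count argument. First I would verify that $P_{n,d}$ has nonempty Euclidean interior in $\re[x]_d$: the form $x_1^d+\cdots+x_n^d$ is strictly positive on $\re^n\setminus\{0\}$ since $d$ is even, and by compactness of the unit sphere any form sufficiently close in coefficient norm remains strictly positive there, hence off the origin by homogeneity. So $P_{n,d}$ is a full-dimensional closed convex cone, and its topological boundary $\pt P_{n,d}$ has real dimension $N-1$, where $N=\dim_\re \re[x]_d$.

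Next, let $Y$ denote the complex Zariski closure of $\pt P_{n,d}$ in $\cpx[x]_d$. Since $\pt P_{n,d}\subseteq \re[x]_d$, its vanishing ideal is stable under complex conjugation, so $Y$ is defined over $\re$. We already know $Y\subseteq \mc{E}_{n,d}$. Suppose for contradiction that $Y$ is a proper subvariety; since $\mc{E}_{n,d}$ is irreducible of complex codimension $1$, every irreducible component of $Y$ must then have complex dimension at most $N-2$. The standard inequality $\dim_\re Z(\re)\leq \dim_\cpx Z$ for a complex variety $Z$ defined over $\re$ yields $\dim_\re Y(\re)\leq N-2$, contradicting $Y(\re)\supseteq \pt P_{n,d}$, whose real dimension is $N-1$. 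Hence $Y=\mc{E}_{n,d}$.

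The main obstacle is this last step: one must know that the Zariski closure of a real subset of $\cpx[x]_d$ is genuinely defined over $\re$, and that real points of proper subvarieties of $\mc{E}_{n,d}$ have real dimension at most $N-2$. Both are standard tools in real algebraic geometry (the first from conjugation-invariance of the vanishing ideal, the second from the smooth-point analysis of Jacobians with real entries), but they are the places where it matters that $\Delta$ has real coefficients and that the irreducibility of $\Delta$ is over $\cpx$ rather than merely over $\re$.
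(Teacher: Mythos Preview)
Your argument is correct and, in essence, is the paper's argument with the details filled in. The paper's own proof is a single sentence of substance: it notes $\pt P_{n,d}\subset\mc{E}_{n,d}$, records that $\Delta(f)$ is irreducible of degree $n(d-1)^{n-1}$, and then simply asserts that ``the irreducibility of $\mc{E}_{n,d}$ implies $Zar(\pt P_{n,d})=\mc{E}_{n,d}$'' without spelling out why a proper inclusion is impossible. Your dimension count --- $P_{n,d}$ full-dimensional, hence $\pt P_{n,d}$ of real dimension $N-1$, while any proper subvariety of the irreducible hypersurface $\mc{E}_{n,d}$ has complex (hence real) dimension at most $N-2$ --- is exactly the missing justification, and your remarks about $Y$ being defined over $\re$ and about needing irreducibility of $\Delta$ over $\cpx$ are the right hygiene checks. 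So you have not taken a different route; you have written the proof the paper gestures at.
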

\begin{proof}
The discriminant $\Delta(f)$ is irreducible and has degree $n(d-1)^{n-1}$,
so the hypersurface $\mc{E}_{n,d}$ is also irreducible
and has degree $n(d-1)^{n-1}$. Since $\pt P_{n,d} \subset \mc{E}_{n,d}$,
its Zariski closure $Zar(\pt P_{n,d})$ lies on $\mc{E}_{n,d}$.
The irreducibility of $\mc{E}_{n,d}$ implies $Zar(\pt P_{n,d})=\mc{E}_{n,d}$.
\end{proof}

When $d=2$, $P_{n,2}$ reduces to the cone of positive semidefinite matrices.
A typical barrier for $P_{n,2}$ is $-\log \det A$,  where $f(x) = x^TAx$.
%
Does there exist a similar barrier for $P_{n,d}$ when $d>2$?
Unfortunately, this is impossible if we require the barrier to be of log-polynomial type,
as will be shown in the below.

Let $\lmd_{min}(f)$ denote the smallest value of a form $f(x)$ on the unit sphere
\be \label{lmd-min:R^n}
\lmd_{min}(f) := \min_{\|x\|_2=1}  f(x).
\ee
The boundary $\pt P_{n,d}$ is then characterized by
$
\lmd_{min}(f) = 0.
$
Clearly, if $\lmd_{min}(f)=0$ then $\Delta(f)=0$, but the reverse might not be true.
For instance, for the positive definite form $\hat{f}(x) = \|x\|_2^d$ (for even $d>2$),
$\lmd_{min}(\hat{f})=1$ but $\Delta(\hat{f})=0$,
because $\nabla \hat{f}(x)=0$ has a nonzero complex solution.
So the discriminantal hypersurface $\Delta(f) = 0$
intersects the interior of $P_{n,d}$ when $d>2$ is even.
This interesting fact leads to the following theorem.

\begin{theorem}  \label{psd:nobarfun}
If $d>2$ is even and $n\geq 2$, there is no polynomial $\varphi(f)$ satisfying
\bit

\item $\varphi(f)>0$ whenever $f$ lies in the interior of $P_{n,d}$, and

\item $\varphi(f)=0$ whenever $f$ lies on the boundary of $P_{n,d}$.

\eit
Therefore, $-\log \varphi(f)$ can not be a barrier function
for the cone $P_{n,d}$ when we require $\varphi(f)$ to be a polynomial,
and $P_{n,d}$ is not representable by a linear matrix inequality (LMI), that is, 
there is no symmetric matrix pencil 
\[
L(f) \, = \,  \sum_{\af\in\N^n: |\af|=d} \, f_\af A_\af  
\quad ( \mbox{ where } \, f(x) = \sum_\af \, f_\af x^\af ) 
\]
such that $ P_{n,d} = \left\{ f \in \re[x]_d: \, L(f) \succeq 0\right\}$
and $L(f) \succ 0$ for $f\in int(P_{n,d})$.  
\end{theorem}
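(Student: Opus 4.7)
The plan is to derive a contradiction from the hypothesized existence of $\varphi$, using the irreducibility of the discriminantal hypersurface, and then to deduce the LMI non-representability as an immediate corollary of this first conclusion.

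First I would assume such a $\varphi$ exists. Since $\varphi$ vanishes on $\pt P_{n,d}$, it must vanish on the Zariski closure $Zar(\pt P_{n,d})$, which by the preceding theorem equals the \emph{irreducible} hypersurface $\mc{E}_{n,d}=\{f:\Delta(f)=0\}$. Hilbert's Nullstellensatz (Theorem~\ref{HilNull}) then gives $\varphi^k\in\langle\Delta\rangle$ for some integer $k>0$, and the irreducibility of $\Delta$ makes $\langle\Delta\rangle$ prime, so $\Delta$ divides $\varphi$; write $\varphi=\Delta\cdot\psi$ for some polynomial $\psi$ in the coefficients of $f$.

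Next I would exhibit a positive definite form at which $\Delta$ nonetheless vanishes, which closes the contradiction. Take the form $\hat{f}(x)=(x_1^2+\cdots+x_n^2)^{d/2}$, well-defined since $d$ is even. Clearly $\hat{f}(u)>0$ for every nonzero $u\in\re^n$, so $\hat{f}\in int(P_{n,d})$ and the hypothesis on $\varphi$ gives $\varphi(\hat{f})>0$. On the other hand,
\[
\nabla \hat{f}(x)=d\,(x_1^2+\cdots+x_n^2)^{d/2-1}\,x,
\]
and because $n\geq 2$ and $d\geq 4$, this vanishes at the nonzero complex point $(1,i,0,\ldots,0)\in\cpx^n$. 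Hence $\hat{f}$ has a nonzero complex critical point, so $\Delta(\hat{f})=0$, which forces $\varphi(\hat{f})=\Delta(\hat{f})\psi(\hat{f})=0$, contradicting the strict positivity above.

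For the LMI statement I would proceed by a further contradiction. If $P_{n,d}=\{f:L(f)\succeq 0\}$ with $L(f)\succ 0$ on $int(P_{n,d})$, set $\varphi(f):=\det L(f)$. This is a polynomial in the coefficients of $f$, strictly positive on the interior, and zero on $\pt P_{n,d}$: indeed $L(f)\succeq 0$ throughout $P_{n,d}$, and $L(f)$ must be singular on the boundary, since otherwise strict positive definiteness is preserved under small perturbations, putting $f$ in $int(P_{n,d})$. This $\varphi$ meets both bullet conditions of the first assertion, contradicting what was just proved. The principal obstacle is really the choice of witness form: once $(x_1^2+\cdots+x_n^2)^{d/2}$ is identified as a point of $int(P_{n,d})\cap \mc{E}_{n,d}$, the divisibility argument supplied by irreducibility and the Nullstellensatz finishes the argument mechanically.
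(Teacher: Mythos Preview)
Your proposal is correct and follows essentially the same approach as the paper's proof: contradiction via the Nullstellensatz applied to $\varphi$ vanishing on the irreducible hypersurface $\Delta(f)=0$, followed by evaluation at the witness form $\hat f(x)=\|x\|_2^d$. Your write-up is slightly more explicit than the paper's in two places---you push the Nullstellensatz conclusion to an actual divisibility $\Delta\mid\varphi$ using primality (the paper stops at $\varphi^k=\Delta\cdot p$, which already suffices), and you exhibit the nonzero complex critical point $(1,i,0,\ldots,0)$ directly---but the argument is the same.
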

\begin{proof}
For the first part, we prove by contradiction. Suppose such a $\varphi$ exists.
The zero set $\lmd_{min}(f)=0$ lies on the variety $V(\varphi)$.
Since the discriminantal hypersurface $\Delta(f)=0$ is the Zariski closure of $\lmd_{min}(f)=0$,
i.e., the smallest variety containing $\lmd_{min}(f)=0$,
$\Delta(f)=0$ is a subvariety of $V(\varphi)$.
So $\varphi(f)$ is vanishing on $\Delta(f)=0$.
By Hilbert Nullstenllensatz (see Theore~\ref{HilNull}),
there exist an integer $k>0$ and a polynomial $p(f)$ satisfying
\[
\varphi(f)^k = \Delta(f) \cdot p(f).
\]
Now we choose $\hat{f}(x) = \|x\|_2^{d} \in int(P_{n,d})$ in the above,
then $\Delta(\hat{f})=0$ and $\varphi(\hat{f})=0$, which contradicts the first item.

For the second part, the non-existence of $-\log$-polynomial type barrier function
immediately follows the first part of the theorem.
The non-existence of LMI representation also clearly follows the first part, 
because otherwise the determinant $\det L(f)$ 
would be a polynomial satisfying the first part.
\end{proof}

Theorem~\ref{psd:nobarfun} tells us that there does not exist a polynomial
$\varphi (f)$ such that $-\log \varphi (f)$ is a barrier for $P_{n,d}$.
However, $-\log \varphi (f)$ would be a barrier if $\varphi(f)$ is not required to be a polynomial.
Actually
\be  \label{logbar:lmd-min}
\phi(f) = -\log \lmd_{min}(f)
\ee
is a barrier for $P_{n,d}$,
where $\lmd_{min}(f)$ is defined by \reff{lmd-min:R^n}.
The function $\lmd_{min}(f)$ is semialgebraic, positive in $int(P_{n,d})$, and zero on $\pt P_{n,d}$.
The barrier $\phi(f)$ is also convex in $int(P_{n,d})$.
%
%
%
%

\begin{theorem}  \label{psd-bar:salg}
The function $\phi(f)$ is convex in $int(P_{n,d})$.
%
\end{theorem}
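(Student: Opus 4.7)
The plan is to exploit the fact that $\lmd_{min}(f)$ is a minimum of a family of linear functionals in $f$ and then compose with $-\log$. The key observation is that the evaluation map $f \mapsto f(x)$ at a fixed point $x$ is \emph{linear} in $f$ (and hence both concave and convex) when we view $\re[x]_d$ as a finite-dimensional vector space of coefficients. This is because a form of degree $d$ at a fixed $x$ is just a linear combination of the coefficients with the monomial values $x^\af$ as fixed scalars.

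First I would fix the unit sphere $\bdS^{n-1} = \{x \in \re^n : \|x\|_2 = 1\}$ and, for each $x \in \bdS^{n-1}$, define the linear functional $\ell_x : \re[x]_d \to \re$ by $\ell_x(f) = f(x)$. Then by definition
\[
\lmd_{min}(f) = \inf_{x \in \bdS^{n-1}} \ell_x(f),
\]
and the infimum is attained by compactness of $\bdS^{n-1}$. Since each $\ell_x$ is linear in $f$, the pointwise infimum $\lmd_{min}$ is a concave function of $f$ on all of $\re[x]_d$.

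Next, on the interior $int(P_{n,d})$, the form $f$ is positive definite, so $\lmd_{min}(f) > 0$. The scalar function $t \mapsto -\log t$ is convex and monotonically decreasing on $(0,\infty)$. Therefore the composition $\phi(f) = -\log \lmd_{min}(f)$ is convex on $int(P_{n,d})$, because composing a decreasing convex function with a concave function yields a convex function. This is a standard fact: for $f_1, f_2 \in int(P_{n,d})$ and $\theta \in [0,1]$, concavity gives $\lmd_{min}(\theta f_1 + (1-\theta) f_2) \geq \theta \lmd_{min}(f_1) + (1-\theta) \lmd_{min}(f_2)$, and then applying the decreasing convex function $-\log$ and using its convexity on the right-hand side yields $\phi(\theta f_1 + (1-\theta) f_2) \leq \theta \phi(f_1) + (1-\theta) \phi(f_2)$.

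There is essentially no obstacle here; the only subtle point is recognizing that $f \mapsto f(x)$ is linear (not just affine) in the coefficient vector of $f$, and that the infimum-of-linear-functionals construction produces a concave function even though the minimizing $x$ depends on $f$. The nonsmoothness of $\lmd_{min}$ (which is mentioned in the surrounding discussion and which would complicate any self-concordance argument) is irrelevant for the convexity statement, since convexity does not require differentiability.
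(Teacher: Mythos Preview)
Your proof is correct and follows essentially the same approach as the paper: show that $\lmd_{min}$ is concave (as an infimum of linear evaluation functionals), then compose with the decreasing convex function $-\log$. The paper compresses the first step into the single displayed inequality $\lmd_{min}(\theta f^{(1)}+(1-\theta)f^{(2)})\ge \theta\lmd_{min}(f^{(1)})+(1-\theta)\lmd_{min}(f^{(2)})$, while you spell out explicitly why this holds via the linearity of $f\mapsto f(x)$ at fixed $x$; the underlying idea is identical.
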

\begin{proof}
For any $f^{(1)}, f^{(2)} \in int(P_{n,d})$, from \reff{lmd-min:R^n} we have
\[
\lmd_{min}\left( \theta f^{(1)} + (1-\theta) f^{(2)} \right) \geq
\theta \lmd_{min}\left( f^{(1)}\right) + (1-\theta) \lmd_{min}\left( f^{(2)} \right),
\quad \forall \, \theta \in [0,1].
\]
Since $-\log(\cdot)$ is concave, the above then implies
\[
\phi\left( \theta f^{(1)} + (1-\theta) f^{(2)} \right) \leq
\theta \phi\left(f^{(1)}\right) + (1-\theta) \phi\left( f^{(2)}\right).
\]
So $\phi(f)$ is convex in $int(P_{n,d})$.
\end{proof}

However, the barrier $-\log \lmd_{min}(f)$ is not very useful
in practice, because computing $\lmd_{min}(f)$ is quite difficult.
When $d=4$, it is NP-hard to compute $\lmd_{min}(f)$.

\subsection{Computing the discriminantal variety $\Delta(f)=0$}

We have seen that $\pt P_{n,d}$ lies on the discriminantal hypersurface $\Delta(f)=0$.
Cayley's method would be applied to compute $\Delta(f)$, as introduced in Chap.~2 of \cite{GKZ}.
When $n=2$ and $d=4$, the boundary of $P_{2,4}$ lies on the hypersurface
defined by the polynomial
\[
\baray{c}
b^2c^2d^2-4ac^3d^2-4b^3d^3+18abcd^3-27a^2d^4-4b^2c^3e+16ac^4e+18b^3cde-80abc^2de \\
-6ab^2d^2e+144a^2cd^2e-27b^4e^2+144ab^2ce^2-128a^2c^2e^2-192a^2bde^2+256a^3e^3,
\earay
\]
where $a,b,c,d,e$ are the coefficients of
$f(x) = ax_1^4+bx_1^3x_2+cx_1^2x_2^2+dx_1x_2^3+ex_2^4$.
It is a homogenous polynomial of degree $6$ in $5$ variables.
When $n=3$ and $d=3$, $\Delta(f)$ is a homogeneous polynomial of degree $12$ in $20$ variables,
and has 21,894 terms in its full expansion.
When $n=3$ and $d=4$, $\Delta(f)$ is a form of degree $27$ in $15$ variables
and has thousands of terms.
A very nice method for computing discriminants of trivariate quartic forms
is described in Section~6 of \cite{SSS}.

Generally, it is quite complicated to compute $\Delta(f)$ directly.
A more practical approach for finding the discriminantal locus $\Delta(f)=0$
is to apply elimination theory (see \cite{CLO97}).
Let $f_p(x)$ be a form in $x$ whose coefficients are polynomial in a parameter $p = (a,b,...)$
over the rational field $\Q$, i.e., in the ring $\Q[p]$.
First, we dehomogenize $f_p(x)$ like
\[
g(1,x_2,\ldots,x_n) = f_p(1,x_2,\ldots,x_n).
\]
If $f_p(x) \in \pt P_{n,d}$ has no nontrivial critical point
on the hyperplane $x_1 = 0$ at infinity,
then the overdetermined polynomial system
\be \label{eq:g=grad=0}
g = \frac{\pt g}{\pt x_2}  = \cdots =  \frac{\pt g}{\pt x_n} = 0
\ee
must have a solution. Hence, we can use the elimination method described in \cite{CLO97}
to find the polynomial equation that the parameter $p$ satisfies.
By eliminating $x_2,\ldots,x_n$ in \reff{eq:g=grad=0},
we can get a polynomial $\varphi$ such that
if \reff{eq:g=grad=0} has a solution then $\varphi(p)=0$.
Hence, the discriminantal locus $\Delta(f_p) = 0$ lies on $\varphi(p) = 0$.
The polynomial $\varphi(p) = 0$ can be found by using function 
{\it elim} in software {\it Singular} \cite{Sing}.
%

\begin{exm}  \label{4exmp:psd-form}
(i) Consider the polynomials parameterized as
\[
f_{a,b}(x) = x_1^4 + x_2^4 + x_3^4
-a(x_1x_2^3 + x_2x_3^3 + x_3x_1^3) - b(x_1^3x_2+x_2^3x_3+x_3^3x_1).
\]
Its discriminant $\varphi(a,b) = \Delta(f_{a,b})$ is
\[
\baray{c}
16384(a+b-1)\cdot (a+b+2)^3\cdot (7a^2+7b^2-13ab+4a+4b+16)^4  \cdot \\
(7a^5+8ba^4-17a^4-14ba^3+16a^3b^2+16a^3-16a^2+48ba^2-21a^2b^2+16a^2b^3\\
+48ab^2-32ab-14ab^3+8ab^4-64a+7b^5-17b^4-16b^2+16b^3-64b+128)^3.
\earay
\]
The above formula is obtained by using a Maple code
that was kindly sent to the author by Bernd Sturmfels
for computing $(3,3,3)$-resultants. Let
\[
F = \left\{(a,b) \in \re^2: f_{a,b} \mbox{ is SOS in } x \right\}.
\]
It is a convex region in $\re^2$. The shape of $F$ would be found
by running the following Matlab code supported by software {\it YALMIP} \cite{YALMIP}
\begin{verbatim}
sdpvar x_1 x_2 x_3 a b;
p = x_1^4+x_2^4+x_3^4-a*(x_1*x_2^3+x_2*x_3^3+x_3*x_1^3)...
-b*(x_1^3*x_2+x_2^3*x_3+x_3^3*x_1);
v = monolist([x_1 x_2 x_3],2);
M = sdpvar(length(v));
L = [coefficients(p-v'*M*v,[x_1 x_2 x_3])==0,M>=0];
w = plot(L,[a,b],[1,1,1], 100);
fill(w(1,:),w(2,:),'b');
\end{verbatim}
The set $F$ is drawn in the shaded area of
the upper left picture in Figure~\ref{fig:psd-form}.
The curves there are defined by $\varphi(a,b) = 0$.
Since every nonnegative trivariate quartic form is SOS (see Reznick \cite{Rez00}),
we know $F=\left\{(a,b):f_{a,b}(x) \in P_{3,4}\right\}$.

\smallskip
\noindent
(ii) Consider the polynomials parameterized as
\[
\baray{c}
f_{a,b}(x) = x_1^4+x_2^4+x_3^4+x_4^4+
a(x_1^2x_2^2+x_2^2x_3^2-x_4^2x_1^2-x_3^2x_4^2) \\
+b(x_1^2x_3^2-x_2^2x_4^2+x_1x_2x_3x_4).
\earay
\]
Eliminating $x_2,x_3,x_4$ in \reff{eq:g=grad=0} gives $\varphi(a,b)$ as
\[
\baray{c}
(a+2)\cdot (a-2)\cdot (b+2)\cdot (b-2)\cdot (16a^2+16ab+5b^2+32a+16b+16) \cdot \\
(16a^2-16ab+5b^2-32a+16b+16) \cdot (4a^2b-8a^2-5b^2+16)(5b^2-16b+16).
\earay
\]
The curve $\Delta(f_{a,b})=0$ lies on $\varphi(a,b)=0$. Let
\[
F = \left\{(a,b) \in \re^2: f_{a,b} \mbox{ is SOS in } x \right\}.
\]
It is a convex region.
Using the method in (i), we get $F$ is the shaded area of
the upper right picture in Figure~\ref{fig:psd-form}.
The curves there are defined by $\varphi(a,b)=0$.
Let $G=\{(a,b):f_{a,b} \in P_{4,4}\}$.
Clearly, $F \subset G$ and the boundary of $G$ lies on $\varphi(a,b)=0$.
From the picture, we can see that $F$ is a maximal convex region
whose boundary lies on $\varphi(a,b)=0$. So $F=G$.

\begin{figure}[htb]
\centering
\btab{cc}
\includegraphics[height=.3\textwidth]{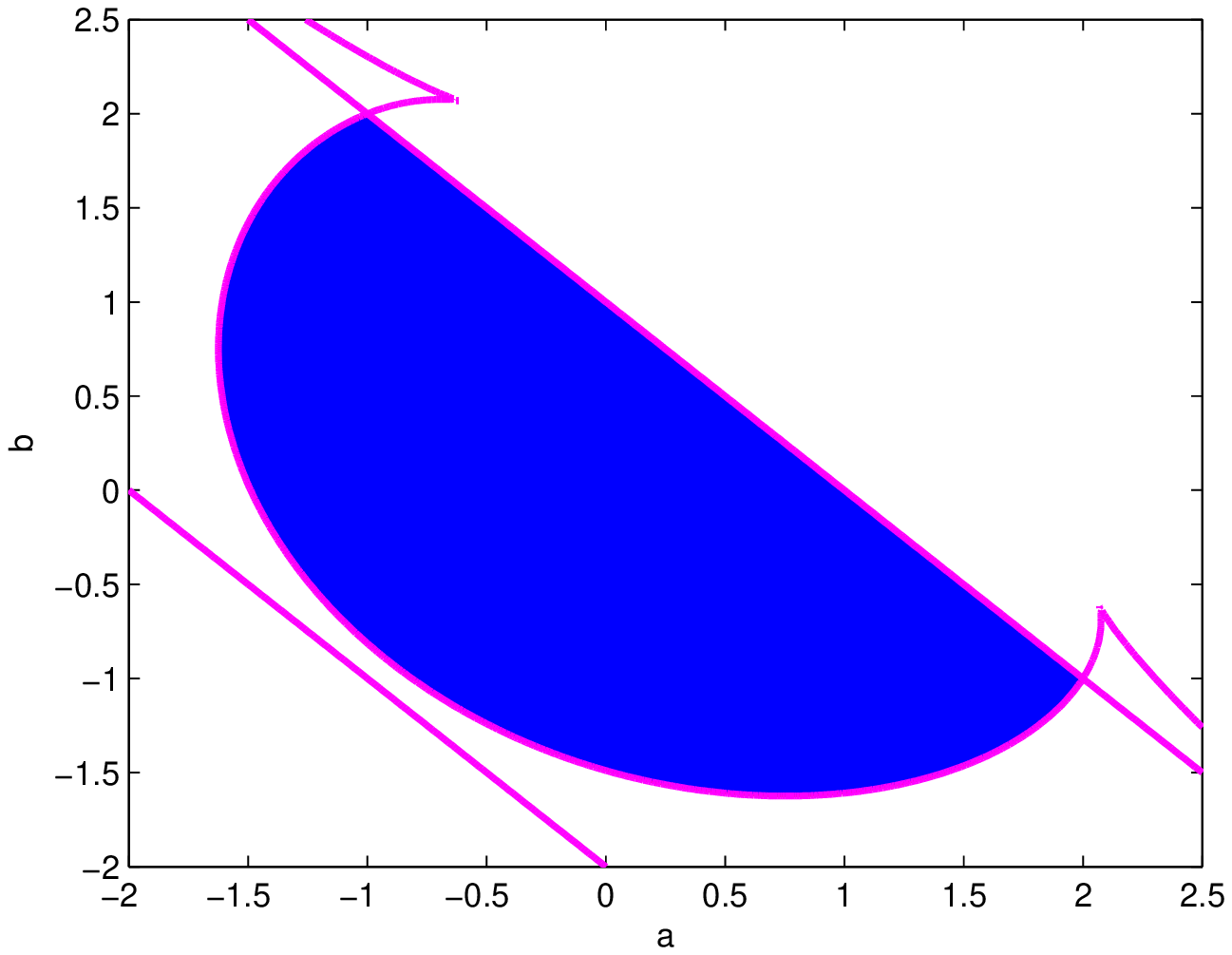} &
\includegraphics[height=.3\textwidth]{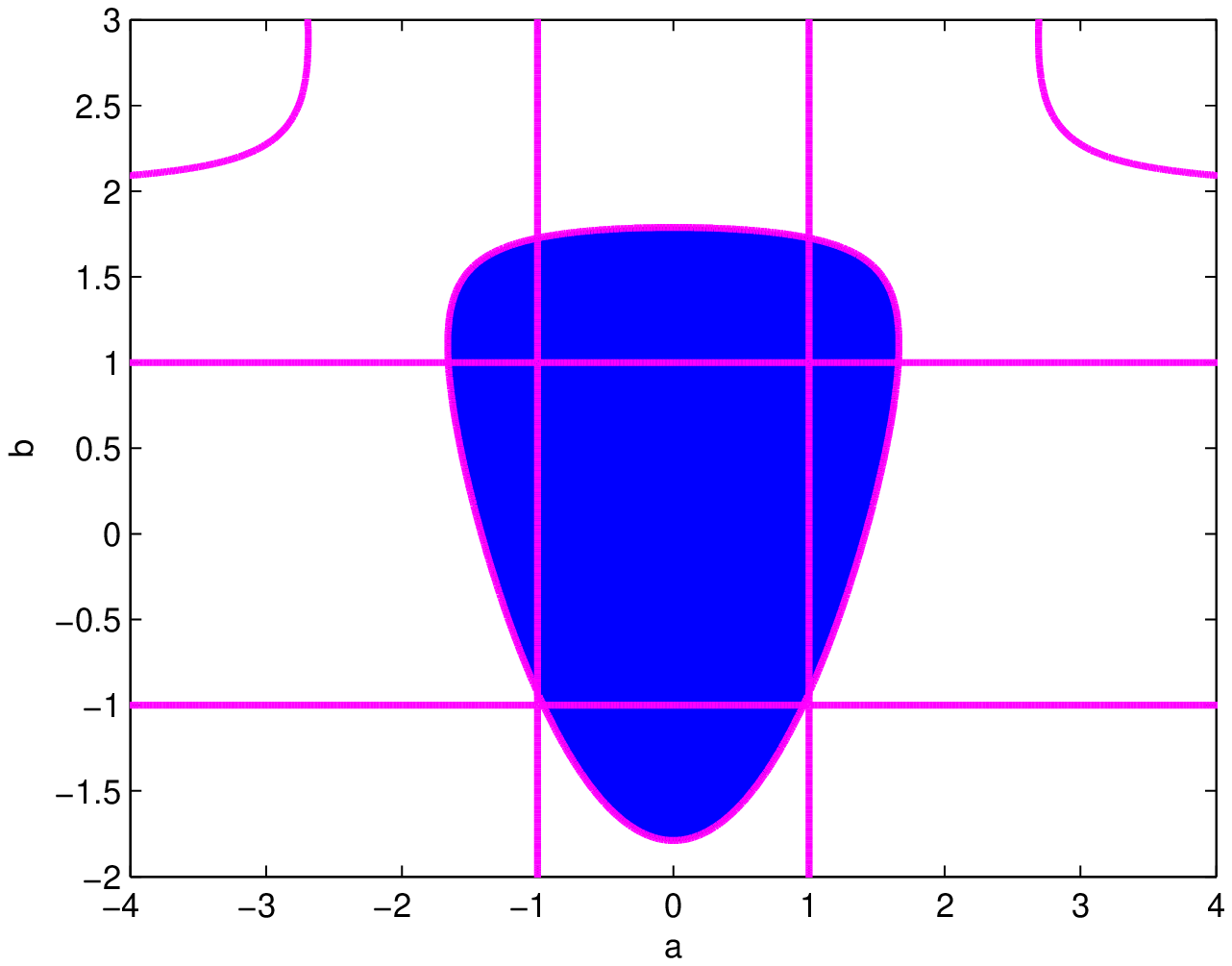} \\
\includegraphics[height=.3\textwidth]{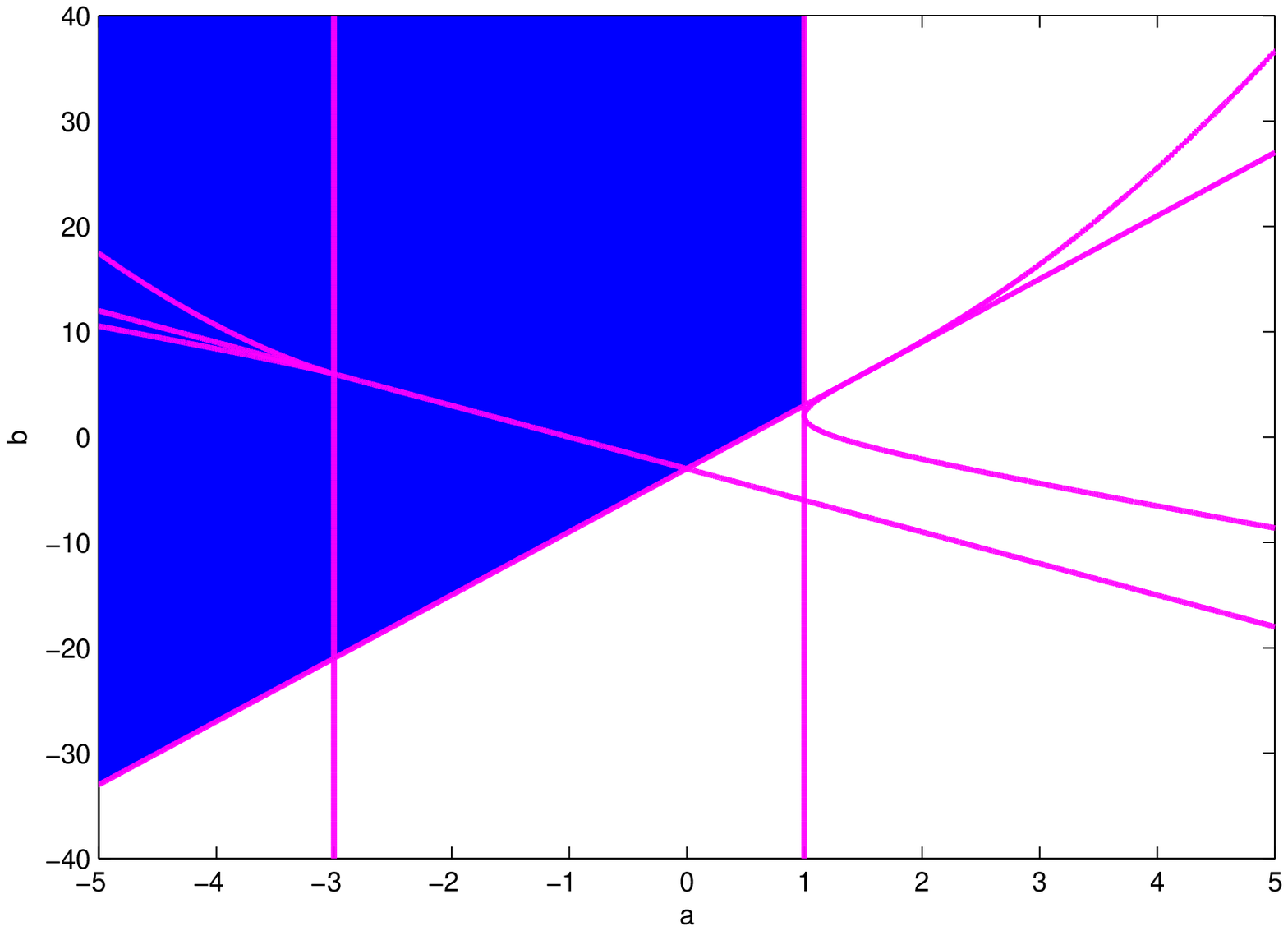} &
\includegraphics[height=.3\textwidth]{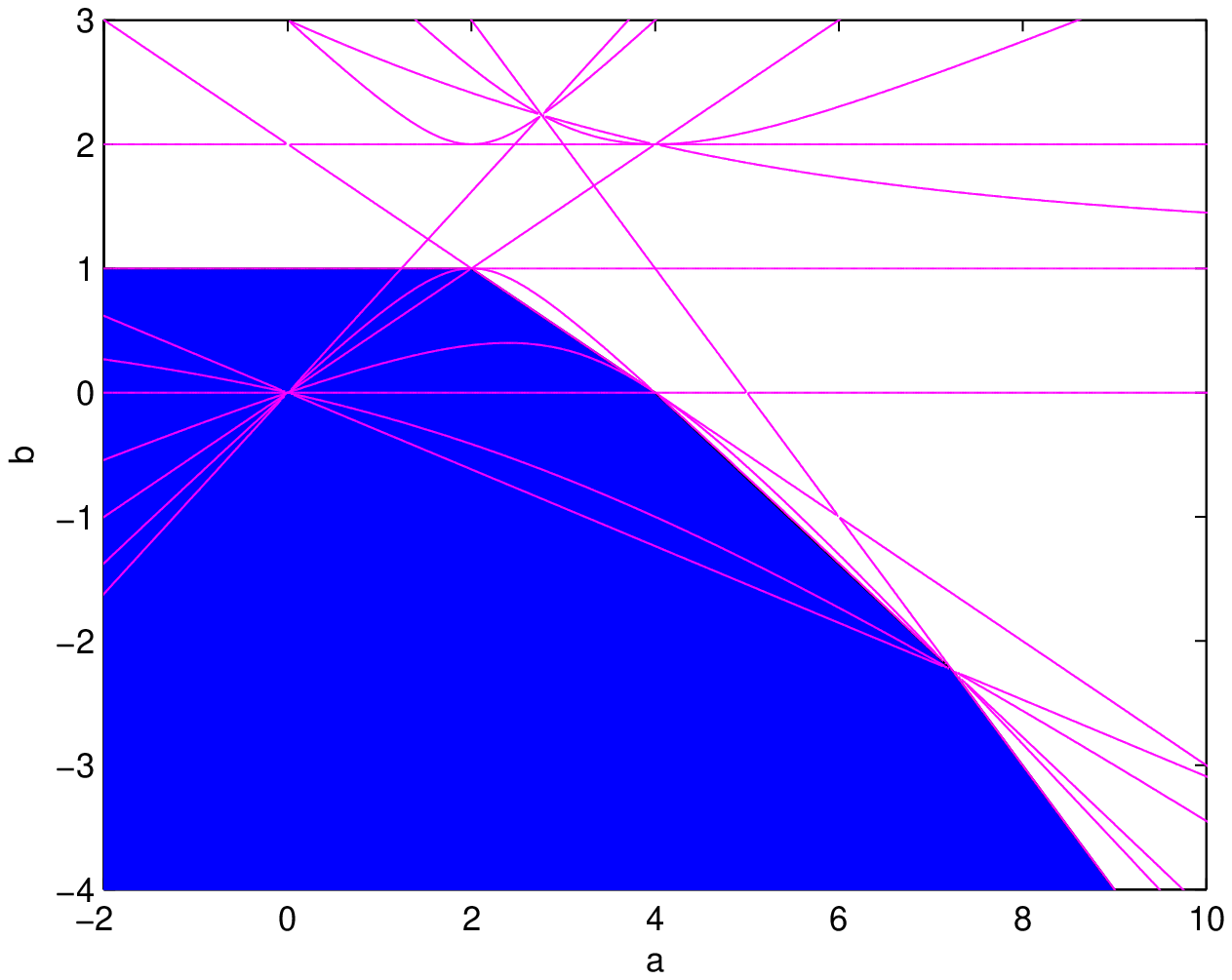}
\etab
\caption{ The pictures of curves $\varphi(a,b)=0$
and regions $F$ for polynomials $f_{a,b}(x)$ in Example~\ref{4exmp:psd-form}.
The upper left is for (i), the upper right for (ii),
the lower left for (iii), and the lower right for (iv).}
\label{fig:psd-form}
\end{figure}

\smallskip
\noindent
(iii) Consider the polynomials parameterized as
\[
f_{a,b}(x) = x_1^6 + x_2^6 + x_3^6
-a\Big(x_1^2(x_2^4+x_3^4)+x_2^2(x_3^4+x_1^4)+x_3^2(x_1^4+x_2^4)\Big)+bx_1^2x_2^2x_3^2.
\]
When $a=1, b=3$, $f_{1,3}(x)$ becomes Robinson's polynomial
that is nonnegative but not SOS (see Reznick \cite{Rez00}).
Robinson's polynomial has 10 nontrivial zeros, so $f_{1,3} \in P_{3,6}$.
Eliminating $x_2,x_3$ in \reff{eq:g=grad=0} gives $\varphi(a,b)$ as
\[
(a-1)\cdot (a+3)\cdot (3a+b+3)\cdot (6a-b-3) \cdot (2a^3+a^2b+3a^2-b^2+3b-9).
\]
%
%
The curve $\Delta(f_{a,b})=0$ lies on $\varphi(a,b)=0$. Let
\[
F = \left\{(a,b) \in \re^2: (x_1^2+x_2^2+x_3^2+x_4^2)f_{a,b} \mbox{ is SOS in } x \right\}.
\]
It is an unbounded convex set in $\re^2$.
To get the shape of $F$, we bound $a,b$ as $a+5\geq 0$, $40-b\geq 0$.
Using the method in (i), we get $F$
is the shaded area of the lower left picture in Figure~\ref{fig:psd-form}.
The curves there are defined by $\varphi(a,b)=0$. Let $G=\{(a,b): f_{a,b} \in P_{3,6}\}$.
Clearly, $F \subset G$ and the boundary of $G$ lies on $\varphi(a,b)=0$.
If $f_{a,b}(x) \in P_{3,6}$, then $f_{a,b}(1,1,1)\geq 0$ and $f_{a,b}(1,1,0)\geq 0$ imply
\[
b \geq 6a-3, \qquad a \leq 1.
\]
From the picture, we can see that $F$ is a maximal convex region
whose boundary lies on $\varphi(a,b)=0$
and satisfies the above two linear constraints. So $F=G$.

\smallskip
\noindent
(iv) Consider the polynomials parameterized as
\[
\baray{c}
f_{a,b}(x) = (x_1^2+\cdots+x_5^2)^2
- a (x_1^2x_2^2+ x_2^2x_3^2+x_3^2x_4^2+x_4^2x_5^2+ x_5^2x_1^2) \\
-b(x_1^4+x_2^4+x_3^4+x_4^4+x_5^4).
\earay
\]
When $a=4,b=0$, $f_{4,0}(x)$ becomes Horn's polynomial (see Reznick \cite{Rez00}).
Eliminating $x_2,x_3,x_4,x_5$ in \reff{eq:g=grad=0} gives $\varphi(a,b)$ as
\[
\baray{c}
(a+b-5) \cdot (a-2b)\cdot (a+2b-4)\cdot (b-1)\cdot b \cdot (b-2) \cdot (a^2+2ab-4b^2)\cdot \\
(a^2-2b^2-4a+6b)\cdot (a^2-2ab-4b^2-4a+16b)\cdot (ab+2b^2-a-6b).
\earay
\]
The curve $\Delta(f_{a,b})=0$ lies on $\varphi(a,b)=0$. Let
\[
F = \left\{(a,b) \in \re^2: (x_1^2+x_2^2+x_3^2+x_4^2+x_5^2)f_{a,b} \mbox{ is SOS in } x \right\}.
\]
It is also an unbounded convex set.
To get the shape of $F$, we bound $a,b$ as $a+2\geq 0$, $b+4\geq 0$.
Using the method in (i), we get $F$ is the shaded area of
the lower right picture in Figure~\ref{fig:psd-form}.
The curves there are defined by $\varphi(a,b)=0$.
Let $G=\{(a,b): f_{a,b} \in P_{3,6}\}$. Clearly, $F \subset G$ and the boundary of $G$ lies on $\varphi(a,b)=0$.
Then $f_{a,b}(1,0,0,0,0) \geq 0$, $f_{a,b}(1,1,0,0,0) \geq 0$, $f_{a,b}(1,1,1,1,1) \geq 0$
imply that any pair $(a,b)\in G$ satisfies
\[
a+b-5 \leq 0, \quad a+2b -4 \leq 0, \quad b-1 \leq 0.
\]
Since $(3.10,0.5), (5.5,-1), (9.1,-4) \notin G$
(verified by software {\it GloptiPoly~3} \cite{GloPol3}),
by observing the lower right picture in Figure~\ref{fig:psd-form},
we can see that $F$ is a maximal convex region that satisfies the above three linear constraints,
excludes the previous 3 pairs, and has the boundary lying on $\varphi(a,b)=0$. So $F=G$.
\qed
\end{exm}

\subsection{Nonnegative multihomogeneous forms}

In this subsection, we study the cone of nonnegative multihomogeneous forms.
Let $M_{d_1,\ldots,d_r}^{n_1,\ldots,n_r}$ denote the space
of multihomogeneous forms in the space $\re^{n_1} \times \cdots \times \re^{n_r}$
which are homogeneous of degree $d_i$ in each $\re^{d_i}$.
Thus every $f \in M_{d_1,\ldots,d_r}^{n_1,\ldots,n_r}$ has the form
\[
f =  \sum_{ (\af_1,\ldots,\af_r) \in \N^{n_1} \times \cdots \times \N^{n_r} }
f_{\af_1,\ldots,\af_r}  (x^{(1)})^{\af_1} \cdots  (x^{(r)})^{\af_r}.
\]
Here we assume all the degrees $d_i$ are even.
Let $P_{d_1,\ldots,d_r}^{n_1,\ldots,n_r}$ be the cone of forms in
$M_{d_1,\ldots,d_r}^{n_1,\ldots,n_r}$ that are nonnegative everywhere.

Given $f \in M_{d_1,\ldots,d_r}^{n_1,\ldots,n_r}$,
we say $(u^{(1)},\ldots,u^{(r)}) \in \prod_{i=1}^r \cpx^{n_i}$
is a critical point of $f$ in $\prod_{i=1}^r \P^{n_i-1}$
if every $u^{(i)} \ne 0$ and
\[
\nabla_{x^{(1)}} f(u^{(1)},\ldots,u^{(r)}) = 0, \quad \ldots, \quad
\nabla_{x^{(r)}} f(u^{(1)},\ldots,u^{(r)}) = 0.
\]
Let $H_{d_1,\ldots,d_r}^{n_1,\ldots,n_r} \subset M_{d_1,\ldots,d_r}^{n_1,\ldots,n_r}$ be the set
\[
H_{d_1,\ldots,d_r}^{n_1,\ldots,n_r} = \left\{ f \in M_{d_1,\ldots,d_r}^{n_1,\ldots,n_r}:
f \mbox{ has a critical point in } \prod_{i=1}^r \P^{n_i-1} \right\}.
\]
It was shown in \cite[Prop.~2.3 in Chap.13]{GKZ} that
$H_{d_1,\ldots,d_r}^{n_1,\ldots,n_r}$ is a hypersurface if and only if
\be  \label{mdis:hpsurf-cond}
2(n_i-1) \leq n_1+\cdots+n_r-r \quad
\mbox{ for all $i$: } \, d_i = 1.
\ee
In particular, if every $d_i>1$, $H_{d_1,\ldots,d_r}^{n_1,\ldots,n_r}$
is a hypersurface for any dimensions $n_1,\ldots, n_r$.
When \reff{mdis:hpsurf-cond} holds,
we still denote by $\Delta(f)$ a defining polynomial of
the lowest degree for $H_{d_1,\ldots,d_r}^{n_1,\ldots,n_r}$.
It can be chosen to have coprime integer coefficients and is unique up to a sign.
The polynomial $\Delta(f)$ is also called the discriminant of the multihomogeneous form $f$.

\begin{theorem}
When all $d_i>0$ are even,
the boundary $\pt P_{d_1,\ldots,d_r}^{n_1,\ldots,n_r}$
lies on the hypersurface $H_{d_1,\ldots,d_r}^{n_1,\ldots,n_r}$
whose degree is the coefficient of the term $z_1^{n_1-1}\cdots z_r^{n_r-1}$
in the power series expansion of the following rational function
\[
\left( \prod_{j=1}^r(1+z_j) \left(1 -
\sum_{j=1}^r \frac{d_jz_j}{(1+z_j)} \right) \right)^{-2}.
\]
\end{theorem}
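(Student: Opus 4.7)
The plan is to split the theorem into two parts: the geometric inclusion $\pt P_{d_1,\ldots,d_r}^{n_1,\ldots,n_r} \subset H_{d_1,\ldots,d_r}^{n_1,\ldots,n_r}$ and the combinatorial degree formula. The inclusion is a direct multi-block generalization of the argument used for $\pt P_{n,d}$ in Section~\ref{sec:psd}, while the degree formula should reduce to a known computation from \cite[Chap.~13]{GKZ}.

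For the inclusion, I would consider the minimum of $f$ on the compact product of unit spheres $\prod_{i=1}^r S^{n_i-1}$, namely
\[
\mu(f) = \min_{(x^{(1)},\ldots,x^{(r)})} \, \big\{ f(x^{(1)},\ldots,x^{(r)}) : \|x^{(i)}\|_2 = 1 \text{ for all } i \big\}.
\]
Since each $d_i>0$ is even, $f$ is nonnegative everywhere iff $\mu(f)\geq 0$, and $f$ lies on $\pt P_{d_1,\ldots,d_r}^{n_1,\ldots,n_r}$ iff $\mu(f)=0$. At a minimizer $u=(u^{(1)},\ldots,u^{(r)})$ with $\mu(f)=0$, Lagrange multipliers yield $\nabla_{x^{(i)}} f(u) = \lmd_i u^{(i)}$ for each $i$, and Euler's identity applied block by block gives $x^{(i)} \cdot \nabla_{x^{(i)}} f = d_i f$, so evaluating at $u$ forces $\lmd_i \|u^{(i)}\|_2^2 = d_i f(u) = 0$ and hence $\lmd_i = 0$. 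Thus every $\nabla_{x^{(i)}} f(u)=0$ with all $u^{(i)}\ne 0$, which means $u$ is a critical point of $f$ in $\prod_{i=1}^r \P^{n_i-1}$. Therefore $f \in H_{d_1,\ldots,d_r}^{n_1,\ldots,n_r}$, and since the evenness of every $d_i$ ensures \reff{mdis:hpsurf-cond} is automatic, $\Delta(f)$ is a well-defined defining polynomial.

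For the degree formula I would invoke the general result on multihomogeneous discriminants in \cite[Chap.~13, \S 2]{GKZ}. There the degree of the $X$-discriminant associated with the Segre--Veronese embedding of $\P^{n_1-1} \times \cdots \times \P^{n_r-1}$ of multi-degree $(d_1,\ldots,d_r)$ is computed via the generating function
\[
\sum_{(\alpha_1,\ldots,\alpha_r)} \deg(\Delta_{d_1,\ldots,d_r;\,\alpha_1,\ldots,\alpha_r}) \, z_1^{\alpha_1}\cdots z_r^{\alpha_r}
= \left( \prod_{j=1}^r (1+z_j) \left(1 - \sum_{j=1}^r \frac{d_j z_j}{1+z_j}\right)\right)^{-2},
\]
and the degree corresponding to the class $H_{d_1,\ldots,d_r}^{n_1,\ldots,n_r}$ is read off as the coefficient of $z_1^{n_1-1}\cdots z_r^{n_r-1}$. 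I would simply quote this and verify that in the case $r=1$ it recovers $n(d-1)^{n-1}$, the known degree of $\Delta(f)$ used in Section~\ref{sec:psd}, as a sanity check.

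The inclusion step is essentially routine once one sets up the right Lagrangian and uses per-block Euler relations; the genuine difficulty is entirely on the combinatorial side, and here it is bypassed by citing GKZ rather than re-deriving the generating function via Chern class computations on the relevant projective bundle. If one wanted a self-contained derivation, the main obstacle would be identifying $H_{d_1,\ldots,d_r}^{n_1,\ldots,n_r}$ with the dual variety of the Segre--Veronese variety and then running the Katz--Kleiman type dimension/degree calculation, which is precisely what GKZ carry out.
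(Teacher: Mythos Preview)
Your proposal is correct and follows essentially the same approach as the paper: minimize $f$ over the product of unit spheres, use the vanishing of the minimum to obtain a critical point in $\prod_i \P^{n_i-1}$, and cite \cite[Chap.~13, Theorem~2.4]{GKZ} for the degree formula. Your Lagrange-multiplier/Euler-identity argument makes explicit the step the paper leaves implicit when it asserts that the block gradients $\nabla_{x^{(i)}} f(u)$ all vanish at the minimizer.
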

\begin{proof}
Since all $d_i>0$ are even, the condition \reff{mdis:hpsurf-cond} holds,
and $H_{d_1,\ldots,d_r}^{n_1,\ldots,n_r}$ is a hypersurface defined by $\Delta(f)=0$.
A multihomogeneous form $f \in P_{d_1,\ldots,d_r}^{n_1,\ldots,n_r}$ if and only if
\[
\lmd_{min}(f) := \min_{\|x^{(1)}\|_2 = \cdots = \|x^{(r)}\|_2 = 1}
f(x^{(1)}, \ldots , x^{(r)}) \quad \geq \quad 0.
\]
Clearly, $f\in\pt P_{d_1,\ldots,d_r}^{n_1,\ldots,n_r}$
if and only if $\lmd_{min}(f) = 0$. If $f \in \pt P_{d_1,\ldots,d_r}^{n_1,\ldots,n_r}$,
then we can find $u^{(1)},\ldots,u^{(r)}$ of unit length satisfying
$f(u^{(1)},\ldots,u^{(r)})=0$ and
\[
\nabla_{x^{(1)}} f(u^{(1)},\ldots,u^{(r)}) = 0, \quad \ldots, \quad
\nabla_{x^{(r)}} f(u^{(1)},\ldots,u^{(r)}) = 0.
\]
Thus, $f$ also belongs to $H_{d_1,\ldots,d_r}^{n_1,\ldots,n_r}$.
The degree formula for $H_{d_1,\ldots,d_r}^{n_1,\ldots,n_r}$
is given by Theorem~2.4 of Chapter~13 in \cite{GKZ}.
\end{proof}

\begin{figure}[htb]
\centering
\btab{cc}
\includegraphics[width=.6\textwidth]{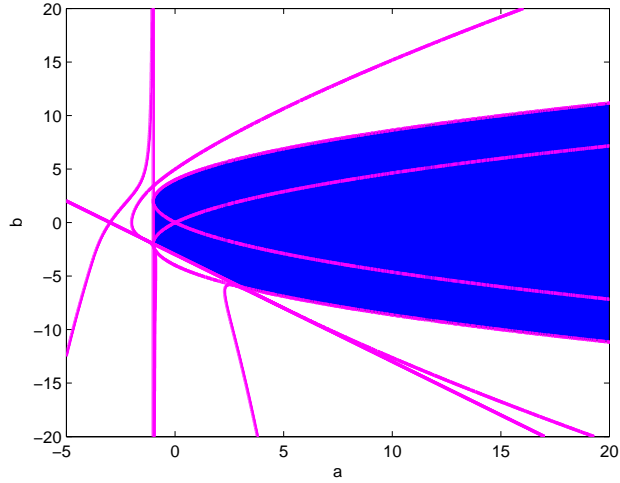}
\etab
\caption{The picture of curve $\varphi(a,b)=0$
and region $F$ for bi-quadratic forms $f_{a,b}(x)$ in Example~\ref{exmp-biQ:3-3}.}
\label{fig:biQ-var6deg4}
\end{figure}

\begin{exm}  \label{exmp-biQ:3-3}
Consider the bi-quadratic forms parameterized as
\[
\baray{c}
f_{a,b}(x) = (x_1^2+x_2^2+x_3^2)(x_4^2+x_5^2+x_6^2)
+a(x_1^2x_5^2+x_2^2x_6^2+x_3^2x_4^2) \\
+b(x_1x_2x_4x_5+x_1x_3x_4x_6+x_2x_3x_5x_6).
\earay
\]
Here $n_1=n_2=3,d_1=d_2=2$.
First, we dehomogenize $f_{a,b}(x)$ as $g = f_{a,b}(1,x_2,x_3,x_4, 1, x_6)$,
and then use the function {\it elim} in {\it Singular}
to determine all pairs $(a,b)$ satisfying $\Delta(f_{a,b}) = 0$.
Eliminating $x_2,x_3,x_4,x_6$ from
\[
 g = \frac{\pt g}{\pt x_2} = \frac{\pt g}{\pt x_3} =
\frac{\pt g}{\pt x_4} = \frac{\pt g}{\pt x_6} =0
\]
gives the equation $\varphi(a,b)=0$ where $\varphi(a,b)$ is
\[
\baray{l}
(a+1)\cdot (a+b+3) \cdot (a^2-ab+b^2) \cdot (-b^2+4a+4b) \cdot (-b^2+4a-4b) \cdot \\
(a^3b^4-16a^6-8a^4b^2-4a^3b^3+3a^2b^4+ab^5-80a^5-16a^4b-32a^3b^2-20a^2b^3 \\
-4ab^4+b^5-96a^4-32a^3b-24a^2b^2-12ab^3-5b^4).
\earay
\]
The curve $\Delta(f_{a,b}) = 0$ lies on $\varphi(a,b)=0$. Let
\[
F = \left\{(a,b) \in \re^2: (1+x_2^2+x_3^2+x_4^2+x_6^2) \cdot
f_{a,b}(1,x_2,x_3,x_4,1,x_6) \mbox{ is SOS } \right\}.
\]
By the method used in Example~\ref{4exmp:psd-form},
$F$ is drawn in the shaded area of Figure~\ref{fig:biQ-var6deg4}.
The curves there are defined by $\varphi(a,b)=0$.
Let $G=\left\{(a,b):f_{a,b}(x) \in P_{2,2}^{3,3}\right\}$.
Clearly, $F\subset G$ and the boundary of $G$ lies on $\varphi(a,b)=0$.
If $f_{a,b}(x) \in P_{2,2}^{3,3}$, then from
\[
f_{a,b}(1,1,1,1,1,1)\geq 0, \quad f_{a,b}(1,0,0,1,0,0)\geq 0
\]
we know every $(a,b)\in G$ satisfies
\[
a+b+3 \geq 0, \qquad a+1 \geq 0.
\]
Because $f_{20,15}(x) \not\in G$
($\because \nabla_{x_1,x_2,x_3}^2 f_{20,15}$ has negative eigenvalue at $(1,1,0)$)
and $f_{20,-15}(x) \not\in G$ ($\because \nabla_{x_1,x_2,x_3}^2 f_{20,-15}$
has negative eigenvalue at $(1,-1,0)$), from the picture we can see that $F$ is a maximal convex region
that excludes $(20,15)$ and $(20,-15)$, satisfies the above two linear constraints,
and has boundary lying on $\varphi(a,b)=0$. So $F=G$.
\qed
\end{exm}

\section{Polynomials nonnegative on a variety} \label{sec:real-var}
\setcounter{equation}{0}

This section studies the cone $P_d(K)$ when $K$ is a real algebraic variety defined as
\[
K = \{ x \in \re^n: \, g_1(x) = \cdots = g_m(x) =0 \}.
\]
Here $g=(g_1,\ldots,g_m)$ is a tuple of polynomials. For convenience, denote $P_d(K)$ as
\[
P_d(g) = \big\{ f(x) \in \re[x]_{\leq d}: \,
f(x) \geq 0 \text{ for every } x \in V_\re(g) \big\}.
\]

To study the boundary $\pt P_d(g)$ of $P_d(g)$, we need a characterization for it.
One would think if $f$ lies on $\pt P_d(g)$
then $f(x)$ vanishes somewhere on $V_\re(g)$. However, this is not always true.
For a counterexample, consider $f=x_1+x_2$ and $g=x_1^3+x_2^3-1$.
Clearly, $f$ is strictly positive on $V_\re(g)$, but it lies on $\pt P_1(g)$.
For any $\eps>0$ the polynomial $x_1+x_2-\eps$ is no longer nonnegative on $V_\re(g)$ because
\[
\inf_{ x \in V_\re(g) } \quad x_1+x_2 \quad = \quad 0.
\]
The reason is that $V_\re(g)$ is not compact.
We need other characterization in this case.

Let $V_\re^h(g)$ be the homogenization of $V_\re(g)$, that is,
\[
V_\re^h(g) = \left\{ \tilde{x}\in \re^{n+1}: \,
  g_1^h(\tilde{x}) = \cdots = g_m^h(\tilde{x}) = 0 \right\}.
\]
Clearly, if $f^h$ is nonnegative on $V_\re^h(g)$,
then $f$ is also nonnegative on $V_\re(g)$, but the reverse is not necessarily true.
For this purpose, we need a new condition.
We say the variety $V_\re^h(g)$ is {\it closed at $\infty$} if
\[
V_\re^h(g) \cap \{x_0 \geq 0 \} = \mbox{closure}
\left( V_\re^h(g) \cap \{x_0 > 0 \} \right).
\]
Define two constants
\begin{align} \label{def:dtgf}
\delta_g(f):= & \min_{ x \in V_\re(g) }  \quad f(x), \\
\delta_g^h(f):= & \min_{\tilde{x} \in V_\re^h(g): \|\tilde{x}\|_2=1, \, x_0 \geq 0 }
 \quad f^h(\tilde{x}).    \label{dth_gf}
\end{align}

The boundary $\pt P_d(g)$ is characterized as below.

\begin{prop}  \label{char-bdry:Pd(g)}
Let $g$ be given as above.
\bit
\item [(i)] If $V_\re(g)$ is compact, then
\[
\delta_g(f) > 0   \, \Leftrightarrow \, f \in int\big(P_d(g)\big),
\quad \mbox{ and } \quad
\delta_g(f) = 0  \, \Leftrightarrow \, f \in  \pt P_d(g).
\]

\item [(ii)] If $V_\re^h(g)$ is closed at $\infty$, then
\[
\delta_g^h(f) > 0   \, \Leftrightarrow \, f \in int\big(P_d(g)\big),
\quad \mbox{ and } \quad
\delta_g^h(f) = 0  \, \Leftrightarrow \, f \in  \pt P_d(g).
\]
\eit
\end{prop}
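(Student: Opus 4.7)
The plan is to characterize membership in $P_d(g)$ via the minima $\delta_g$ and $\delta_g^h$, and then separate the interior from the boundary by a continuity-plus-perturbation argument. For part (i), since $V_\re(g)$ is compact, $\delta_g(f)$ is attained and the equivalence $f\in P_d(g)\Leftrightarrow \delta_g(f)\geq 0$ is immediate from the definition. The map $f\mapsto \delta_g(f)$ is continuous in the coefficients of $f$ (a minimum of a jointly continuous function over a fixed compact set), so $\delta_g(f)>0$ persists under small perturbations, placing $f$ in $int(P_d(g))$. Conversely, if $\delta_g(f)=0$ with minimizer $u\in V_\re(g)$, then for any $\eps>0$ the polynomial $f-\eps$ satisfies $(f-\eps)(u)=-\eps<0$, so $f-\eps\not\in P_d(g)$ and $f\in \pt P_d(g)$.

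For part (ii), I will first verify the equivalence $f\in P_d(g)\Leftrightarrow \delta_g^h(f)\geq 0$. The forward direction is straightforward: given $u\in V_\re(g)$, the unit vector $\tilde u=(1,u)/\|(1,u)\|_2$ lies in $V_\re^h(g)$ with $x_0>0$, and $f^h(\tilde u)\geq 0$ rescales to $f(u)\geq 0$. For the reverse, split on $u_0$. If $u_0>0$, then $u/u_0\in V_\re(g)$ and $f^h(\tilde u)=u_0^d f(u/u_0)\geq 0$. If $u_0=0$, the closed-at-$\infty$ hypothesis supplies a sequence of unit vectors $\tilde x^{(k)}\in V_\re^h(g)\cap\{x_0>0\}$ converging to $\tilde u$, and continuity of $f^h$ yields $f^h(\tilde u)=\lim_k f^h(\tilde x^{(k)})\geq 0$.

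The interior/boundary characterization in (ii) then mirrors (i). Continuity of $\delta_g^h$ in the coefficients of $f$ gives $\delta_g^h(f)>0\Rightarrow f\in int(P_d(g))$. For the converse, assume $\delta_g^h(f)=0$ with minimizer $\tilde u$. If $u_0>0$, then $u/u_0\in V_\re(g)$ is a zero of $f$ and $f-\eps$ is negative there, as in (i). If $u_0=0$, then $(u_1,\ldots,u_n)\neq 0$ and I take the degree-$d$ perturbation $q(x)=(u_1 x_1+\cdots+u_n x_n)^d$, whose homogenization satisfies $q^h(\tilde u)=(u_1^2+\cdots+u_n^2)^d>0$. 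Since $f^h(\tilde u)=0$, this yields $(f-\eps q)^h(\tilde u)=-\eps q^h(\tilde u)<0$, so by the equivalence just proved, $f-\eps q\not\in P_d(g)$. As $\eps>0$ is arbitrary, $f\in \pt P_d(g)$.

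The main obstacle will be the $u_0=0$ case in (ii): the witness of boundary behaviour lives at infinity, so the perturbation must have nonzero top-degree part along the direction $u$, and one must use the closed-at-$\infty$ assumption to translate the projective witness into an actual failure of nonnegativity on the affine variety $V_\re(g)$. A subsidiary bookkeeping issue is that $q$ is chosen of degree exactly $d$ so that the relation $(f-\eps q)^h=f^h-\eps q^h$ holds cleanly in the ambient space $\re[x]_{\leq d}$.
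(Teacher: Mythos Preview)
Your proof is correct and follows essentially the same approach as the paper: establish the equivalence $f\in P_d(g)\Leftrightarrow \delta_g^h(f)\geq 0$ using the closed-at-$\infty$ hypothesis, then use continuity of $\delta_g^h$ for the interior and a perturbation for the boundary. The paper's own proof is considerably terser---it simply asserts that when $\delta_g^h(f)=0$ one ``can find $p\in\re[x]_{\leq d}$ of arbitrarily small coefficients such that $\delta_g^h(f+p)<0$''---whereas you supply the explicit witness $q(x)=(u_1x_1+\cdots+u_nx_n)^d$ in the $u_0=0$ case, which is a genuine addition of content. One cosmetic remark: your ``forward'' and ``reverse'' labels are swapped relative to the displayed equivalence $f\in P_d(g)\Leftrightarrow \delta_g^h(f)\geq 0$, though the arguments themselves are in the right places.
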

\begin{proof}
Part (i) is quite clear. We prove part (ii).
For any $\tilde{u} \in V_\re^h(g)$ with $u_0 \geq 0$,
we can find a sequence $(t_k, w_k) \in V_\re^h(g)$ with every $t_k>0$
approaching $\tilde{u}$. Note that $w_k/t_k \in V_\re(g)$.
So, if $f \in P_d(g)$, then
\[
f^h(\tilde{u}) = \lim_{ k \to \infty} f^h(t_k, w_k) = \lim_{ k \to \infty} t_k^d f(w_k/t_k) \geq 0,
\]
and we have $\delta_g^h(f) \geq 0$.
On the other hand, if $\delta_g^h(f) \geq 0$, then for every $v\in V_\re(g)$
\[
f(v) = f^h(1,v)  = (1+\|v\|_2^2)^{d/2} f^h\left((1,v)/(1+\|v\|_2^2)^{1/2}\right)
\geq (1+\|v\|_2^2)^{d/2} \delta_g^h(f) \geq 0,
\]
and we get $f\in P_d(g)$.
The above implies  $\delta_g^h(f) \geq 0$ if and only if $f\in P_d(g)$.

By definition, $\delta_g^h(f)$ is the minimum of a polynomial function over a compact set.
If $\delta_g^h(f) >0$, then in a small neighborhood $\mc{O}$ of $f$
we have $\delta_g^h(p) >0$ for every $ p \in \mc{O}$, that is,
$f$ lies in the interior of $P_d(g)$.
If $\delta_g^h(f) =0$, then we can find $p\in \re[x]_{\leq d}$
of arbitrarily small coefficients such that $\delta_g^h(f+p) <0$, that is, $f\in \pt P_d(g)$.
\end{proof}

We would like to remark that not every $V_\re^h(g)$ is closed at $\infty$,
and even if $V_\re(g)$ is compact $V_\re^h(g)$ might still not be closed at $\infty$.

\begin{exm}
(i) Let $g=x_1^2(x_1-x_2)-1$ and $f=x_1-x_2+1$.
The polynomial $f$ is strictly positive on the variety $V_\re(g)$,
but $f^h = x_1-x_2+x_0$ is not nonnegative on
\[
V_\re^h(g) = \left\{ (x_0,x_1,x_2) \in \re^3:  x_1^2(x_1-x_2)- x_0^3 =0 \right \}.
\]
This is because $(0,0,1) \in V_\re^h(g)$ while $f^h(0,0,1)<0$.
So $V_\re^h(g)$ is not closed at $\infty$.

\smallskip
\noindent
(ii) Let $g=x_1^2(1-x_1^2-x_2^2)-x_2^2$. The variety $V_\re(g)$ is compact. Its homogenization is
\[
V_\re^h(g) = \left\{ \tilde{x}:  x_1^2(x_0^2-x_1^2-x_2^2)-x_0^2x_2^2 =0 \right\}.
\]
However, $V_\re^h(g)$ is not closed at $\infty$. Otherwise, for every
$\tilde{u} \in V_\re^h(g) \cap \{x_0 = 0 \}$ we have
\[
\tilde{u} = \lim_{t_k >0, \, t_k \to 0} t_k (1, v_k) \quad \mbox{ for some } \quad v_k \in V_\re(g).
\]
This implies $V_\re^h(g) \cap \{x_0 = 0 \}$ is compact, which is clearly false.
\qed
\end{exm}

Now we study the boundary of the cone $P_d(g)$.

\begin{theorem}  \label{psd-var:bdry}
Let $g=(g_1,\ldots,g_m)$ be given as above, and $\deg(g_i) = d_i$.
Suppose $m\leq n$.

\bit

\item [(i)] If $V_\re(g) \ne \emptyset$, and either $V_\re(g)$ is compact
or $V_\re^h(g)$ is closed at $\infty$,
then the boundary $\pt P_d(g)$ lies on the hypersurface
\[
\mathcal{E}_d(g) = \{f \in \re[x]_{\leq d}: \Delta(f,g_1,\ldots,g_m) = 0 \}.
\]

\item [(ii)] If the projective variety $V_\P(g_1^h,\ldots, g_m^h)$ is nonsingular,
the degree of $\mc{E}_d(g)$ is
\be \label{eq-deg:bdrP(g)}
\left( \prod_{i=1}^m d_i \right) \cdot
S_{n-m}\Big(d-1,d-1,d_1-1, \ldots, d_m-1\Big).
\ee
Otherwise, the above is only an upper bound.

\item [(iii)] The polynomial $\Delta(f,g_1,\ldots,g_m)$ is identically zero in $f$
if and only if the projective variety $V_\P(g_1^h,\ldots, g_m^h)$
has a positive dimensional singular locus.

\eit

\end{theorem}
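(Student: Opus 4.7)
The plan is to reduce all three parts to Theorem~\ref{thm-deg:dis-itsc} applied to the tuple $(f^h, g_1^h, \ldots, g_m^h)$. Part (i) is the substantive one: it requires translating the analytic condition ``$f \in \pt P_d(g)$'' into the algebraic condition ``$(f^h, g_1^h, \ldots, g_m^h) \in W(d, d_1, \ldots, d_m)$'' via first order optimality. Parts (ii) and (iii) then follow by directly invoking parts (b) and (c) of Theorem~\ref{thm-deg:dis-itsc}.

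For part (i), I start from Proposition~\ref{char-bdry:Pd(g)}: under either hypothesis, $f \in \pt P_d(g)$ forces $\delta_g(f)=0$ or $\delta_g^h(f)=0$. In the compact case, let $u \in V_\re(g)$ be a minimizer with $f(u)=0$. If LICQ holds at $u$, Lagrange multipliers give $\lambda_i$ with $\nabla_x f(u) = \sum_i \lambda_i \nabla_x g_i(u)$; if LICQ fails, then $u$ is a singular point of $V_\re(g)$, so $J_{(g_1,\ldots,g_m)}(u)$ already has rank at most $m-1$. In either case, lifting to $\tilde{u}=(1,u)$ and using Euler's formula~\reff{fm:euler} to compute the $\pt/\pt x_0$ entries of each gradient, the same linear relation persists, so $J_f(\tilde{u})$ has rank at most $m$; hence $(f^h,g_1^h,\ldots,g_m^h)\in W(d,d_1,\ldots,d_m)$ and $\Delta(f,g_1,\ldots,g_m)=0$. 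The case where only $V_\re^h(g)$ is closed at $\infty$ is handled analogously via the compact minimization~\reff{dth_gf}: a KKT point $\tilde{u}$ satisfies $\nabla_{\tilde{x}} f^h(\tilde{u}) = \sum_i \mu_i \nabla_{\tilde{x}} g_i^h(\tilde{u}) + \nu \tilde{u}$, and premultiplying by $\tilde{u}^T$ together with Euler's formula and $f^h(\tilde{u})=g_i^h(\tilde{u})=0$ forces $\nu = 0$, again giving the rank deficiency.

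For part (ii), I apply Theorem~\ref{thm-deg:dis-itsc}(b) with $f_0 = f^h$ of degree $d$ and $f_i = g_i^h$ of degree $d_i$: setting $k=0$ in~\reff{deg:DisSigVar} gives precisely the formula~\reff{eq-deg:bdrP(g)}. When $V_\P(g_1^h,\ldots,g_m^h)$ is smooth, the transverse intersection argument in that proof is exact, so the degree is as claimed; when it is singular, $\Delta(f,g_1,\ldots,g_m)$ may vanish identically or have strictly smaller degree in $f$ (by part (iii) of the present theorem), so the formula becomes only an upper bound. Part (iii) is then an immediate corollary of Theorem~\ref{thm-deg:dis-itsc}(c), using that the map $f \mapsto f^h$ is a linear bijection between $\re[x]_{\leq d}$ and $\re[\tilde{x}]_d$, so identical vanishing in $f$ is the same as identical vanishing in the first slot of $\Delta(f^h,g_1^h,\ldots,g_m^h)$.

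The main obstacle will be the bookkeeping in part (i): verifying that the KKT-derived linear dependence at the affine minimizer $u$ really transfers to a rank deficiency of the homogenized Jacobian at $\tilde{u}$, and handling the cases where LICQ fails or where the homogenized minimizer lies on the hyperplane at infinity $\{x_0=0\}$. The closedness-at-$\infty$ hypothesis is precisely what guarantees that such a limiting $\tilde{u}$ is an actual point of $V_\P(g^h)$ with the expected first order behavior, rather than a spurious boundary artifact of the nonhomogenized problem.
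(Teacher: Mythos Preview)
Your proposal is correct and follows essentially the same route as the paper: first-order optimality at a minimizer to produce a singular point of the homogenized system, then direct appeal to Theorem~\ref{thm-deg:dis-itsc} for parts (ii) and (iii).

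Two minor simplifications the paper makes that you could adopt. First, instead of splitting into LICQ/non-LICQ cases and then lifting via Euler's formula, the paper invokes the Fritz--John condition directly: at the minimizer there always exist $(\mu_0,\mu_1,\ldots,\mu_m)\neq 0$ with $\mu_0\nabla_{\tilde x}f^h(\tilde u)+\sum_i\mu_i\nabla_{\tilde x}g_i^h(\tilde u)=0$, and this is exactly the rank-deficiency of $J_f(\tilde u)$ needed for $\Delta=0$, with no case split or lifting computation. Second, in the closed-at-$\infty$ case the paper avoids the sphere constraint altogether: since $V_\re^h(g)$ is a cone and $f^h$ is homogeneous and nonnegative on it, the point $\tilde u$ with $f^h(\tilde u)=0$ is already a global minimizer of $f^h$ on $V_\re^h(g)$, so Fritz--John applies with only the constraints $g_i^h=0$ and there is no spurious multiplier $\nu$ to eliminate. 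Your argument that $\nu=0$ via Euler's formula is correct, but unnecessary.
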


\begin{proof}
(i) We first consider the case that $V_\re^h(g)$ is closed at $\infty$.
Let $f(x) \in \pt P_d(g)$. By Proposition~\ref{char-bdry:Pd(g)},
we know $f^h$ is nonnegative on $V_\re^h(g)$
and vanishes at some $ 0 \ne \tilde{u} \in V_\re^h(g)$.
So $\tilde{u}$ is a minimizer of $f^h(\tilde{x})$ on $V_\re^h(g)$.
By Fritz-John optimality condition (see Sec. 3.3.5 in \cite{Bsks}),
there exists $(\mu_0, \mu_1,\ldots,\mu_m)\ne 0$ satisfying
\[
\baray{c}
\mu_0 \nabla_{\tilde{x}} f_0(\tilde{u}) + \mu_1 \nabla_{\tilde{x}} g_1(\tilde{u})
+ \cdots + \mu_m \nabla_{\tilde{x}} g_m(\tilde{u}) = 0, \\
f(\tilde{u}) = g_1(\tilde{u}) = \cdots = g_m(\tilde{u})=0.
\earay
\]
By relation \reff{dis:f0m}, we know $\Delta(f,g_1,\ldots,g_m) = 0$.

The proof for the case that $V_\re(g)$ is compact
is almost the same as the above,
and is omitted here.

(ii) When  $V_\P(g_1^h,\ldots, g_m^h)$ is nonsingular,
from the proof of part b) in Theorem~\ref{thm-deg:dis-itsc},
we know the degree of $\Delta(f,g_1,\ldots,g_m)$ in $f$ is given by \reff{eq-deg:bdrP(g)}.
When  $V_\P(g_1^h,\ldots, g_m^h)$ is singular, the formula in \reff{eq-deg:bdrP(g)}
is only an upper bound by perturbing the coefficients of $g_1,\ldots,g_m$.

(iii) This immediately follows part c) of Theorem~\ref{thm-deg:dis-itsc}.
\end{proof}

We have seen that there is no log-polynomial type barrier function
for the cone $P_d(\re^n)$ when $d>2$ and $n\geq 1$.
There is a similar result for $P_d(g)$.

\begin{theorem}
Suppose $V_\re(g)$ is nonempty, either $V_\re(g)$ is compact or $V_\re^h(g)$ is closed at $\infty$,
$V_\P(g^h)$ has positive dimension, and $d>2$ is even.
If the discriminant $\Delta(f,g_1,\ldots,g_m)$ is irreducible in $f$ over $\cpx$,
then there is no polynomial $\varphi(f)$ satisfying
\bit

\item $\varphi(f)>0$ whenever $f$ lies in the interior of $P_d(g)$, and

\item $\varphi(f)=0$ whenever $f$ lies on the boundary of $P_d(g)$.

\eit
Therefore, $-\log \varphi(f)$ can not be a barrier function
for the cone $P_d(g)$ when we require $\varphi(f)$ to be a polynomial,
and $P_d(g)$ is not representable by LMI.
\end{theorem}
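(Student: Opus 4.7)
The plan is to imitate the proof of Theorem~\ref{psd:nobarfun}. The first step is to identify the Zariski closure of $\pt P_d(g)$ with the whole discriminantal hypersurface $\mc{E}_d(g) = \{\Delta(f,g_1,\ldots,g_m) = 0\}$. By Theorem~\ref{psd-var:bdry}(i) we have $\pt P_d(g) \subseteq \mc{E}_d(g)$. Since $P_d(g)$ is a full-dimensional closed convex cone in $\re[x]_{\leq d}$ (it contains any strictly positive form in its interior), its boundary is a semialgebraic set of real codimension one, so its complex Zariski closure has complex codimension at most one. Because $\Delta(f,g_1,\ldots,g_m)$ is irreducible in $f$ over $\cpx$ by hypothesis, $\mc{E}_d(g)$ is an irreducible hypersurface, and the closure must coincide with $\mc{E}_d(g)$.

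Assume for contradiction that a polynomial $\varphi(f)$ with the two listed properties exists. Then $\varphi$ vanishes on $\pt P_d(g)$, hence on its Zariski closure, which is $\mc{E}_d(g)$. By Hilbert's Nullstellensatz (Theorem~\ref{HilNull}) there exist an integer $k>0$ and a polynomial $p(f)$ with $\varphi(f)^k = \Delta(f,g_1,\ldots,g_m)\cdot p(f)$. The irreducibility of $\Delta$ in $f$ then forces $\Delta \mid \varphi$, so $\varphi$ vanishes at every $f$ with $\Delta(f,g_1,\ldots,g_m)=0$.

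The main step is to exhibit $\hat{f} \in int(P_d(g))$ with $\Delta(\hat{f}, g_1, \ldots, g_m) = 0$, which gives the desired contradiction $\varphi(\hat{f})=0$. Take $\hat{f}(x) = (1 + x_1^2 + \cdots + x_n^2)^{d/2}$, a polynomial of degree $d$ satisfying $\hat{f}(x) \geq 1$ on $\re^n$, so $\delta_g(\hat{f}) \geq 1$ and $\delta_g^h(\hat{f}) = 1 > 0$; by Proposition~\ref{char-bdry:Pd(g)} this places $\hat{f}$ in the interior of $P_d(g)$. Its homogenization $\hat{f}^h = (x_0^2 + x_1^2 + \cdots + x_n^2)^{d/2}$ vanishes on the complex quadric $Q = V_\P(x_0^2 + \cdots + x_n^2) \subset \P^n$, and since $d>2$ the gradient $\nabla_{\tilde x} \hat{f}^h$ vanishes on $Q$ as well. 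Because $V_\P(g_1^h, \ldots, g_m^h)$ has positive dimension and $Q$ is a hypersurface in $\P^n$, a standard dimension count (B\'{e}zout's theorem) yields a point $\tilde u \in V_\P(g_1^h,\ldots,g_m^h) \cap Q$. At such $\tilde u$ the Jacobian $J_{(\hat{f}^h, g_1^h, \ldots, g_m^h)}(\tilde u)$ has its first column equal to zero, so its rank is at most $m$, and \reff{dis:f0m} gives $\Delta(\hat{f}, g_1, \ldots, g_m) = 0$.

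The main obstacle is precisely the construction in the third step: one needs a form that is strictly positive on $V_\re(g)$ but nevertheless has a singular complex zero lying on $V_\P(g^h)$, and this is where the hypotheses $d>2$ and $\dim V_\P(g^h) \geq 1$ are essential. Once the contradiction is reached, the non-existence of a $-\log\varphi$ barrier with $\varphi$ a polynomial, and the non-representability of $P_d(g)$ by an LMI, follow exactly as in the second part of Theorem~\ref{psd:nobarfun}, since any LMI representation would produce such a $\varphi$ via $\det L(f)$.
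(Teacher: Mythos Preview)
Your proposal is correct and follows essentially the same route as the paper: both argue by contradiction, use Theorem~\ref{psd-var:bdry} plus irreducibility to identify the Zariski closure of $\pt P_d(g)$ with the discriminantal hypersurface, apply Hilbert's Nullstellensatz to factor $\varphi^k$, and then exhibit the same interior point $\hat{f}(x)=(1+\|x\|_2^2)^{d/2}$ whose homogenization has a singular zero on $V_\P(g^h)$ via B\'ezout, forcing $\Delta(\hat f,g_1,\ldots,g_m)=0$. Your write-up is in fact slightly more explicit than the paper's in justifying why the Zariski closure of $\pt P_d(g)$ has codimension one and why $\hat f\in int(P_d(g))$.
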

\begin{proof}
We prove the first part by contradiction. Suppose such a $\varphi$ exists.
By Theorem~\ref{psd-var:bdry}, we know $\pt P_d(g)$
lies on the hypersurface $\Delta(f,g_1,\ldots,g_m)=0$.
Since $\Delta(f,g_1,\ldots,g_m)$ is irreducible in $f$,
the hypersurface $\Delta(f,g_1,\ldots,g_m)=0$ is irreducible and
equals the Zariski closure of $\pt P_d(g)$
(it is contained in some hypersurface).
Hence, the hypersurface $\varphi(f)=0$ contains $\Delta(f,g_1,\ldots,g_m)=0$,
and $\varphi(f)$ vanishes whenever $\Delta(f,g_1,\ldots,g_m)=0$.
By Hilbert's Nullstenllensatz (see Theorem~\ref{HilNull}),
there exist an integer $k>0$ and a polynomial $p(f)$ such that
\[
\varphi(f)^k = \Delta(f,g_1,\ldots,g_m) \cdot p(f).
\]
Set $\hat{f}(x) = (1+x_1^2+\cdots + x_n^2)^{d/2}$,
then $\hat{f}^h(x) = (x_0^2+x_1^2+\cdots + x_n^2)^{d/2}$.
Clearly, $\hat{f}$ lies in the interior of $P_d(g)$.
However, since $V_\P(g^h)$ has positive dimension, we know
\[
V_\P(\hat{f}^h, g^h) \,=\,
\left\{ \tilde{x} \in \P^n:  x_0^2+x_1^2+\cdots + x_n^2 =0 \right\} \cap V(g^h) \ne \emptyset
\]
by B\'{e}zout's theorem. For any $\tilde{u} \in V_\P(\hat{f}^h, g^h)$,
we have $\nabla_{\tilde{x}} \hat{f}^h(\tilde{u})=0$ ($d>2$)
which results in $\Delta(\hat{f},g_1,\ldots,g_m)=0$.
So $\varphi(\hat{f})=0$, which contradicts the first item.

The second part is a consequence of the first part,
as in the proof of Theorem~\ref{psd:nobarfun}.
\end{proof}

\subsection{Computing the discriminantal variety $\Delta(f,g_1,\ldots,g_m)=0$}
\label{subsec:dis-fg1m}

Now we discuss the connection between $\Delta(f,g_1,\ldots,g_m)$ and
the discriminant of the Lagrangian polynomial in $(x,\lmd)$
\[
L(x,\lmd) =   f(x) + \sum_{i=1}^k \lmd_i g_i(x).
\]
When $V_\re(g)$ is compact, $f \in \pt P_d(g)$ if and only if
$\delta_g(f)=0$, i.e., there exists $u \in V_{\re}(g)$
such that $f(u)=0$ and $u$ is a minimizer of $f$ on $V_{\re}(g)$.
So, if $f(x) \in \pt P_d(g)$ and $V_\re(g)$ is nonsingular at $u$,
the Karush-Kuhn-Tucker (KKT) condition (see Sec.~3.3 in \cite{Bsks})holds,
and there exists $\mu = (\mu_1,\ldots,\mu_m)$ satisfying
\[
\nabla_{x} f(u) +
\overset{m}{\underset{i=1}{\sum} } \mu_i \nabla_{x} g_i(u) = 0, \quad
g_1(u) = \cdots = g_m(u)=0.
\]
The above is equivalent to that
$(u,\mu)$ is a critical zero point of $L(x,\lmd)$, that is,
\[
\nabla_{x,\lmd}L(u,\mu) =0, \quad  L(u,\mu) = 0.
\]
Hence, we have $\Delta(L) = 0$. Therefore, the hypersurface $\Delta(f,g_1,\ldots,g_m)=0 $
would be possibly determined via investigating $\Delta(L)=0$.
To the best knowledge of the author, no general procedure is known in computing
the discriminant of type $\Delta(f,g_1,\ldots,g_m)$.
Though there exist systemic methods for evaluating $\Delta(L)$,
its computation and formula would be too complicated to be practical,
as we have seen in the preceding section.
In the following, we propose a different approach using elimination.

Suppose $f = f(x;p)$ is a polynomial in $x$ whose coefficients
are also polynomial in a parameter $p=(a,b,c,\ldots)$
over the rational field, i.e., from the ring $\Q[p]$.
So, if $f(x;p) \in \pt P_d(g)$ and $V_\re(g)$ is a nonsingular compact set,
then $f$ satisfies the over-determined polynomial system in $(x,\lmd)$
\be \label{cond:KKT}
\left.
\baray{r}
\nabla_{x} f(x) +
\overset{m}{ \underset{i=1}{\sum} } \lmd_i \nabla_{x} g_i(x) = 0 \\
f(x) = g_1(x) = \cdots = g_m(x)=0
\earay \right\}.
\ee
The equation that $p$ satisfies would be determined by eliminating $(x,\lmd)$ in the above.
Let $\varphi(p)=0$ be the polynomial equation obtained by eliminating $(x,\lmd)$ in \reff{cond:KKT}.
So, if $p$ satisfies $\Delta(f,g_1,\ldots,g_m)=0$, then $\varphi(p) = 0$.
Computing $\varphi(p)$ would be done by using {\it elim} in ${\it Singular}$ \cite{Sing}.
We illustrate this in the below.

\begin{exm}
Consider the polynomials parameterized as
\[
f= x_1^2+ax_1x_2+bx_1+cx_2+d,
\]
and $K=\{ x_1^2+x_2^2=1\}$ is a circle.
The polynomial $\varphi(a,b,c)$ obtained by eliminating $(x,\lmd)$ in \reff{cond:KKT} is
\[
\baray{l}
a^6-3a^4b^2+3a^2b^4-b^6-3a^4c^2-21a^2b^2c^2-3b^4c^2+3a^2c^4-3b^2c^4-c^6\\
+36a^3bcd+18ab^3cd+18abc^3d-8a^4d^2-20a^2b^2d^2+b^4d^2-20a^2c^2d^2\\
+2b^2c^2d^2+c^4d^2-16abcd^3+16a^2d^4+18a^3bc-18ab^3c+36abc^3-8a^4d\\
-2a^2b^2d+10b^4d-38a^2c^2d+2b^2c^2d-8c^4d-24abcd^2+32a^2d^3-8b^2d^3\\
+8c^2d^3+a^4-2a^2b^2+b^4-20a^2c^2+20b^2c^2-8c^4+24abcd+8a^2d^2-32b^2d^2\\
-8c^2d^2+16d^4+16abc-8a^2d-8b^2d-32c^2d+32d^3-16c^2+16d^2.
\earay
\]
It is a polynomial of degree $6$ in $4$ variables.
The set $\{(a,b,c): f \in \pt P_2(K)\}$
lies on the surface $\varphi(a,b,c) = 0$.
\qed
\end{exm}

\begin{exm}  \label{psd-deg4:2circ}
(i) Consider the polynomials parameterized as
\[
f=x_1^4+ax_1^3x_2+bx_1x_2^3+c,
\]
and $K=\{ x_1^2+x_2^2=1\}$ is a circle.
The polynomial $\varphi(a,b,c)$ obtained by eliminating $(x,\lmd)$ in \reff{cond:KKT} is
\[
\baray{l}
4a^3b^3+27a^4c^2-36a^3bc^2+2a^2b^2c^2-36ab^3c^2+27b^4c^2-256a^2c^4\\
+512abc^4-256b^2c^4+6a^2b^2c-36ab^3c+54b^4c-288a^2c^3+704abc^3\\
-544b^2c^3+27b^4+192abc^2-288b^2c^2-256c^4-256c^3.
\earay
\]
The surface $\varphi(a,b,c)=0$
is drawn in the left picture in Figure~\ref{fig:deg4-2circ}.
It contains the set $\{(a,b,c): f \in \pt P_4(K)\}$.

\begin{figure}[htb]
\centering
\btab{cc}
\includegraphics[width=.45\textwidth]{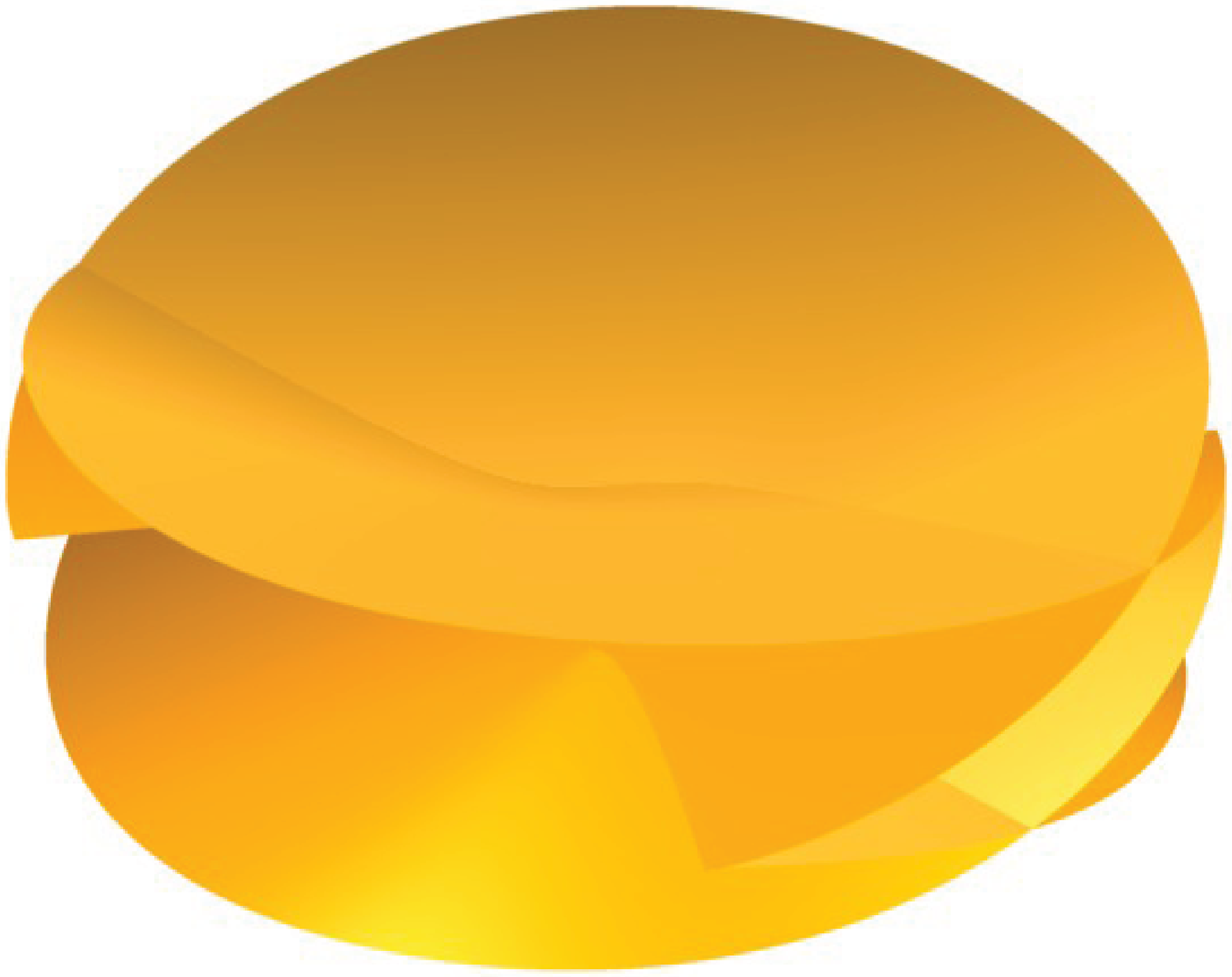} &
\includegraphics[width=.45\textwidth]{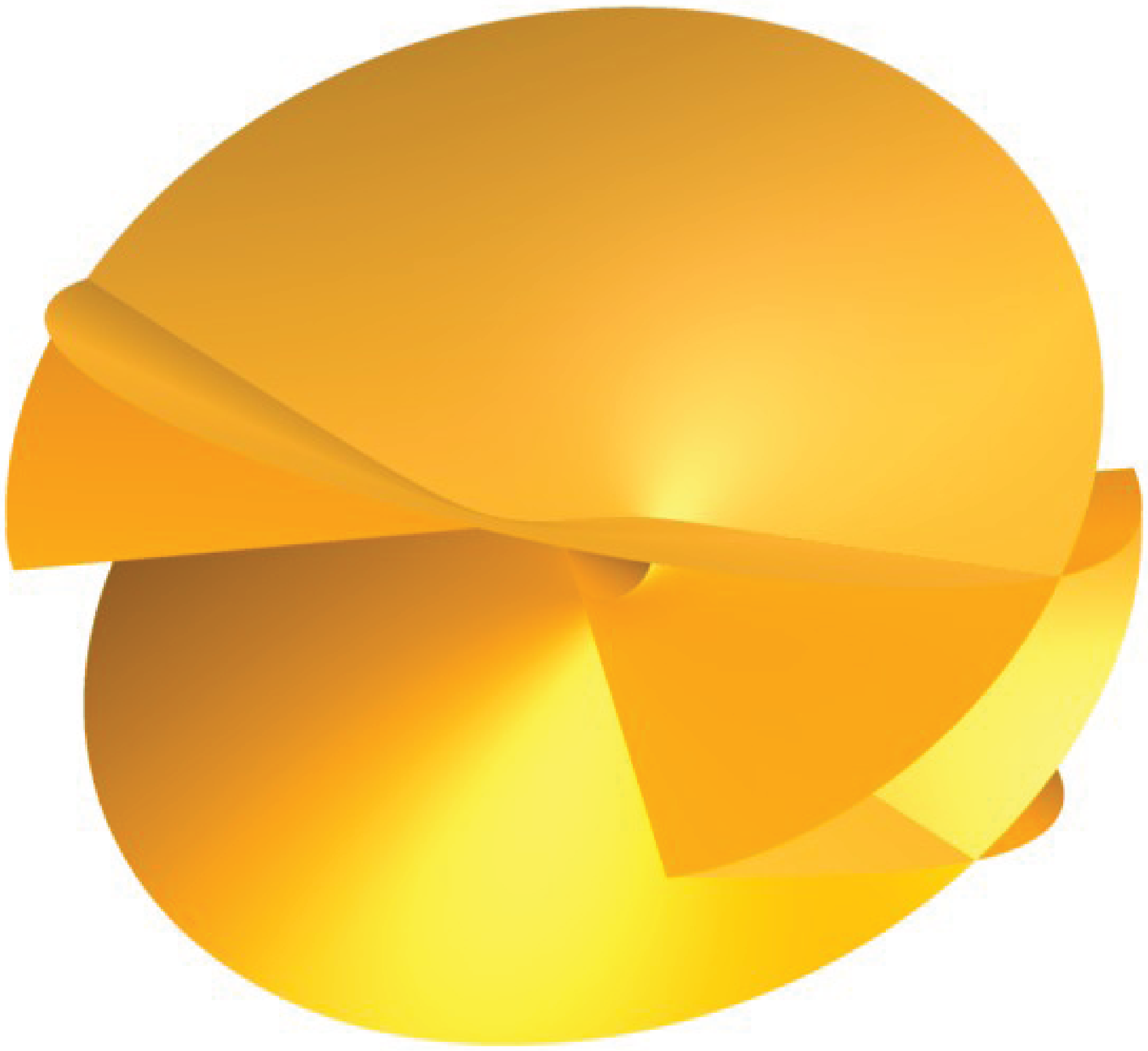}
\etab
\caption{ The pictures of surfaces $\varphi(a,b,c)=0$ in Example~\ref{psd-deg4:2circ}.
The left is for (i), and the right is for (ii).}
\label{fig:deg4-2circ}
\end{figure}

\smallskip
\noindent
(ii) Consider the polynomials parameterized as
\[
f =x_1^4+ax_1^3x_2+bx_1x_2^3+c,
\]
and $K=\{ x_1^4+x_2^4=1\}$ is a circle defined in $4$-norm.
The polynomial $\varphi(a,b,c)$ obtained by eliminating $(x,\lmd)$ in \reff{cond:KKT} is
\[
\baray{c}
4a^3b^3+27a^4c^2+6a^2b^2c^2+27b^4c^2+192abc^4-256c^6+6a^2b^2c\\
+54b^4c+384abc^3-768c^5+27b^4+192abc^2-768c^4-256c^3.
\earay
\]
The surface $\varphi(a,b,c)=0$
is drawn in the right picture in Figure~\ref{fig:deg4-2circ}.
It contains the set $\{(a,b,c): f \in \pt P_4(K)\}$.

The surfaces in Figure~\ref{fig:deg4-2circ} are drawn by Labs' software {\it Surfex}
which is downloaded from the website \url{www.surfex.algebraicsurface.net}.
\qed
\end{exm}

\subsection{Resolution of singularities}

In Theorem~\ref{psd-var:bdry}, we know if
the projective variety $V_\P(g^h)$ has a positive dimensional singular locus,
then $\Delta(f,g_1,\ldots,g_m)$ is identically zero in $f$
and $\Delta(f,g_1,\ldots,g_m) = 0$ defines the whole space $\re^n$.
This is not what we want, because the boundary $\pt P_d(g)$ usually has codimension one.
To study $\pt P_d(g)$, we need to resolve the singularities of $V_\P(g^h)$.
By Hironaka's result (see Theorem 17.23 in Harris's book \cite{Ha}),
there exist a smooth projective variety $U \subset \P^n$ and a rational mapping
\[
\phi: \quad U \longrightarrow  V_\P(g^h)
\]
such that $\phi(U)$ is dense in $V_\P(g^h)$.
Thus, $f\in P_d(g)$ if and only if $f^h(\phi)$ is nonnegative on $U$.
Consequently, the boundary of $P_d(g)$ can be investigated
through studying forms nonnegative on $U$.
We illustrate how to do this as below.

\begin{exm}
Consider the variety $V(g) \subset \cpx^3$ where
\[
g(x) \quad = \quad \left((x_1-1)^2+x_2^2-1\right)^3-x_3^5.
\]
Both $V(g)$ and $V_\P(g^h)$ have positive dimensional singular locus. Let
\[
U=\{ y \in \P^3: y_1^6+y_2^6-y_0y_3^5-y_0^6=0\}.
\]
It is a smooth variety. Let $\phi$ be the mapping:
\[
\phi: \quad \tilde{y}=(y_0,y_1,y_2,y_3) \quad \longmapsto \quad
\tilde{x}=(y_0^3, y_0^3+y_1^3, y_2^3, y_3^3).
\]
Then $\phi(U)=V_\P(g^h)$. So, $f(x)\in P_d(g)$ if and only if $f^h(\phi) \in P_{3d}(q)$,
and $f(x)\in \pt P_d(g)$ if and only if $f^h(\phi) \in \pt P_{3d}(q)$.
Here $q=y_1^6+y_2^6-y_0y_3^5-y_0^6$.
\qed
\end{exm}

However, we would like to remark that such $\phi$ and $U$
are typically quite difficult to find. This issue is beyond the scope of this paper.

\section{Polynomials nonnegative on a semialgebraic set} \label{sec:semi-alg}
\setcounter{equation}{0}

This section studies the cone $P_d(K)$ when $K$ is a general semialgebraic set in $\re^n$.
Consider $K$ is given as
\[
K=\{x \in \re^n: \, (g_1(x), \ldots, g_m(x)) =  0, (p_1(x),\ldots,p_t(x)) \geq 0\}.
\]
Here the $g_i$ and $p_j$ are all polynomials in $x$. Recall that
\[
P_d(K) = \{ f \in \re[x]_{\leq d}: f(x) \geq 0 \quad \forall \, x \in K \}.
\]
We are interested in the algebraic geometric properties of its boundary $\pt P_d(K)$.
Typically, it is a union of hypersurfaces.

We begin with the characterization of the boundary $\pt P_d(K)$.
Like the case of $K$ being a real algebraic variety,
a polynomial positive on $K$ may not lie in the interior of $P_d(K)$.
Let $K^h$ be the projectivization of $K$ which is defined as
\[
K^h=\left\{\tilde{x} \in \re^{n+1}: \,
\big( g_1^h(\tilde{x}), \ldots, g_m^h(\tilde{x}) \big) = 0,
\big( p_1^h(\tilde{x}), \ldots, p_t^h(\tilde{x}) \big) \geq 0\right\}.
\]
Define two constants
\begin{align}
\delta_K(f) & \quad = \quad \min_{ x\in K} \quad  f(x),   \label{df:dtK} \\
\delta_K^h(f) & \quad = \quad \min_{\tilde{x} \in K^h: \|\tilde{x}\|_2=1, x_0 \geq 0}
\quad  f^h(\tilde{x}).  \label{dt-Kh(f)}
\end{align}
Similarly, we say $K^h$ is {\it closed at $\infty$ } if
\[
K^h \cap \{x_0 \geq 0 \} = \mbox{closure}
\left( K^h \cap \{x_0 > 0 \} \right).
\]
We would like to remark that the definitions of $K^h$ and $\delta_K^h(f)$
depend on the defining polynomials of $K$ that are usually not unique.
So in the places where $K^h$ or $\delta_K^h(f)$ appears,
we usually assume the defining polynomials of $K$ are clear from the context.

The interior and boundary of the cone $P_d(K)$
are characterized in the proposition below,
whose proof is almost the same as for Proposition~\ref{char-bdry:Pd(g)}.

\begin{prop}  \label{char:bdrPd(K)}
Let $K$ be given as above.
\bit

\item [(i)] If $K$ is compact, then
\[
\delta_K(f) > 0   \, \Leftrightarrow \, f \in int\big(P_d(K)\big),
\quad \mbox{ and } \quad
\delta_K(f) = 0  \, \Leftrightarrow \, f \in  \pt P_d(K).
\]

\item [(ii)] If $K^h$ is closed at $\infty$, then
\[
\delta_K^h(f) > 0   \, \Leftrightarrow \, f \in int\big(P_d(K)\big),
\quad \mbox{ and } \quad
\delta_K^h(f) = 0  \, \Leftrightarrow \, f \in  \pt P_d(K).
\]
\eit
\end{prop}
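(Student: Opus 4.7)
The plan is to follow the pattern of Proposition \ref{char-bdry:Pd(g)} and adapt it to accommodate the inequality constraints defining $K$. The key observation is that $\delta_K(f)$ and $\delta_K^h(f)$ are both minima of a polynomial (in the coefficients of $f$) over a compact set, hence continuous functions of $f$, from which the interior/boundary dichotomy follows once we identify the sign of $\delta_K(f)$ (resp. $\delta_K^h(f)$) with membership in $P_d(K)$.

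For part (i), the identification is immediate: $f(x) \geq 0$ for every $x \in K$ is equivalent to $\delta_K(f) \geq 0$ by definition, so the continuous function $\delta_K(\cdot)$ is nonnegative precisely on $P_d(K)$. Strict positivity is preserved under small coefficient perturbations, giving $\{\delta_K(f) > 0\} \subseteq \mathrm{int}(P_d(K))$; and if $\delta_K(f) = 0$, subtracting any $\varepsilon > 0$ produces $f - \varepsilon \notin P_d(K)$, so $f \in \pt P_d(K)$. The reverse inclusions follow from $\mathrm{int}(P_d(K)) \sqcup \pt P_d(K)$ partitioning $P_d(K)$.

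For part (ii), the nontrivial step is showing $\delta_K^h(f) \geq 0 \Leftrightarrow f \in P_d(K)$. For the forward direction, given $v \in K$, the point $(1,v)$ lies in $K^h$; normalizing to $(1,v)/\sqrt{1+\|v\|_2^2}$ keeps $x_0 > 0$, and homogeneity of $f^h$ yields
\[
f(v) \,=\, (1+\|v\|_2^2)^{d/2}\, f^h\!\left((1,v)/\sqrt{1+\|v\|_2^2}\right) \,\geq\, (1+\|v\|_2^2)^{d/2}\, \delta_K^h(f).
\]
For the reverse, take $\tilde{u} \in K^h$ with $\|\tilde{u}\|_2 = 1$ and $u_0 \geq 0$; by the closed-at-$\infty$ hypothesis, $\tilde{u} = \lim_k (t_k, w_k)$ with $(t_k, w_k) \in K^h$ and $t_k > 0$. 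Since $t_k > 0$, the equations $g_i^h(t_k,w_k) = 0$ and inequalities $p_j^h(t_k,w_k) \geq 0$ rescale (divide by suitable positive powers of $t_k$) to give $g_i(w_k/t_k) = 0$ and $p_j(w_k/t_k) \geq 0$, i.e. $w_k/t_k \in K$. Then $f^h(t_k, w_k) = t_k^d f(w_k/t_k) \geq 0$, and passing to the limit yields $f^h(\tilde{u}) \geq 0$. Once this equivalence is in place, the interior/boundary dichotomy follows as in part (i), using that $\delta_K^h(\cdot)$ is continuous because the minimization domain $K^h \cap \{\|\tilde{x}\|_2 = 1,\, x_0 \geq 0\}$ is compact.

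The main obstacle is the reverse direction in part (ii): a point $\tilde{u}$ on $K^h$ with $u_0 = 0$ has no direct counterpart in $K$, so one must genuinely invoke the closed-at-$\infty$ hypothesis to produce the approximating sequence $w_k/t_k \in K$. Without this assumption, $K^h$ could contain ``stray'' points at infinity where $f^h$ is negative even though $f \geq 0$ on $K$, exactly as illustrated earlier for $V_\re^h(g)$. The handling of the inequality constraints $p_j^h \geq 0$ under positive rescaling is routine since $t_k > 0$, and is the only place where the semialgebraic setting differs nontrivially from the variety case.
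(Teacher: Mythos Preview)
Your proposal is correct and follows essentially the same approach as the paper, which simply refers back to Proposition~\ref{char-bdry:Pd(g)} and declares the argument ``almost the same.'' You have faithfully reproduced that argument---the closed-at-$\infty$ approximation to show $f\in P_d(K)\Rightarrow \delta_K^h(f)\ge 0$, the normalization of $(1,v)$ for the converse, and the compactness/continuity reasoning for the interior/boundary split---while correctly noting that the inequality constraints $p_j^h\ge 0$ are preserved under positive rescaling, which is the only new wrinkle in the semialgebraic case.
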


Using the above characterization, we can get the following result about $\pt P_d(K)$.

\begin{theorem} \label{thm:bdry-P(K)}
Let $K$ be given as above. Assume
at most $n-m$ inequality constraints are active at any nonzero point in $K^h$.
If either $K$ is compact or $K^h$ is closed at $\infty$,
then the boundary $\pt P_d(K)$ lies on the hypersurface
\[
\mc{E}_d(K):= \left\{ f \in \re[x]_{\leq d}: \,
\prod_{  \{i_1,\ldots, i_k\}  \subseteq [t], k\leq n-m }
\Delta(f, g_1, \ldots,  g_m, p_{i_1},  \ldots,  p_{i_k}) = 0
\right\}.
\]
\end{theorem}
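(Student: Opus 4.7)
The plan is to reduce the statement to the already-established discriminantal characterization by applying a Fritz-John argument at a boundary point, once one records exactly which inequality constraints are active there. First I would take $f \in \pt P_d(K)$ and invoke Proposition~\ref{char:bdrPd(K)}: in the compact case $\delta_K(f)=0$ is achieved at some $u \in K$ (then set $\tilde u = (1,u)/\|(1,u)\|_2$), and in the closed-at-$\infty$ case $\delta_K^h(f)=0$ is already attained at some $\tilde u \in K^h$ with $\|\tilde u\|_2=1$, $u_0\ge 0$. In either situation $\tilde u$ is a minimizer of $f^h$ on $K^h$ with $f^h(\tilde u)=0$.

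Next I would read off the active set. Let $I=\{i_1,\ldots,i_k\}\subseteq [t]$ be the indices of those inequality constraints $p_i^h$ that vanish at $\tilde u$; by hypothesis $k\le n-m$, so the discriminant $\Delta(f,g_1,\ldots,g_m,p_{i_1},\ldots,p_{i_k})$ is defined (the condition $m+k\le n$ of Theorem~\ref{thm-deg:dis-itsc} is satisfied). The remaining inequality constraints are strictly positive at $\tilde u$ and therefore play no role locally. Applying the Fritz-John conditions to the optimization problem of minimizing $f^h$ subject to the equalities $g_j^h=0$ and the locally-active inequalities $p_{i_\ell}^h=0$, I obtain nonzero multipliers $(\mu_0,\nu_1,\ldots,\nu_m,\lambda_1,\ldots,\lambda_k)$ with
\[
\mu_0 \nabla_{\tilde x} f^h(\tilde u) + \sum_{j=1}^m \nu_j \nabla_{\tilde x} g_j^h(\tilde u) + \sum_{\ell=1}^k \lambda_\ell \nabla_{\tilde x} p_{i_\ell}^h(\tilde u) = 0,
\]
which says that the columns of the Jacobian of $(f^h,g_1^h,\ldots,g_m^h,p_{i_1}^h,\ldots,p_{i_k}^h)$ at $\tilde u$ are linearly dependent.

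Combining this rank drop with the simultaneous vanishing $f^h(\tilde u)=g_1^h(\tilde u)=\cdots=g_m^h(\tilde u)=p_{i_1}^h(\tilde u)=\cdots=p_{i_k}^h(\tilde u)=0$ places $(f,g_1,\ldots,g_m,p_{i_1},\ldots,p_{i_k})$ inside the variety $W(d,d_1,\ldots,d_m,e_{i_1},\ldots,e_{i_k})$ of Section~\ref{sec:dis-sevral}, hence by relation \reff{dis:f0m} we get
\[
\Delta(f,g_1,\ldots,g_m,p_{i_1},\ldots,p_{i_k}) \,=\, 0.
\]
Since $I\subseteq [t]$ with $|I|\le n-m$ is one of the index sets appearing in the product defining $\mc E_d(K)$, the whole product vanishes and $f \in \mc E_d(K)$, which is the desired conclusion.

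The main technical point to be careful about is the role of the hypothesis that at most $n-m$ inequalities are active at any nonzero point of $K^h$: without it the Fritz-John condition might force us to consider a discriminant with too many arguments to be well-defined as a hypersurface (the $m+k\le n$ requirement of Theorem~\ref{thm-deg:dis-itsc} could fail). A secondary subtlety is handling the compact case versus the closed-at-$\infty$ case uniformly; in the compact case one works directly in $K$ with $u$, and the Fritz-John argument is the standard one in $\re^n$, whereas in the closed-at-$\infty$ case one must work with $f^h$ on the compact projective slice $K^h\cap\{x_0\ge 0, \|\tilde x\|_2=1\}$. Both reductions give the same rank-deficiency conclusion, and the remainder of the argument is identical.
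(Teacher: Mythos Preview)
Your proposal is correct and follows essentially the same route as the paper's proof: invoke Proposition~\ref{char:bdrPd(K)} to find a minimizer where $f^h$ vanishes, record the active inequality constraints (whose number is bounded by $n-m$ by hypothesis), apply the Fritz-John condition to obtain a rank drop of the Jacobian, and conclude via \reff{dis:f0m} that the corresponding discriminant vanishes. The only cosmetic difference is that the paper treats the closed-at-$\infty$ case first and then remarks that the compact case is similar, whereas you sketch both and note in your final paragraph that in the compact case one should work directly in $K$; either way the argument goes through.
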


\begin{proof}
Let $f(x) \in \pt P_d(K)$.
First assume $K^h$ is closed at infinity.
So there exists $0 \ne u\in K^h$ such that $f^h(u)=0$.
Let $\{i_1,\ldots, i_k\}$ be the index set of active inequality constraints
\[
p_{i_1}^h(u) = \cdots = p_{i_k}^h(u) = 0.
\]
By assumption, $k\leq n-m$.
Note that $u$ is a minimizer of $f^h$ on $K^h$.
By Fritz-John optimality condition (see Sec. 3.3.5 in \cite{Bsks}),
there exists $(\mu_0, \mu_1, \ldots, \mu_{m+k})\ne 0$ satisfying
\[
\baray{c}
\mu_0 \nabla_{\tilde{x}} f^h(u) +
\overset{m}{\underset{i=1}{\sum}} \mu_i \nabla_{\tilde{x}} g_i^h(u) +
\overset{k}{\underset{j=1}{\sum}} \mu_{m+j} \nabla_{\tilde{x}} p_{i_j}^h(u) = 0, \\
f^h(u) = g_1^h(u) = \cdots = g_m^h(u)=p_{i_1}^h(u)=\cdots=p_{i_k}^h(u)=0.
\earay
\]
So $u$ is a singular solution to the polynomial system
\[
f^h(\tilde{x})= g_1^h(\tilde{x}) =  \cdots = g_m^h(\tilde{x})=
p_{i_1}^h(\tilde{x})= \cdots = p_{i_k}^h(\tilde{x})=0.
\]
Hence, $\Delta(f, g_1,,  \ldots,  g_m, p_{i_1}, \ldots, p_{i_k}) =0$.

The proof is similar when $K$ is compact.
\end{proof}

\begin{figure}[htb]
\centering
\btab{cc}
\includegraphics[width=.45\textwidth]{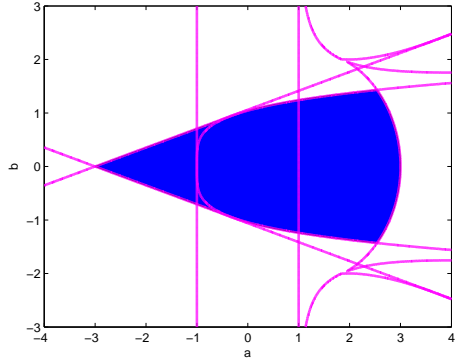}
\etab
\caption{ The picture of $\varphi(a,b)=0$
and the set $F$ in Example~\ref{ball-emp:v2d4}.}
\label{fig-ball:vr2dg4}
\end{figure}

\begin{exm} \label{ball-emp:v2d4}
Consider the polynomials parameterized as
\[
f_{a,b}(x)= x_1^4+x_2^4+a(x_1^3x_2+x_1x_2^3)+b(x_1+x_2)+1,
\]
and $K=\{1-x_1^2-x_2^2 \geq 0\}$ is a ball. From Theorem~\ref{thm:bdry-P(K)},
the boundary of $P_4(K)$ lies on the union of
$\Delta(f_{a,b})=0$ and $\Delta(f_{a,b},g)=0$.
The discriminant $q(a,b) = \Delta(f_{a,b})$ is
\[
2097152(a+1)^2(a-1)^3(a^2+8)^4(32+32a-27b^4)(256+32a^2+27b^4-27ab^4)^2.
\]
By the method used in subsection~\ref{subsec:dis-fg1m},
eliminating $(x,\lmd)$ in \reff{cond:KKT} gives $h(a,b)=0$ where $h(a,b)$ is
\[
\baray{c}
(a+2\sqrt{2}b+3)\cdot (a-2\sqrt{2}b+3)\cdot (a^5+a^3b^2-3a^4 \\
-30a^2b^2-27b^4+32a^3+48ab^2-96a^2+224b^2+256a-768).
\earay
\]
The curve $\Delta(f_{a,b},g)=0$ lies on $h(a,b)=0$.
Let $\varphi(a,b) = h(a,b) \cdot q(a,b)$.
The curves in Figure~\ref{fig-ball:vr2dg4} are defined by $\varphi(a,b)=0$. Let
\[
F = \left\{(a,b) \in \re^2:
\baray{c}
f_{a,b}(x) = \sig_0(x) + \sig_1(x) (1-\|x\|_2^2) \\
\sig_0(x), \sig_1(x) \mbox{ are SOS in } x  \\
\deg(\sig_0) = 4, \quad \deg(\sig_1) = 2
\earay
\right\}.
\]
It is clearly a convex set.
By the method used in Example~\ref{4exmp:psd-form},
$F$ is drawn in the shaded area of Figure~\ref{fig-ball:vr2dg4}.
Let $G=\{(a,b): f_{a,b} \in P_4(K)$. Clearly, $F \subset G$ and the boundary of $G$ lies on $\varphi(a,b)=0$.
Since the polynomials $f_{2,1.5},f_{2,-1.5},f_{4,0}$ are not nonnegative on the unit ball
(verified by {\it GloptiPoly~3} \cite{GloPol3}),
we know $(2,1.5), (2,-1.5), (4,0) \not\in G$.
From Figure~\ref{fig-ball:vr2dg4}, we can observe that $F$ is a maximal convex region
that excludes the pairs $(2,1.5), (2,-1.5), (4,0)$ and
has the boundary lying on $\varphi(a,b)=0$. So $F=G$.
\qed
\end{exm}

Now we discuss the barriers for $P_d(K)$.
The following is similar to Theorem~\ref{psd:nobarfun}.

\begin{theorem} \label{no-bar:intK}
If $K$ has nonempty interior, $d>2$ is even and $n\geq 1$,
then there is no polynomial $\varphi(f)$ satisfying
\bit

\item $\varphi(f)>0$ whenever $f$ lies in the interior of $P_d(K)$, and

\item $\varphi(f)=0$ whenever $f$ lies on the boundary of $P_d(K)$.

\eit
So, $-\log \varphi(f)$ can not be a barrier function for the cone
$P_d(K)$ when we require $\varphi(f)$ to be polynomial in $f$,
and $P_d(K)$ is not representable by LMI.
\end{theorem}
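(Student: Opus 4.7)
The plan is to argue by contradiction using a one-parameter family of test polynomials to force $\varphi$ to vanish at an interior point. After translation assume $0 \in int(K)$. The key family is $F_y(x) := \|x - y\|_2^d$, indexed by $y \in \re^n$; since $d$ is even, each $F_y$ is nonnegative on all of $\re^n$ with its unique zero at $x = y$, and its coefficients depend polynomially on $y$. If $y \in K$, then $F_y$ vanishes at the point $y \in K$, so $F_y \in \pt P_d(K)$ and therefore $\varphi(F_y) = 0$. If $y \notin K$ (with $K$ assumed closed without loss of generality), then $F_y \geq \text{dist}(y,K)^d > 0$ uniformly on $K$ and its leading part dominates any sufficiently small coefficient perturbation at infinity, so $F_y \in int(P_d(K))$ and $\varphi(F_y) > 0$.

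In the generic case $K \neq \re^n$, the map $y \mapsto \varphi(F_y)$ is a polynomial in $y$ (composition of the polynomial map $y \mapsto F_y$ with $\varphi$) and vanishes on the nonempty Euclidean-open set $int(K)$, hence vanishes identically. Choosing any $y_0 \in \re^n \setminus K$ then yields $\varphi(F_{y_0}) = 0$, contradicting $F_{y_0} \in int(P_d(K))$.

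In the remaining case $K = \re^n$, the preceding trick gives nothing, and I would instead mimic the proof of Theorem~\ref{psd:nobarfun}. Any $f \in \pt P_d(\re^n)$ attains the value zero at some $u \in \re^n$; since $f \geq 0$, $u$ is a local minimizer, $\nabla f(u) = 0$, and by Euler's formula \reff{fm:euler} the point $(1,u) \in \cpx^{n+1}$ is then a nonzero critical zero of $f^h$, forcing $\Delta(f) := \Delta(f^h) = 0$. Thus $\pt P_d(\re^n) \subseteq V(\Delta)$. Since $\pt P_d(\re^n)$ is the topological boundary of a full-dimensional convex cone in $\re[x]_{\leq d}$, it has real codimension one, so its Zariski closure has codimension one and, being contained in the irreducible hypersurface $V(\Delta)$, must equal $V(\Delta)$. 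Hilbert's Nullstellensatz (Theorem~\ref{HilNull}) then yields $\varphi^k = \Delta \cdot q$ for some polynomial $q$ and integer $k \geq 1$. Testing against $\hat f(x) := (1 + \|x\|_2^2)^{d/2}$, which is strictly positive and therefore lies in $int(P_d(\re^n))$, the homogenization $\hat f^h = (x_0^2 + x_1^2 + \cdots + x_n^2)^{d/2}$ has gradient $d(\tilde x^T\tilde x)^{d/2-1}\tilde x$ vanishing along the nonempty complex isotropic quadric $\tilde x^T\tilde x = 0$ (which has nonzero points once $n \geq 1$ and $d > 2$), so $\Delta(\hat f) = 0$ and consequently $\varphi(\hat f) = 0$, contradicting $\hat f \in int(P_d(\re^n))$.

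The LMI non-representability is then automatic: an LMI $L(f) \succeq 0$ with $L(f) \succ 0$ on $int(P_d(K))$ would make $\det L(f)$ a polynomial in the coefficients of $f$ satisfying both bullet points. I expect the main subtleties to be (i) in the generic case, checking that $F_y \in int(P_d(K))$ when $K$ is unbounded and $y \notin K$, which I would handle via a quantitative dominance bound such as $F_y(x) \geq \tfrac{1}{2^d}\|x\|^d$ for $\|x\| \geq 2\|y\|$ together with uniform positivity on compact subsets; and (ii) in the case $K = \re^n$, justifying that the real Zariski closure of $\pt P_d(\re^n)$ actually achieves the full codimension of the irreducible complex hypersurface $V(\Delta)$, which boils down to a real dimension count for the boundary of a full-dimensional convex body.
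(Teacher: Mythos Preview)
Your proof is correct, and for the generic case $K \neq \re^n$ it takes a genuinely different route from the paper. The paper argues uniformly for all $K$ with nonempty interior by showing that some piece of $\pt P_d(K)$ lies on the irreducible hypersurface $\Delta(f)=0$ (any $f\in\pt P_d(K)$ whose zero falls in $int(K)$ has a critical zero, hence $\Delta(f)=0$), then invokes the Nullstellensatz exactly as in Theorem~\ref{psd:nobarfun} to obtain $\varphi^k=\Delta\cdot p$ and tests against $\hat f=(1+\|x\|_2^2)^{d/2}$. Your argument for $K\neq\re^n$ sidesteps discriminants and the Nullstellensatz entirely: the one-parameter family $y\mapsto F_y=\|x-y\|_2^d$ turns the problem into the elementary fact that a polynomial in $y$ vanishing on the open set $int(K)$ is identically zero, and then a single exterior point $y_0\notin K$ yields the contradiction. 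This is more self-contained and, as a bonus, does not use the hypothesis $d>2$ in the case $K\neq\re^n$; the paper's uniform approach, on the other hand, makes explicit that the relevant algebraic boundary component is always the discriminantal hypersurface, tying the result back into the paper's main theme. For $K=\re^n$ your argument coincides with the paper's.

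Both subtleties you flag are genuine but routine. For (i), your dominance bound is exactly right: the leading homogeneous part of $F_y$ is $\|x\|_2^d$, which is positive definite, so any sufficiently small perturbation in $\re[x]_{\leq d}$ keeps the leading part positive definite and hence stays positive outside a compact ball, while on $K$ intersected with that ball $F_y$ is bounded below by $\mathrm{dist}(y,K)^d>0$. For (ii), the boundary of a full-dimensional closed convex cone in $\re[x]_{\leq d}$ is a real hypersurface, so its Zariski closure has codimension one and, being contained in the irreducible $\{\Delta=0\}$, must equal it; this is exactly how the paper reasons in Theorem~4.1.
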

\begin{proof}
Prove by contradiction. Suppose such a $\varphi$ exists.
Since $int(K) \ne \emptyset$, one piece of the boundary $\pt P_d(K)$ must lie
on the irreducible discriminantal hypersurface $\Delta(f)=0$.
The rest of the proof is then almost the same as for Theorem~\ref{psd:nobarfun},
and is omitted here.
\end{proof}

Typically there is no log-polynomial type barrier for the cone $P_d(K)$.
However, $P_d(K)$ has log-semialgebraic type barriers.
When $K$ is compact, $- \log \, \delta_K(f)$, or when $K^h$ is closed at $\infty$,
$- \log \, \delta_K^h(f)$, is a convex barrier for $P_d(K)$,
because both $\delta_K(f)$ and $\delta_K^h(f)$ are semialgebraic,
positive in $int(P_d(K))$, zero on $\pt P_d(K)$, and concave in $f$.
Generally, it is quite difficult to compute $\delta_K(f)$ or $\delta_K^h(f)$
for general $f$ and $K$.
So these two barriers are not very useful in practice.

\subsection{Co-positive polynomials and matrices}

A form $f(x)$ is said to be co-positive if $f(x) \geq 0$ for every $x \in \re_+^n$.
Clearly, $f(x)$ is co-positive if and only if its associated even form
\[
q_f(x) \quad = \quad f(x_1^2,\ldots,x_n^2)
\]
is nonnegative in $\re^{n}$.
A symmetric matrix $A$ is called co-positive if the associated
quadratic form $f(x)=x^TAx$ is co-positive.

Let $\mc{C}_{n,d}$ be the cone of copositive forms in $\re[x]_d$,
and $\pt \mc{C}_{n,d}$ be its boundary.
Clearly, if $f \in \pt \mc{C}_{n,d}$, then there exists $0 \ne u\in \re_+^n$ such that $f(u)=0$,
or equivalently $q_f(\sqrt{u})=0$.
Thus $\pt \mc{C}_{n,d}$ lies on the discriminantal hypersurface $\Delta(q_f)= 0$.

\begin{prop}  \label{pro:bdry-copos}
The Zariski closure of $\pt \mc{C}_{n,d}$ is the hypersurface
\[
\mc{E}_d(\re_+^n):= \left\{f \in \re[x]_d: \,
\prod_{ \emptyset \ne I \subseteq [n] } \Delta (f_I(x_I)) = 0 \right\}.
\]
Here $x_I = (x_i: i\in I)$ and $f_I$ is obtained from $f(x)$ by setting $x_j=0$ for $j\not\in I$.
\end{prop}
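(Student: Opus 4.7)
The plan is to establish the two inclusions separately. \emph{Forward direction.} Suppose $f \in \pt \mc{C}_{n,d}$ and pick a nonzero $u \in \re_+^n$ with $f(u)=0$. Let $I=\{i\in[n]:u_i>0\}$ and denote by $u_I$ its nonzero coordinates. Since $u_I$ lies in the relative interior $\re_{>0}^I$ and minimizes $f_I(x_I)$ on $\re_+^I$ with value $0$, the first-order condition gives $\nabla_{x_I} f_I(u_I)=0$, so $u_I \ne 0$ is a critical zero of the form $f_I$ and therefore $\Delta(f_I)=0$. This shows $\pt \mc{C}_{n,d}\subseteq \bigcup_I V(\Delta(f_I))=\mc{E}_d(\re_+^n)$, and hence $Zar(\pt \mc{C}_{n,d})\subseteq \mc{E}_d(\re_+^n)$.

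\emph{Reverse direction via irreducibility.} The discriminant $\Delta(f_I)$ is irreducible in the coefficients of $f_I$, and because it depends only on the coefficients of $f$ indexed by monomials $x^{\af}$ with $\mbox{supp}(\af)\subseteq I$, it remains irreducible as a polynomial in the full coefficient vector of $f$. Thus $\mc{E}_d(\re_+^n)$ decomposes as the union of the finitely many irreducible hypersurfaces $V(\Delta(f_I))$, and to prove the reverse inclusion it suffices to show each $V(\Delta(f_I))$ is contained in $Zar(\pt \mc{C}_{n,d})$.

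\emph{Constructing the family.} For each fixed nonempty $I$, I would exhibit a real-parametric family of copositive forms lying in $\pt \mc{C}_{n,d}\cap V(\Delta(f_I))$ whose real dimension equals the complex dimension of $V(\Delta(f_I))$, forcing Zariski density by irreducibility. Concretely, pick any $u\in\re_{>0}^I$ and any linear form $\ell(x_I)$ with $\ell(u)=0$, and set $g(x_I)=h(x_I)\cdot\ell(x_I)^2$ for a copositive degree-$(d-2)$ form $h\in\re[x_I]_{d-2}$; then $g(u)=0$, $\nabla g(u)=0$, and $g$ is copositive on $\re_+^I$. Extend $g$ to a form $f\in\re[x]_d$ by adding $\sum_{j\notin I}x_j\,p_j(x)$ with $p_j\in\re[x]_{d-1}$ copositive and chosen large enough that $f$ remains copositive on $\re_+^n$. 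The extended $f$ satisfies $f_I=g$, vanishes at the point having coordinates $u$ on $I$ and zero elsewhere, and hence lies in $\pt \mc{C}_{n,d}\cap V(\Delta(f_I))$.

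\emph{Main obstacle.} The hardest step will be verifying that this parametric family is Zariski dense in $V(\Delta(f_I))$. The construction sweeps out a real subset parametrized by $(u,\ell,h,p_j)$, but one must check that its image is not trapped in a proper subvariety of the irreducible variety $V(\Delta(f_I))$. This requires a careful dimension count together with a generic-smoothness argument on the parametrization; the semialgebraic nature of copositivity adds subtlety, since the parameters must stay inside open subsets of the relevant copositive cones while still providing enough algebraic freedom to fill out the discriminantal hypersurface.
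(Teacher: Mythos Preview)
Your forward direction is correct and matches the paper exactly. The overall strategy for the reverse direction --- reduce to showing each irreducible component $V(\Delta(f_I))$ lies in $Zar(\pt\mc{C}_{n,d})$, then exhibit enough boundary points in it --- is also the right one.

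The gap is your concrete family. Forms of the shape $g=h\cdot\ell^2$ are far too special to be Zariski dense in the discriminantal hypersurface. Take $|I|=3$, $d=4$: the hypersurface $\{\Delta(f_I)=0\}$ in $\re[x_I]_4$ has dimension $\binom{6}{4}-1=14$, whereas $\{h\ell^2:\deg h=2,\ \deg\ell=1\}$ has dimension at most $6+3-1=8$. Your free extension coefficients $p_j$ do not rescue this, because $V(\Delta(f_I))$ in $\re[x]_d$ is a cylinder over that $14$-dimensional base, and your family is a cylinder over the $8$-dimensional piece; you are still six dimensions short. So the ``main obstacle'' you flag is not a subtlety to be overcome by a finer count --- the family is simply too small, and no generic-smoothness argument on this parametrization can close the gap.

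The paper bypasses explicit parametrization entirely. The observation you are missing is that $\mc{C}_{|I|,d}$ is a \emph{full-dimensional convex cone} in $\re[x_I]_d$, so its boundary $\pt\mc{C}_{|I|,d}$ automatically has real codimension one. Pick any $\hat f_I\in\pt\mc{C}_{|I|,d}$ whose unique zero on the simplex lies at an interior point of the orthant, say $\mathbf{1}_I$. Then every nearby $g_I\in\pt\mc{C}_{|I|,d}$ still has its zero in the open orthant, hence $\Delta(g_I)=0$. This Euclidean-open patch of $\pt\mc{C}_{|I|,d}$ already has real dimension $\dim\re[x_I]_d-1$, so by irreducibility its Zariski closure is all of $\{\Delta(f_I)=0\}$. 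Since each such $g_I$, viewed in $\re[x]_d$, is copositive on $\re_+^n$ and vanishes at $(\mathbf{1}_I,0)$, it lies in $\pt\mc{C}_{n,d}$; together with the cylinder structure of $V(\Delta(f_I))$ over the complementary coefficients this yields $V(\Delta(f_I))\subseteq Zar(\pt\mc{C}_{n,d})$. In short: replace your hand-built family $h\ell^2$ by an arbitrary open piece of $\pt\mc{C}_{|I|,d}$, and let convexity do the dimension count for you.
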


\begin{proof}
Let $f\in \pt \mc{C}_{n,d}$. Then there exists $0 \ne u\in \re_+^n$ such that $f(u)=0$.
The index set $I=\{i: u_i >0\}  \subseteq [n]$ is nonempty,
and $f_I(x_I)$ has a positive critical zero point,
because $\nabla_{x_I} f_I(u_I)=0$. So $\Delta (f_I(x_I)) = 0$.
Hence, we have $Zar(\pt \mc{C}_{n,d}) \subseteq \mc{E}_d(\re_+^n)$.
To prove they are equal, we need to show that
$\Delta (f_I(x_I)) = 0$ lies on $Zar (\mc{C}_{n,d})$ for every
$\emptyset \ne I \subseteq [n]$. Fix such an arbitrary $I$. Let $\hat{f}_I(x_I)$
be a co-positive form which vanishes at $\mathbf{1}_I$ ($\mathbf{1}$ is the vector of all ones).
Then there is a neighborhood $\mc{U}$ of $\hat{f}_I$ such that
every $g_I \in \mc{U} \cap \mc{C}_{|I|,d}$ vanishes somewhere near $\mathbf{1}_I$.
Thus $\mc{U} \cap \mc{C}_{|I|,d} \subset \{\Delta (f_I(x_I)) = 0\}$, and
\[
Zar(\mc{U} \cap \mc{C}_{|I|,d}) \subseteq Zar(\{\Delta (f_I(x_I)) = 0\})= \{\Delta (f_I(x_I)) = 0\}.
\]
Since the hypersurface $\Delta (f_I(x_I)) = 0$ is irreducible, we must have
\[
\{\Delta (f_I(x_I)) = 0\} \subseteq Zar (\mc{U} \cap \mc{C}_{|I|,d}) \subseteq Zar (\mc{C}_{n,d}).
\]
The above is true for every $\emptyset \ne I \subset [n]$.
So $Zar (\mc{C}_{n,d}) = \mc{E}_d(\re_+^n)$.
\end{proof}

Proposition~\ref{pro:bdry-copos} is equivalent to the fact that
\[
\Delta(q_f)= 0  \, \Longleftrightarrow \,   \prod_{ \emptyset \ne I \subseteq [n] } \Delta (f_I) =0.
\]
This is because $\nabla_x (q_f(x)) = 2 \diag(x) \cdot \nabla_x f(x^2)$ and
\begin{align*}
\Delta(q_f) = 0
& \Longleftrightarrow   Res\left(x_1 \frac{\pt f}{\pt x_1}(x^2), \ldots, x_n\frac{\pt f}{\pt x_n}(x^2)\right) =0 \\
& \Longleftrightarrow   Res\left(x_1 \frac{\pt f}{\pt x_1}(x), \ldots, x_n\frac{\pt f}{\pt x_n}(x)\right) =0 \\
& \Longleftrightarrow  \prod_{ \emptyset \ne I \subseteq [n] } \Delta(f_I) =0.
\end{align*}
We refer to Theorem~1.2 in \cite[Chapt.10]{GKZ} for the last equivalence in the above.
If $[n]\backslash I = \{i_1,\ldots,i_k\}$, \reff{copoly:dis} implies
$\Delta(f_I(x_I)) = \eta \Delta(f,x_{i_1},\ldots,x_{i_k})$ for some $\eta \ne 0$.
In particular, if $d=2$ and $f(x)=x^TAx$ is quadratic,
then Proposition~\ref{pro:bdry-copos} and \reff{quad:dis=det}
imply $Zar(\pt \mc{C}_{n,2})$ is the hypersurface
\be \label{defpoly:copos}
\prod_{ \emptyset \ne I \subseteq [n] } \det A(I,I) = 0.
\ee

\begin{cor} \label{no-bar:copos}
Suppose $d\geq 2$ and $n\geq 2$. Then there is no polynomial $\varphi(f)$ satisfying
\bit

\item $\varphi(f)>0$ whenever $f$ is in the interior of $\mc{C}_{n,d}$, and

\item $\varphi(f)=0$ whenever $f$ is on the boundary of $\mc{C}_{n,d}$.

\eit
So, $-\log \varphi(f)$ can not be a barrier function for the cone
$\mc{C}_{n,d}$ when we require $\varphi(f)$ to be polynomial in $f$,
and $\mc{C}_{n,d}$ is not representable by LMI.
\end{cor}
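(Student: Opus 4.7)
The plan is to mimic the contradiction argument used in Theorem~\ref{psd:nobarfun} and Theorem~\ref{no-bar:intK}, with Proposition~\ref{pro:bdry-copos} playing the role that the irreducibility of $\Delta(f)$ played there. Suppose for contradiction that such a polynomial $\varphi(f)$ exists. Since $\varphi$ vanishes on $\pt \mc{C}_{n,d}$, it also vanishes on the Zariski closure $Zar(\pt \mc{C}_{n,d})$. By Proposition~\ref{pro:bdry-copos}, this Zariski closure is $\mc{E}_d(\re_+^n)$, which in particular contains the irreducible hypersurface $\{\Delta(f)=0\}$ (the factor corresponding to $I=[n]$). Hence $\varphi$ vanishes on $\{\Delta(f)=0\}$, and Hilbert's Nullstellensatz (Theorem~\ref{HilNull}) produces an integer $k>0$ and a polynomial $p(f)$ with
\[
\varphi(f)^k \, = \, \Delta(f) \cdot p(f).
\]

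Next I would exhibit a single form $\hat f \in int(\mc{C}_{n,d})$ with $\Delta(\hat f)=0$, which immediately contradicts $\varphi(\hat f)>0$. A convenient choice is $\hat f(x) = (x_1 + \cdots + x_n)^d$: it is strictly positive on $\re_+^n\setminus\{0\}$, so $\hat f\in int(\mc{C}_{n,d})$, yet $\nabla \hat f(x) = d(x_1+\cdots+x_n)^{d-1}(1,\ldots,1)$ vanishes on the nontrivial complex hyperplane $x_1+\cdots+x_n=0$ (which is nonempty in $\cpx^n\setminus\{0\}$ precisely because $n\geq 2$), so $\Delta(\hat f)=0$. This yields $\varphi(\hat f)^k = \Delta(\hat f)\cdot p(\hat f)=0$, contradicting the first item.

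For the second part, the non-existence of a $-\log$-polynomial barrier is immediate from the first part. For the non-existence of an LMI representation, I would argue exactly as in Theorem~\ref{psd:nobarfun}: if $\mc{C}_{n,d}=\{f: L(f)\succeq 0\}$ with $L(f)\succ 0$ on $int(\mc{C}_{n,d})$, then $\det L(f)$ would be a polynomial in the coefficients of $f$ that is strictly positive on $int(\mc{C}_{n,d})$ and zero on $\pt\mc{C}_{n,d}$, contradicting the first part.

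The only point that needs care is verifying that $\hat f(x)=(x_1+\cdots+x_n)^d$ actually lies in the \emph{interior} of $\mc{C}_{n,d}$, not merely its boundary; this is where the hypothesis $n\geq 2$ (and $d\geq 2$, ensuring the complex critical variety of $\hat f$ is nontrivial) is used. Everything else is bookkeeping built on Proposition~\ref{pro:bdry-copos} and the Nullstellensatz, so I do not anticipate any serious obstacle beyond choosing $\hat f$ correctly.
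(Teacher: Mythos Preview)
Your proposal is correct and follows essentially the same argument as the paper's own proof: contradiction via Proposition~\ref{pro:bdry-copos}, then Nullstellensatz to factor $\varphi^k$ through $\Delta(f)$, then the same counterexample $\hat f(x)=(x_1+\cdots+x_n)^d=(\mathbf 1_n^Tx)^d$. One minor quibble: the hypothesis $n\geq 2$ is needed to guarantee $\Delta(\hat f)=0$ (so that the hyperplane $x_1+\cdots+x_n=0$ has a nonzero point in $\cpx^n$), not to place $\hat f$ in the interior of $\mc{C}_{n,d}$---the latter holds for any $n\geq 1$ since $\hat f$ is strictly positive on $\re_+^n\setminus\{0\}$.
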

\begin{proof}
Prove the first part by contradiction. Suppose such a $\varphi(f)$ exists. Then
\[
\varphi(f) = 0 \quad \forall f \in \pt \mc{C}_{n,d}.
\]
So the Zariski closure of $\pt \mc{C}_{n,d}$ lies on the hypersurface $\varphi(f) = 0$.
Since $d\geq 2$, $\Delta(f)$ is an irreducible polynomial in $f$.
By Proposition~\ref{pro:bdry-copos}, the hypersurface $\Delta(f)=0$
lies on $\varphi(f)=0$, and $\varphi(f)$ vanishes on $\Delta(f)=0$.
By Hilbert Nullstellensatz (see Theorem\ref{HilNull}), there exist a positive integer $k>0$
and a polynomial $\phi(f)$ such that
\[
\varphi(f)^k = \phi(f) \Delta(f).
\]
In particular, if we choose $f$ to be $\hat{f}(x) =(\mathbf{1}_{n}^Tx)^d$ in the above, then
\[
\varphi(\hat{f})^k = \phi(\hat{f}) \Delta(\hat{f}) = 0.
\]
This is because the form $\hat{f}(x)$ has a nonzero critical point when $d\geq 2$ and $n\geq 2$.
However, $\hat{f}(x)$ clearly lies in the interior of $\mc{C}_{n,d}$,
which contradicts the first item.

The second part clearly follows the first part.
\end{proof}

\noindent
{\it Remark:} Corollary~\ref{no-bar:copos} would be implied by Theorem~\ref{no-bar:intK}
for the case that $d>2$ is even.

\begin{exm} \label{2exmp:copos}
(i) Consider the symmetric matrices $A$ parameterized as
\[
A =
\left[
\baray{rrrr}
1 & a & -b & b \\
 a & 1 & -b & -a \\
-b & -b & 1 & -a \\
 b & -a & -a & 1
\earay\right].
\]
We are interested in the set of all pairs $(a,b)$
such that $A$ is co-positive.
The polynomial $\varphi(a,b)$ defining equation \reff{defpoly:copos} is
\[
\baray{c}
- {\left(a - 1\right)}^5\cdot {\left(a + 1\right)}^3\cdot {\left(b - 1\right)}^3\cdot
 {\left(b + 1\right)}^5\cdot {\left( - 2 b^2 + a + 1\right)}^2\cdot
 {\left(2a^2 + b - 1\right)}^2 \cdot \\
  \left(a^2 + 3 a b + a + b^2 - b - 1\right)\cdot
  \left( -a^2 + a b + a - b^2 - b + 1\right).
\earay
\]
The curve $\varphi(a,b)=0$ is drawn in the left picture of Figure~\ref{fig:copos}. Let
\[
F = \left\{(a,b) \in \re^2:  A = X + Y, X \succeq 0, Y \geq 0 \right\}.
\]
By the method used in Example~\ref{4exmp:psd-form},
$F$ is drawn in the shaded area of
the left picture in Figure~\ref{fig:copos}.
Because every co-positive $4\times 4$ matrix is a sum
of a nonnegative matrix and a positive semidefinite matrix (see \cite{Dian}),
we know $F=\{(a,b): A \in \mc{C}_{4,2}\}$.

\begin{figure}[htb]
\centering
\btab{cc}
\includegraphics[height=.3\textwidth]{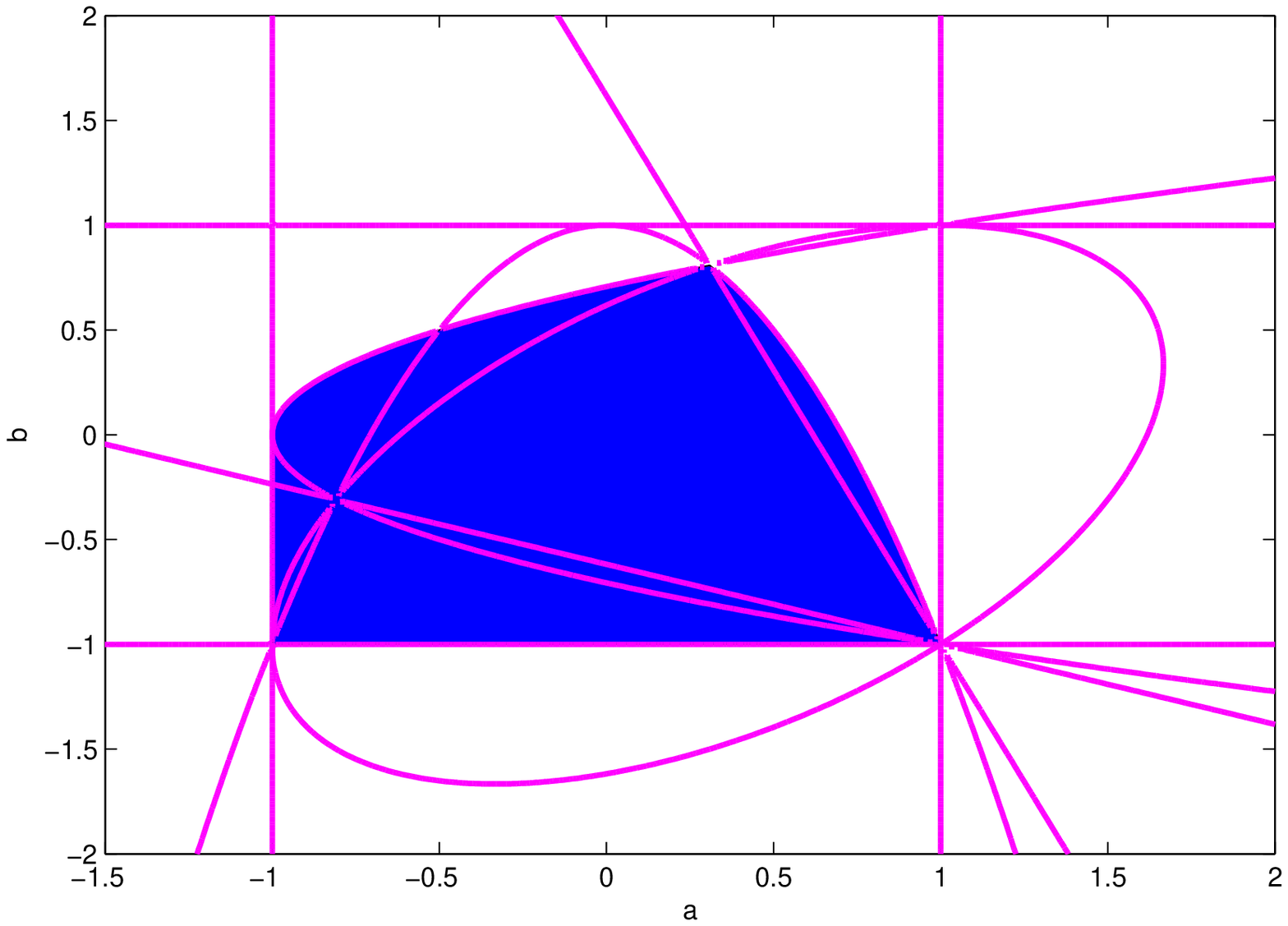} &
\includegraphics[height=.3\textwidth]{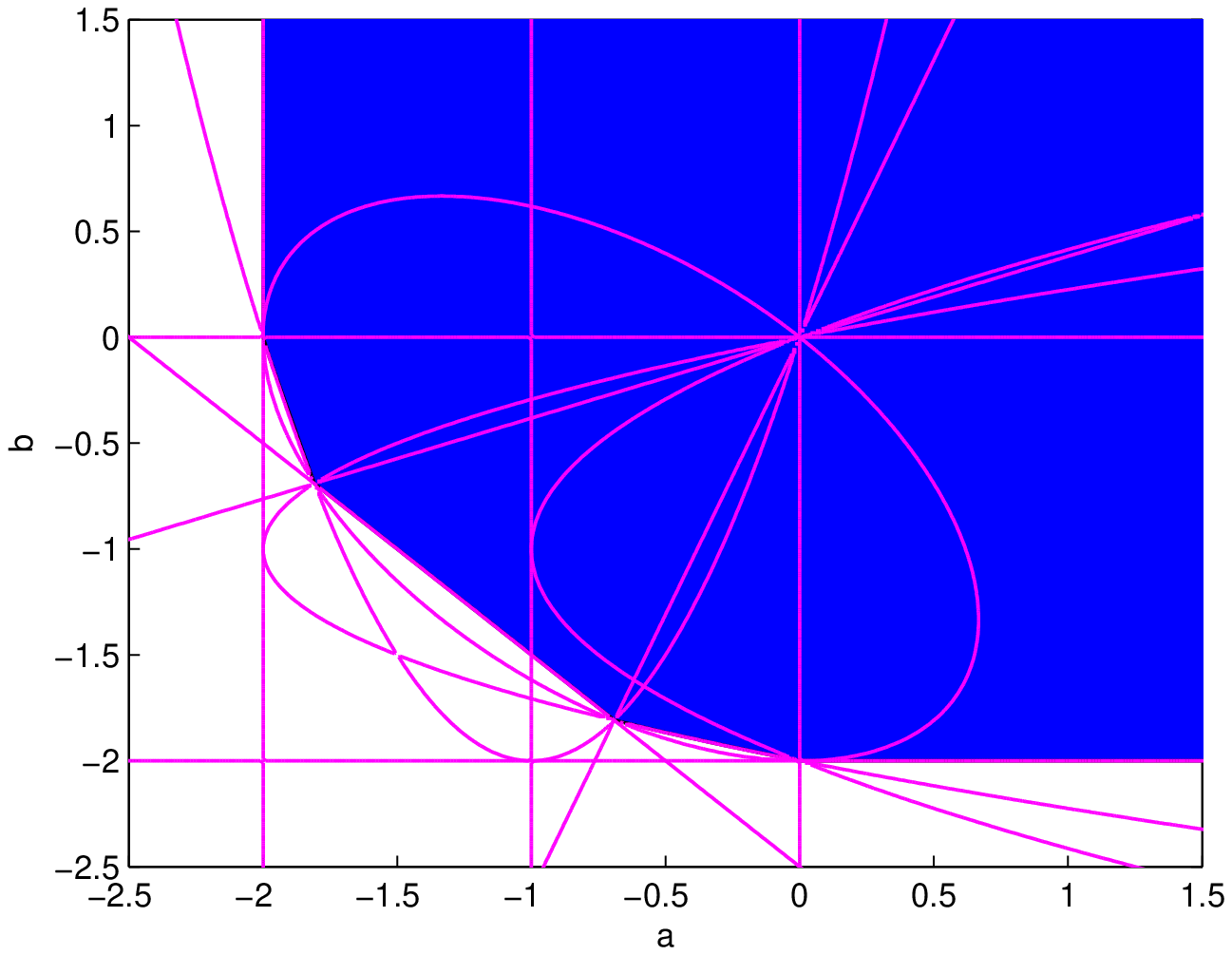}
\etab
\caption{ The pictures of the curve $\varphi(a,b)=0$
and region $F$ for co-positive matrices in Example~\ref{2exmp:copos}.
The left is for (i), and the right for (ii).}
\label{fig:copos}
\end{figure}

\smallskip
\noindent
(ii) Consider the symmetric matrices $A$ parameterized as
\[
A =
\left[\baray{lllll}
1   & 1+a  & 1+b & 1+b & 1+a \\
1+a & 1    & 1+a & 1+b & 1+b \\
1+b & 1+a  & 1   & 1+a & 1+b \\
1+b & 1+b  & 1+a & 1   & 1+a \\
1+a & 1+b  & 1+b & 1+a &  1 \\
\earay\right].
\]
When $a=-2,b=0$, it is the matrix associated to the Horn's copositive form (see Reznick \cite{Rez00}).
The polynomial $\varphi(a,b)$ defining equation \reff{defpoly:copos} is
\[
\baray{c}
a^9\cdot b^{10}\cdot (a + 1)\cdot {(a + 2)}^4\cdot {(b + 2)}^4\cdot
 {(2a^2 + 4a - b)}^5{(2b^2 + 4b - a)}^5\cdot \\
 {(a^2 - 3ab + b^2)}^7\cdot
 (b^2 + 2b - a)(2a + 2b + 5)\cdot
 {(a^2 + ab + 2a + b^2 + 2b)}^5.
\earay
\]
The curves in the right picture of Figure~\ref{fig:copos} are defined by $\varphi(a,b)=0$. Let
\[
F = \left\{(a,b) \in \re^2:  \|x\|_2^2 \cdot
\left(\sum_{1\leq i,j\leq 5} A_{i,j}x_i^2x_j^2\right) \mbox{ is SOS in } x \right\}.
\]
It is an unbounded convex set. By the method used in Example~\ref{4exmp:psd-form},
$F$ is drawn in the shaded area of the right picture in Figure~\ref{fig:copos}.
Let $G=\{(a,b): A \in \mc{C}_{5,2}\}$.
Clearly, $F\subset G$ and the boundary of $G$ lies on $\varphi(a,b)=0$.
Then $f_{a,b}(1,1,0,0,0) \geq 0$,
$f_{a,b}(1,0,1,0,0) \geq 0$, and $f_{a,b}(1,1,1,1,1) \geq 0$ imply that
any pair $(a,b) \in G$ satisfies
\[
2a+2b+5 \geq 0, \quad a+2 \geq 0, \quad b+2 \geq 0.
\]
Since $(-0.5,-1.88),(-1.88,-0.5), (-1.3,-1.3) \not\in G$
(verified by {\it GloptiPoly~3} \cite{GloPol3}),
from the right picture in Figure~\ref{fig:copos},
we can observe that $F$ is a maximal convex region that
satisfies the above three linear constraints, excludes
the previous $3$ pairs and has the boundary lying on $\varphi(a,b)=0$.
So $F=G$.
\qed
\end{exm}

\section{Conclusions and discussions}  \label{sec:con-dis}
\setcounter{equation}{0}

This paper studies the algebraic geometric properties of the boundary $\pt P_d(K)$.
When $K=\re^n$, $\pt P_d(K)$ lies on an irreducible hypersurface defined
by the discriminant of a single polynomial;
when $K$ is a real algebraic variety, the boundary $\pt P_d(K)$ lies on a hypersurface defined
by the discriminant of several polynomials;
when $K$ is a general semialgebraic set, the boundary $\pt P_d(K)$ lies on
a union of discriminantal hypersurfaces.
General degree formulae for these hypersurfaces and discriminants are also proved.
An interesting consequence of these results is that
$-\log \varphi(f)$ can not be a barrier for the cone $P_d(K)$
when $\varphi(f)$ is required to be polynomial in $f$,
but it would be a barrier if $\varphi(f)$ is allowed to be semialgebraic.

Given general multivariate polynomials $f_0,\ldots,f_m$,
how to compute the discriminant of type $\Delta(f_0,\ldots,f_m)$?
When $m=0$, there are standard procedures for computing $\Delta(f_0)$.
However, to the best of the author's knowledge, this question is open for $m>0$.
In computing $\Delta(f)$ for a single polynomial $f$,
it is typically non-practical to get a general formula for $\Delta(f)$,
but if $f(x)$ has a few terms and its coefficients have a few parameters,
is there any practical method for evaluating $\Delta(f)$ efficiently?
These questions are interesting future work.

%
%

\bigskip\noindent
{\bf Acknowledgement} \,
The author would like very much to thank Bill Helton, Kristian Ranestad,
Jim Renegar and Bernd Sturmfels
for fruitful suggestions on improving this paper.

\end{document}